\documentclass[12pt,a4paper]{amsart}

\usepackage{amsfonts,amssymb,enumerate,bm,hhline,makecell,array,mathtools}
\usepackage{mathrsfs}
\usepackage{comment}
\usepackage[normalem]{ulem}
\usepackage[colorlinks=true,citecolor=blue!70!black, urlcolor=blue!70!black, pagebackref, linkcolor=blue!70!black]{hyperref}
\usepackage{tikz-cd}
\usepackage{tikz}
\usetikzlibrary{arrows.meta}
\usepackage{amscd}
\usepackage{stmaryrd}
\usepackage[shortlabels]{enumitem}
\setlist[itemize]{noitemsep,nolistsep}
\usepackage{tensor}
\usepackage{tikz}
\usetikzlibrary{matrix,arrows}
\usepackage{extarrows}
\usepackage[toc,page]{appendix}
\usepackage[all,ps,cmtip,rotate]{xy} 
\usepackage{color}
\usepackage{faktor}
\usepackage{hyperref}
\usepackage[capitalize, noabbrev]{cleveref}
\hypersetup{
    colorlinks,
    citecolor = blue!50!black,
    linkcolor=blue!50!black,
    urlcolor=black
}

\makeatletter
\def\@tocline#1#2#3#4#5#6#7{\relax
  \ifnum #1>\c@tocdepth 
  \else
    \par \addpenalty\@secpenalty\addvspace{#2}%
    \begingroup \hyphenpenalty\@M
    \@ifempty{#4}{%
      \@tempdima\csname r@tocindent\number#1\endcsname\relax
    }{%
      \@tempdima#4\relax
    }%
    \parindent\z@ \leftskip#3\relax \advance\leftskip\@tempdima\relax
    \rightskip\@pnumwidth plus4em \parfillskip-\@pnumwidth
    #5\leavevmode\hskip-\@tempdima
      \ifcase #1
       \or\or \hskip 1em \or \hskip 2em \else \hskip 3em \fi%
      #6\nobreak\relax
    \dotfill\hbox to\@pnumwidth{\@tocpagenum{#7}}\par
    \nobreak
    \endgroup
  \fi}
\makeatother

\usepackage{enumitem}
\setlist[itemize]{noitemsep,nolistsep}
\setlist[enumerate]{noitemsep,nolistsep}
\usepackage{colortbl} 
\usepackage{leftidx}

\allowdisplaybreaks

\def\Z{{\bf Z}}
\def\Zsheaf{{\underline{\bf Z}}}

\def\C{{\bf C}}

\def\Q{{\bf Q}}
\def\P{{\bf P}}

\def\hk{hyper-K\"ahler}

\def\hkm{hyper-K\"ahler manifold}

\def\cE{\mathcal{E}}
\def\cF{\mathcal{F}}

\def\cM{\mathcal{M}}

\def\cO{\mathcal{O}}

\def\-{\textup{-}}

\def\lra{\longrightarrow}
\def\llra{\hbox to 10mm{\rightarrowfill}}
\def\lllra{\hbox to 15mm{\rightarrowfill}}

\def\llla{\hbox to 10mm{\leftarrowfill}}
\def\lllla{\hbox to 15mm{\leftarrowfill}}

\newcommand\vsim{\rotatebox[origin=cc] {-90}{$\sim$}}
\newcommand\simHalf{\rotatebox[origin=cc] {-45}{$\sim$}}

\DeclareMathOperator{\Aut}{Aut}

\DeclareMathOperator{\Br}{Br}

\DeclareMathOperator{\Coker}{Coker}
\DeclareMathOperator{\coker}{Coker}

\DeclareMathOperator{\Ext}{Ext}
\DeclareMathOperator{\Exc}{Exc}

\DeclareMathOperator{\Hilb}{Hilb}

\DeclareMathOperator{\Id}{Id}
\DeclareMathOperator{\id}{Id}
\def\Im{\mathop{\rm Im}\nolimits}
\def\im{\mathop{\rm Im}\nolimits}

\DeclareMathOperator{\Ker}{Ker}

\DeclareMathOperator{\Kum}{Kum}

\DeclareMathOperator{\Mod}{Mod}

\DeclareMathOperator{\NS}{NS}

\DeclareMathOperator{\Pic}{Pic}

\DeclareMathOperator{\pr}{\mathsf{pr}}
\DeclareMathOperator{\Proj}{Proj}

\DeclareMathOperator{\Sh}{Sh}

\DeclareMathOperator{\tors}{tors}

\def\llra{\hbox to 10mm{\rightarrowfill}}
\def\lllra{\hbox to 15mm{\rightarrowfill}}

\def\bw#1#2{\textstyle{\bigwedge\hskip-0.9mm^{#1}}\hskip0.2mm{#2}}

\def\subset{\subseteq}

\newtheorem{lem}{Lemma}[section]

\newtheorem{thm}[lem]{Theorem}
\newtheorem{cor}[lem]{Corollary}
\newtheorem{prop}[lem]{Proposition}

\newtheorem*{thm*}{Theorem}

\theoremstyle{definition}
\newtheorem{defin}[lem]{Definition}
\newtheorem{rem}[lem]{Remark}

\theoremstyle{remark}
\newtheorem*{remark*}{Remark}
\newtheorem*{note*}{Note}

\def\Lkkk[#1]{{\Lambda_{\KKK^{[#1]}}}}
\def\kkk[#1]{{\KKK^{[#1]}}}
\DeclareMathOperator{\KKK}{{K3}}

\def\sss[#1]{{S^{[#1]}}}

\definecolor{orange}{rgb}{1,0.55,0}
\definecolor{purple}{RGB}{148,55,255}
\definecolor{myblue}{RGB}{4,51,255}

\title{On Brauer groups of known Enriques manifolds}
\author[A.\ Frassineti]{Alessandro Frassineti}
\address{Università di Genova, Via Dodecaneso 35, 16146 Genova, Italy}
\email{alessandro.frassineti@edu.unige.it}

\author[F.\ Rizzo]{Francesca Rizzo}
\address{Université Paris Cité and Sorbonne Université, CNRS, IMJ-PRG, F-75013 Paris, France}
\email{francesca.rizzo@imj-prg.fr}

\author[F.\ Tufo]{Federico Tufo}
\address{Università di Bologna, Piazza di Porta San Donato 5, Bologna, Italy}
\email{federico.tufo2@unibo.it}

\author[M.\ Verni]{Matteo Verni}
\address{Sorbonne Université, IMJ-PRG, 4 place Jussieu, 75005 Paris, France}
\email{verni@imj-prg.fr}
\thanks{This project has received funding from the European
Research Council (ERC) under the European
Union's Horizon 2020 research and innovation
programme (ERC-2020-SyG-854361-HyperK) and from INdAM - GNSAGA Project, Varietà di Fano e hyperkahler: costruzioni, classificazioni e collegamenti (CUP E53C24001950001). }

\begin{document}
\begin{abstract}
We compute the Brauer group of some of the known Enriques manifolds. We then build special Brauer--Severi varieties on these manifolds and study the pull-back map from the Brauer group of an Enriques manifold to that of its hyper-K\"ahler universal cover, from both a geometric and an algebraic perspective.
\end{abstract}
\maketitle
\tableofcontents

\section{Introduction}

An Enriques surface is a smooth projective surface obtained as the quotient of a K3 surface by an involution without fixed points. These surfaces occupy a prominent position in the Enriques-Kodaira classification of algebraic surfaces as they have zero Kodaira dimension and trivial first Betti number but non-trivial canonical bundle.

The geometry of Enriques surfaces is deeply intertwined with that of K3 surfaces.
For this reason, the flourishing study of \hk \ geometry opened the way to extend the rich theory of Enriques surfaces to higher dimension, at least over the complex numbers.

In what follows we adopt the definition of \cite{OguisoSchroer2011}, which has been extended to the mildly singular setting in \cite{DRTX26}.
Another possible definition has been given by Boissière, Nieper--Wisskirchen, Sarti in \cite{BNWS10-DefEnriques}.

\begin{defin}[\cite{OguisoSchroer2011}]\label[defin]{OSdef}
    An \textit{Enriques manifold} is a connected complex manifold \(T\) with non-trivial fundamental group whose universal cover is a compact \hk \ manifold \(X\). 
    We call \textit{index} of \(T\) the degree of the universal cover \(\pi \colon X\to T\).
\end{defin}
One can show (see for example \cite[Section 2]{OguisoSchroer2011}) that $\pi_1(T)$ is a cyclic group whose action on $X$ is purely non-symplectic. In particular, all Enriques manifolds are projective. Moreover, the index \(d=|\pi_1(T)|\) must divide $\dim(X)+1$, by multiplicativity of the holomorphic Euler characteristic.

\subsection{Known constructions}\label{subsec_known_constructions}

The following are all the known constructions of Enriques manifolds: they are obtained from \emph{natural} automorphisms on Hilbert schemes of points on a K3 surface, or on generalized Kummer varieties.

\begin{enumerate}[label =\rm (\alph*)]
    \item\label{item_E_n} (\cite[Proposition 4.1]{OguisoSchroer2011}) Let \(n\) be an odd positive integer and \(i\) be a fixed point free involution on a K3 surface \(S\). It induces an involution \(i^{[n]}\) on \(\text{Hilb}^n(S)\) which is itself fixed point free since \(n\) is odd. The quotient manifold \[E_n\coloneqq \faktor{\text{Hilb}^n(S)}{\langle i^{[n]}\rangle }\] is an Enriques manifold of dimension \(2n\) and index \(d=2\).
    By \cite[Proposition 3.1]{Oguiso-Schroer_Periods}, any small deformation of the Enriques manifold $E_n$ is again of this form.
    \item\label{item_K_n} (\cite[Proposition 4.2]{OguisoSchroer2011}) Let \(n\geq3\) be an odd integer, and let \(C\) and \(C'\) be two elliptic curves. Set \(A\coloneqq C\times C'\), choose a point \(a'\in C'\) of order \(2\) and any point \(a\in C\). The map 
    \[
        \iota:A\to A, \ (b,b')\mapsto(-b+a,b'+a')
    \]
    induces an involution \(\iota^{[n+1]}\) on \(A^{[n+1]}\). When $a\in C$ is a torsion point of order $n+1$, the automorphism $\iota^{[n+1]}$ restricts to a fixed point free involution $\iota^{{\llbracket n\rrbracket}}$ of $\Kum_n(A)$.
  
    In this case, the quotient
    \[K_n\coloneqq \faktor{\text{Kum}_n(A)}{\langle \iota^{\llbracket n\rrbracket}\rangle }\] is an Enriques manifold of dimension \(2n\) and index \(2\). 
\item\label{item_T_n} Consider the abelian surface $A = C\times C'$ where \(C'\) is an elliptic curve, and \(C\) is an elliptic curve admitting an automorphism $h_d\in \Aut_\Z(C)$ of order \(d=3\).
    Given points \(a_1\), \(a_2\in C\) of order $d$, consider the automorphism 
    \begin{align*}
        \psi_d \coloneqq t_a\circ ( h_d\times \id_{C'})\colon A&\lra A,\\
        (b_1, b_2) &\longmapsto (h_d(b_1)+a_1, b_2+a_2)
    \end{align*}
    where $t_a$ is the translation by the point $a\coloneqq(a_1,a_2)$.
    By \cite[Proposition 6.4]{OguisoSchroer2011}, there exists an appropriate choice of \(a_1\) such that the group \(G\coloneqq \langle\psi_d^{\llbracket 3m-1\rrbracket}\rangle\) acts freely on \(\text{Kum}_{3m-1}(A)\) and \[T_{3m-1}\coloneqq \faktor{\text{Kum}_{3m-1}(A)}{G}\] is an Enriques manifold of dimension \(6m-2\) and index $3$.

A similar construction starting from an elliptic curve \(C\) admitting an automorphism $h_d\in \Aut_\Z(C)$ of order \(d=4\) yields Enriques manifolds
    \[R_{4m-1}\coloneqq \faktor{\text{Kum}_{4m-1}(A)}G\]of dimension \(8m-2\) and index $4$.
\end{enumerate}
In \cite[Proposition 4.2]{PacSar23}, Pacienza and Sarti  provided the list of all possible indices for Enriques manifolds obtained as quotient of \hk \ manifolds of \(\text{K3}^{[n]}\)-type and \(\text{Kum}_n\)-type. 
In an upcoming article, Macrì and Mongardi prove that all the Enriques manifolds whose universal cover is a \hk \ manifold of \(\text{K3}^{[n]}\)-type have index $2$.

Moreover, in \cite{BGGG25} Billi, F. Giovenzana, L. Giovenzana, and Grossi proved that there are no Enriques manifolds arising from \hk \ manifolds of OG10-type. This leaves OG6-type manifolds as the last known deformation family of \hk\ manifolds for which nothing about related Enriques manifolds has yet been established.

\subsection{Main results}
The goal of our work is to study the Brauer group of known Enriques manifolds. As is explained at the beginning of Section 3, for any Enriques manifold \(T\) one has
\[\text{Br}(T)=H^3(T,\Z)_{\tors}.\]
In particular, the Brauer group of an Enriques manifold is purely topological and thus invariant under deformation. 
For any Enriques surface $E$, Poincaré duality gives $\Br(E)\simeq \Z/2\Z$.

Our first main result is the following. 
\begin{thm*}[\cref{thm_brauer_computations}]
Let $n$ and $m$ be odd positive integers. In the above notations,  
\[\Br(E_n)=\Z/2\Z ,\]
    \[\Br(T_{3m-1})= \Z/3\Z \oplus \Z/3\Z.\]
\end{thm*}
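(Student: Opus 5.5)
Since $\Br(T)=H^3(T,\Z)_{\tors}$ for an Enriques manifold $T$, the statement reduces to a topological computation. By the universal coefficient theorem, $H^3(T,\Z)_{\tors}\simeq H_2(T,\Z)_{\tors}$. I will compute $H_2$ of the quotient $T=X/G$, where $G=\pi_1(T)$ is cyclic of order $d$ and acts freely on $X$, using the Cartan--Leray spectral sequence
\[
E^2_{p,q}=H_p(G,H_q(X,\Z))\Rightarrow H_{p+q}(T,\Z).
\]
The inputs are as follows. $X$ is simply connected, so $H_1(X)=0$. For $X=\Hilb^n(S)$, $H_2(X,\Z)$ is torsion free. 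For $X=\Kum_{3m-1}(A)$, $H_2(X,\Z)$ is also torsion free, but $H_3(X,\Z)$ may carry torsion, which is harmless in the range we need. The sequence then gives an exact sequence
\[
H_3(T)\to H_3(G,\Z)\xrightarrow{d_3} H_0(G,H_2(X)) \to H_2(T)\to H_2(G,\Z)=0,
\]
where $H_2(G,\Z)=0$ because $G$ is cyclic. The row $q=1$ vanishes. Hence $H_2(T)$ is the cokernel of $d_3\colon \Z/d\to H_2(X)_G$.

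The first main step is to compute the coinvariants $H_2(X,\Z)_G$, or dually the $G$-module structure of $H^2(X,\Z)$. For $E_n$ we have $H^2(\Hilb^n S)=H^2(S)\oplus\Z\delta$, and $i^{[n]}$ acts as $i^*$ on $H^2(S)$ and fixes $\delta$. As a $\Z[\Z/2]$-module, $H^2(S,\Z)\simeq U\oplus U(2)\oplus E_8(2)$ splits as a sum of $U$-type pieces. These are the regular representation $\Z[G]$ (the swapped copies) together with anti-invariant rank-one pieces coming from $U$. Computing the torsion of the coinvariants is therefore linear algebra. For the Enriques surface case this must reproduce $\Z/2$, and the extra $\Z\delta$ (trivial action) adds no torsion. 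For $T_{3m-1}$ we use $H^2(\Kum_n A)=H^2(A)\oplus\Z\delta$, where $H^2(A)=\bigwedge^2 H^1(A)$. Here $\psi$ acts on $H^1(A)=H^1(C)\oplus H^1(C')$ by $h_3^*$ on the first factor, which is $\Z[\zeta_3]$ as a module, and trivially on the second. Decomposing $\bigwedge^2$ gives $\bigwedge^2H^1(C)$ (trivial), $\bigwedge^2H^1(C')$ (trivial), and $H^1(C)\otimes H^1(C')\simeq\Z[\zeta_3]^2$. Since $\Z[\zeta_3]_{G}=\Z/3$, the coinvariants have torsion $(\Z/3)^2$. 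Homology needs the dual module, but by unimodularity it has the same shape.

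The second, and main, obstacle is the differential $d_3\colon H_3(G,\Z)\to H_2(X)_G$, together with the possible extension issues. I would argue that $d_3=0$ by comparing with a point. Equivalently, I would use that the composition $H_3(T)\to H_3(BG)$ is surjective, since $T\to BG$ is $2$-connected. Doubtful, so alternatively I would run the spectral sequence in cohomology, $H^p(G,H^q(X))\Rightarrow H^{p+q}(T)$, and show that the relevant differentials out of $H^1(G,H^2(X))$ vanish. For this I would use a free action argument with a fixed-point-free element (the Lefschetz and transfer relation $\pi_*\pi^*=d$) to bound $H^3(T)_{\tors}$ from above, and then exhibit the classes from below. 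For $E_n$ there is an alternative: compare with the Enriques surface $E=S/i$ via the natural map $E\times\cdots$ or the embedding $E\hookrightarrow E_n$ (fix $n-1$ general points paired up). This shows that $\Br(E)=\Z/2$ injects, which together with the upper bound of order $2$ gives the result. For $T_{3m-1}$ the lower bound $(\Z/3)^2$ would come from the $\Z[\zeta_3]^2$ summand, and the upper bound from the exact sequence above.
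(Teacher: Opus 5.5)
Your homological reduction is correct. With $H_1(X)=0$ and $H_2(G,\Z)=0$ one gets $H_2(T,\Z)=\coker\bigl(d_3\colon H_3(G,\Z)\to H_2(X,\Z)_G\bigr)$. Your $\Z[\zeta_3]^2$ computation for $T_{3m-1}$ also matches the paper's $H^1(G,H^2(X,\Z))\simeq(\Z/3\Z)^2$. Note, however, that $H_2(X,\Z)_{\tors}\simeq H^3(X,\Z)_{\tors}$, so this route needs exactly the same deep input as the paper (Markman/Totaro for $S^{[n]}$, Kapfer--Menet and Hartlieb--Verni for $\Kum_{3m-1}$ with $m$ odd); nothing is sidestepped.

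The real gap is that the whole theorem is the value of $d_3$, your proposal never determines it, and the two cases need opposite answers. For $E_n$, as a $G$-module $H^2(S^{[n]},\Z)\simeq\Z[G]^{10}\oplus\Z_-^{2}\oplus\Z\delta$. (The lattice $U\oplus U(2)\oplus E_8(2)$ you wrote is, up to sign, the anti-invariant part, not $H^2(S,\Z)$.) So the torsion of $H_2(X,\Z)_G$ is $(\Z/2\Z)^2$, not $\Z/2\Z$, and $\Br(E_n)=\Z/2\Z$ \emph{forces} $d_3\neq0$. For $T_{3m-1}$ you need $d_3=0$.

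Your first argument ($T\to BG$ is $2$-connected, hence $d_3=0$) is false. A $2$-connected map is only surjective on $H_2$, and already for an Enriques surface $H_3(E,\Z)\simeq H^1(E,\Z)=0$ while $H_3(B\Z/2,\Z)=\Z/2\Z$, so $d_3$ is injective there; applied to $E_n$ it would give $(\Z/2\Z)^2$. You also have the bounds backwards:
\begin{itemize}
\item For $E_n$ the spectral sequence gives $2\le|\Br(E_n)|\le4$ for free. The ``upper bound of order $2$'' you take for granted is precisely the nontrivial content.
\item For $T_{3m-1}$ the upper bound $(\Z/3\Z)^2$ is free, and the lower bound (i.e.\ $d_3=0$) is the hard part. ``Would come from the $\Z[\zeta_3]^2$ summand'' is not an argument.
\item The transfer identity $\pi_*\pi^*=d$ only shows that $H^3(T,\Z)_{\tors}$ is $d$-torsion.
\end{itemize}

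The missing idea is functoriality of the Cartan--Leray sequence along equivariant maps to spaces where $d_3$ is computable, combined with the arithmetic hypotheses. Your outline never uses these beyond freeness of the action.
\begin{itemize}
\item For $E_n$, the paper uses $S^{[n]}\xrightarrow{h}S^{(n)}\xleftarrow{\Delta'}S$. Since $H^3(S,\Z)=0$, $d_3^{1,2}(S)$ is onto $H^4(G,\Z)$ because $E^{4,0}_\infty(S)\subset H^4(E,\Z)\simeq\Z$. Since $\Delta'^*$ maps $H^2(S^{(n)},\Z)$ isomorphically onto $nH^2(S,\Z)$, $H^1(G,\Delta'^*)$ is surjective for $n$ odd. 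This transports surjectivity of $d_3$ to $S^{(n)}$ and then to $S^{[n]}$.
\item For $\Kum_n(A)\hookrightarrow A^{[n+1]}\to A^{(n+1)}\xleftarrow{\Delta'}A$, $\Delta'^*$ factors through multiplication by $n+1=3m$, so $H^1(G,\Delta'^*)=0$ and a diagram chase gives $d_3^{1,2}(A^{(n+1)})=0$. Kapfer--Menet's factorization of $H^2(A)\to H^2(\Kum_nA)$ through $A^{(n+1)}$ makes $H^1(G,H^2(A^{(n+1)}))\to H^1(G,H^2(\Kum_nA))$ surjective, hence $d_3^{1,2}(\Kum_nA)=0$.
\end{itemize}

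Your surface comparison for $E_n$ can be repaired if aimed at the upper bound. Let $Z$ be a union of $(n-1)/2$ free $G$-orbits and $\beta\colon\mathrm{Bl}_ZS\to S$ the blow-down. Then $x\mapsto\{x\}\cup Z$ extends to a $G$-equivariant $j\colon\mathrm{Bl}_ZS\to S^{[n]}$; this is not an embedding of $S$. Three facts then hold:
\begin{itemize}
\item $d_3^{1,2}(\mathrm{Bl}_ZS)$ is surjective for the same $H^4\simeq\Z$ reason as for $S$.
\item The exceptional classes form free $\Z[G]$-modules, so $H^1(G,H^2(\mathrm{Bl}_ZS))=H^1(G,H^2(S))$.
\item $j^*$ restricts to $\beta^*$ on the $H^2(S)$-summand.
\end{itemize}
Hence $H^1(G,j^*)$ is surjective, so $d_3^{1,2}(S^{[n]})\neq0$. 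For $T_{3m-1}$, by contrast, the vanishing genuinely uses $|G|\mid n+1$.
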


For any Enriques manifold \(T\), the Brauer group can be expressed, using the Cartan--Leray spectral sequence on the universal cover \(X\to T\), as an extension of a subgroup of \((H^3(X,\Z)_{\tors})^{\pi_1(T)}\) by the kernel of a morphism 
$$
d_3\colon H^1(\pi_1(T),H^2(X,\Z)) \lra \pi_1(T),
$$
arising as a differential at the third page of the Cartan spectral sequence, see \cref{prop_sequences_for_AMinv}.

The cohomology group \(H^1(\pi_1(T),H^2(X,\Z))\) is computed for all known Enriques manifolds in \cref{sec:examples}. However, determining precisely the above differential is more involved. We compute $d_3$ for all Enriques manifolds obtained as quotients  of $\Hilb^n(S)$ or $\Kum_n(A)$ by natural automorphisms in \cref{lem_E^40_S} and \cref{lem_E^40_Kum} respectively, hence in particular for all known Enriques manifolds.
 
On the other hand, $H^3(X,\Z)_{\tors}$ remains poorly understood even for known \hkm s. It is expected to always vanish, however this has been confirmed only for the deformation types $K3^{[n]}$ \cite{Markman2007_integral} and $\Kum_{2n}$ (\cite{KapferMenet2018} for $n=1$, \cite{Hartlieb_Verni25} in general). The known Enriques manifolds whose universal cover belongs to one of these deformation types are precisely $E_n$ and $T_{3m-1}$ for odd $m$, which is why these are the manifolds appearing in \cref{thm_brauer_computations}.

In the second part of the paper, we study the map
\[
    \pi^*_{\Br}:\text{Br}(T)\to\text{Br}(X),
\]
where \(T\) is an Enriques manifold and \(X\) is the \hk\ manifold associated to \(T\). This was done for Enriques surfaces by Beauville in \cite{Bea09-Brauer_of_Eriques_surf}.

In the case where \(T\) is of type \(E_n\), it is possible to give a characterization of those \(T\) for which the image of the generator of \(\text{Br}(E_n)\) through \(\pi^*_{\Br}\) is zero.

\begin{thm*}[\cref{corEquivalenceBrauerClass}]
    Let $\pi\colon S^{[n]}\to E_n$ be the Enriques manifold obtained from a K3 surface $S$ with a fixed point free involution $i$, and denote by $\sigma:=i^{[n]}$ the induced involution on $S^{[n]}$.
    Let $b_n\in \mathrm{Br}(E_n)$ be the non-zero class. The following are equivalent:
    \begin{enumerate}[label = \rm (\arabic*)]
        \item $\pi^*_{\Br}b_n=0$ in $\mathrm{Br}(S^{[n]})$;
        \item there exists $L\in \mathrm{Pic}(S^{[n]})$ satisfying $\pi_*c_1(L)=0$ and $c_1(L)\not\in (1-\sigma^*)\left(H^2(S^{[n]},\Z)\right)$;
        \item there exists $L\in \mathrm{Pic}(S^{[n]})$ satisfying $\sigma^*L=L^\vee$ and $q_{S^{[n]}}(L)\equiv 2 \pmod{4}$.
    \end{enumerate}
    Here, $q_{S^{[n]}}$ denotes the Beauville--Bogomolov--Fujiki form on $H^2(S^{[n]},\Z)$.
\end{thm*}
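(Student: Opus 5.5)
Write $X=S^{[n]}$, $T=E_n$ and $G=\langle\sigma\rangle\simeq\Z/2\Z$. I would mimic Beauville's argument for Enriques surfaces. The key ingredient is the low-degree part of the Hochschild--Serre (Cartan--Leray) spectral sequence for the $\mathbb{G}_m$-cohomology of the étale cover $\pi$:
\[
\Pic(X)^G \lra H^2(G,\C[X]^*)\lra \Ker\big(\Br(T)\to\Br(X)\big) \lra H^1(G,\Pic(X)) \lra H^3(G,\C^*).
\]
Since $X$ is compact, $\C[X]^*=\C^*$, and $H^2(G,\C^*)=H^3(G,\C^*)=0$ for a cyclic group acting trivially on a divisible group. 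Hence
\[
\Ker \pi^*_{\Br}\simeq H^1(G,\Pic(X))=\frac{\{L:\sigma^*L=L^\vee\}}{\{M\otimes\sigma^*M^\vee\}}.
\]
Because $\Br(T)=\Z/2\Z$ by \cref{thm_brauer_computations}, condition (1) is equivalent to $H^1(G,\Pic X)\neq0$. As $H^1(X,\cO_X)=0$ and $X$ is simply connected, $c_1\colon\Pic X\to \NS(X)\subset H^2(X,\Z)$ is injective. So $H^1(G,\Pic X)=\Ker(1+\sigma^*)/\Im(1-\sigma^*)$, computed inside $\NS(X)$.

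\textbf{(1)$\Leftrightarrow$(2).} Since $\pi^*\pi_*=1+\sigma^*$ and $\pi^*$ is injective on $H^2$ modulo torsion, $\pi_*c_1(L)=0$ should be equivalent to $\sigma^*L=L^\vee$. The concern is torsion in $H^2(T,\Z)$: one has $\pi_*c_1(L)\in H^2(T,\Z)$, and one must check that $\pi_*c_1(L)=0$ exactly when $(1+\sigma^*)c_1(L)=0$. I expect $\pi^*\colon H^2(T,\Z)\to H^2(X,\Z)$ to kill the torsion $\Z/2$, which would break this naive equivalence. So I would rather use the transfer identity directly, or read (2) as $\pi_*$ into $H^2(T,\Z)$ and check that the relevant torsion class is detected. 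A cleaner route might be to show that $\pi_*$ of an anti-invariant class lands in torsion, and that it is zero precisely when the corresponding element of $H^1(G,H^2(X,\Z))$ lies in $\Ker d_3$, using \cref{prop_sequences_for_AMinv} and \cref{lem_E^40_S}. The remaining part of (2), namely $c_1(L)\notin(1-\sigma^*)H^2(X,\Z)$, must then be compared with $c_1(L)\notin(1-\sigma^*)\NS(X)$. This holds because if $c=x-\sigma^*x$ is of type $(1,1)$ and $\sigma^*$ acts by $-1$ on $H^{2,0}$, then replacing $x$ by its $\sigma$-anti-invariant-free adjustment does not obviously work. Instead I would use that $H^1(G,\NS)\to H^1(G,H^2)$ is injective, which follows from the long exact sequence of $0\to\NS\to H^2\to H^2/\NS\to0$ and $(H^2/\NS)^G$ analysis (the transcendental part is torsion-free with $\sigma$ acting as $-1$ on $H^{2,0}$). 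This identification is the step I expect to be the main obstacle.

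\textbf{(2)$\Leftrightarrow$(3).} Make the lattice explicit: $H^2(X,\Z)=H^2(S,\Z)\oplus\Z\delta$ with $\sigma^*\delta=\delta$. As a $G$-lattice, $H^2(S,\Z)\simeq U\oplus U(2)\cdots$, namely $U\oplus E_8(-1)^{\oplus2}\oplus U^{\oplus2}$ with $\sigma$ fixing the diagonal $U(2)\oplus E_8(-2)$ part, that is, swapping two copies of $U\oplus E_8(-1)$ and acting as $-1$ on $U$. Then $\Ker(1+\sigma^*)=\{(x,-x)\}\oplus U$ and $\Im(1-\sigma^*)=\{(x,-x)\}\oplus 2U$. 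The quotient is $U/2U$, with the form mod $4$ distinguishing $(1-\sigma^*)$-images: on $\{(x,-x)\}\oplus 2U$ one has $q\equiv0\bmod4$, while an anti-invariant class outside it has $q\equiv 2\bmod 4$ exactly when its $U$-component is nonzero mod $2$ with odd pairing, that is, $(1,1)$ mod $2$. The residual case, $U$-component $\equiv(1,0)$ mod $2$ with $q\equiv0\bmod4$, must be handled by showing it is killed by $d_3$ or is excluded by condition (2). This matches $\Z/2$ after $d_3$ and gives the $q\equiv2\pmod4$ criterion, exactly as in Beauville's surface case.
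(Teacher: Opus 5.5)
Your proposal has a genuine gap, and it starts in the first step. The vanishing $H^3(G,\C^*)=0$ is false. By \cref{lem_cyclic_group_cohom}, for $G=\Z/2\Z$ acting trivially on $\C^*$ one has $H^{\mathrm{odd}}(G,\C^*)=\Ker(z\mapsto z^2)=\mu_2$; equivalently $H^3(G,\C^*)\simeq H^4(G,\Z)\simeq\Z/2\Z$ via $0\to\Z\to\C\to\C^*\to0$. Only the even degrees vanish.

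So the Hochschild--Serre sequence only gives $\Ker\pi^*_{\Br}=\Ker\bigl(H^1(G,\Pic X)\to H^3(G,\C^*)\bigr)$. Identifying this edge map with the norm is exactly Beauville's Proposition 4.1, which gives $\Ker\pi^*_{\Br}\simeq\Ker(\pi_*|_{\NS(X)})/(1-\sigma^*)\NS(X)$. Your own computation signals the problem: $H^1(G,\Pic X)\subseteq U/2U$ can be all of $(\Z/2\Z)^2$, for instance when $\NS(X)$ contains anti-invariant classes with $U$-components $\equiv(1,0)$ and $\equiv(0,1)$ mod $2$. That group cannot inject into $\Br(E_n)\simeq\Z/2\Z$. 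For the same reason, the criterion ``(1) iff $H^1(G,\Pic X)\neq0$'' wrongly counts anti-invariant algebraic classes whose $U$-component is $\equiv(1,0)$ or $(0,1)$.

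The torsion worry you raise under (1)$\Leftrightarrow$(2) is precisely this missing $\mu_2$. Once it is corrected, (1)$\Leftrightarrow$(2) is what the paper imports from Beauville's Corollary 4.3. Your injectivity $H^1(G,\NS)\hookrightarrow H^1(G,H^2)$, deduced from $(H^2/\NS)^G=0$, is a correct way to pass from $(1-\sigma^*)\NS$ to $(1-\sigma^*)H^2$.

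The real content is (2)$\Leftrightarrow$(3), and your proposal stops where the work begins. For anti-invariant $\lambda$ one has $\pi^*\pi_*\lambda=0$, so $\pi_*\lambda\in\{0,K_{E_n}\}$. Hence $\lambda\mapsto\pi_*\lambda$ is a linear form on $H^1(G,H^2(X,\Z))=U/2U$. Your mod-$4$ arithmetic is correct, but (2)$\Leftrightarrow$(3) amounts to showing that this form has kernel exactly $\{0,\epsilon\}$. That means $\epsilon=(1,1)$ goes to $0$, while \emph{both} $(1,0)$ and $(0,1)$, not only $(1,0)$, go to $K_{E_n}$.

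Neither the quadratic form nor an appeal to $d_3^{1,2}$ can settle this. Even granting a compatibility between $\pi_*$ and $d_3^{1,2}$, you would only learn that the kernel is \emph{some} line in $U/2U$, not which one. The paper supplies two geometric inputs, both obtained by going through $E_{(n)}=S^{(n)}/\langle i^{(n)}\rangle$ down to the Enriques surface:
\begin{enumerate}[label=(\alph*)]
\item \cref{pi_*} produces $\lambda_0=h^*\alpha$ with $\pi_*\lambda_0=K_{E_n}$, so the form is non-zero. This uses the surjectivity of $\pi_{E,*}$, the transfer squares, and $f^*K_{E_{(n)}}=K_{E_n}$.
\item \cref{kerImPullB} shows $\pi^*_2\beta_n=\epsilon$ for the class $\beta_n$ transported from Beauville's $\beta_E$ with $\tilde q(\pi_E^*\beta_E)=1$. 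Hence $\pi_{*,2}\epsilon=2\beta_n=0$.
\end{enumerate}
These are combined in \cref{kerPushF} and \cref{corEquivalenceNumerical}. Without an argument of this kind pinning down the linear form, the proposal does not prove the equivalence.
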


Moreover, in \cref{corEquivalenceBrauerClassModuli}, we show that these equivalent conditions define a locus of codimension one in the moduli space of Enriques manifolds of type $E_n$. These results extend \cite[Corollary 5.7 and Corollary 6.5]{Bea09-Brauer_of_Eriques_surf} to the case \(n \geq 2\).

In \cref{secSBvar}, we consider the morphism $\pi^*_{\Br}$ for any Enriques manifold. 
Our main result is \cref{charactSBV}, where we give a geometric characterisation of the classes in the kernel of $\pi^*_{\Br}$ by explicitly constructing the corresponding non-trivial Brauer--Severi varieties over $T$. This provides a concrete geometric counterpart to the algebraic criterion of \cite[Proposition~4.1]{Bea09-Brauer_of_Eriques_surf} and extends to all dimensions the construction carried out for surfaces in \cite[Lemma~10]{Martinez}.

\subsection*{Acknowledgements}
This project started during the summer school PRAGMATIC 2025 in Catania;
we thank the organisers Elena Guardo, Alfio Ragusa, Francesco Russo, Giovanni Staglianò and Giuseppe Zappalà for their kind hospitality, the stimulating research environment and the financial support during the school. We warmly thank Paolo Stellari for suggesting this problem to us, and Luigi Martinelli and James Hotchkiss for useful discussion at the school. We thank Burt Totaro for suggesting the strategy to prove \cref{torsH3}.

\section{Spectral sequences and group cohomology}\label[section]{sec:ss}

\subsection{The Cartan--Leray spectral sequence}\label{subsec_Carta_Leray} A standard introductory reference for spectral sequences is \cite[Chapter 9]{Voisin2002I}. Here we recall the construction of the Cartan--Leray spectral sequence.

Let \(M\) be a locally contractible topological space and let \(G\) be a finite group acting on \(M\). Taking the Grothendieck spectral sequence associated to the composition of functors
\begin{equation}\label{eq_sections_then_invariants}
  G\textrm{-}\Sh(M)\xrightarrow{\Gamma_G(M,-)} G\textrm{-}\Mod\xrightarrow{(-)^G} \Mod,
\end{equation}
and applied to the sheaf \(\Zsheaf_{M}\in G\textrm{-}\Sh(M)\),
we obtain a spectral sequence $E(M, G)$ whose second page is 
\begin{equation}\label{eq_E2page_2}
E^{p,q}_2(M, G)=H^p(G,H^q(M,\Z)).
\end{equation}

If \(f \colon M \to M'\) is a \(G\)-equivariant continuous map of locally contractible spaces with \(G\)-action, by the functoriality of the Grothendieck spectral sequence (\cite[Proposition 4.8]{Voisin2002II}) the natural arrow \(\underline{\Z}_{M'} \to Rf_*\underline{\Z}_M\) in \(D^+(G\textrm{-}\Sh(M'))\) induces a morphism of spectral sequences
\[E(M',G)\to E(M,G).\]
Since, under the identifications \(H_{\textup{sing}}^q(M,\Z)\simeq H^q(M,\underline{\Z}_M)\), the arrow \(\underline{\Z}_{M'} \to Rf_*\underline{\Z}_M\) induces precisely the Betti cohomology pullback, one can check that the second page arrows \[H^p(G,H^q(M',\Z)) \to H^p(G,H^q(M,\Z))\] are the ones induced in group cohomology by Betti cohomology pullback maps
\[f^* \colon H^q(M',\Z) \to H^q(M,\Z)\]

If moreover the group action is \textit{free} and we set \(N\coloneqq  M/G\), the pullback along the quotient map \(\pi \colon M\to N\) induces an equivalence of categories
\(\Sh(N) \xrightarrow{\pi^*} G\textrm{-}\Sh(M)\). Up to this equivalence, the composition of the two functors in \eqref{eq_sections_then_invariants} is precisely the global section functor
\[\Sh(N)\xrightarrow{\Gamma(N,-)} \Mod,\]
which means the abutment of the spectral sequence is isomorphic to
\begin{equation}\label{eq:abutment_2}
E^n(M, G)=H^{n}(N,\Z)\end{equation}
and the maps between abutments induced by \(f\colon M\to M'\) are the Betti cohomology pullbacks  
\[f^* \colon H^n(M'/G,\Z)\to H^n(M/G,\Z).\]
\subsection{Group cohomology}
We recall a computation in group cohomology which we will use systematically. For a group $G$ and a $G$-module $A$, we can consider the sub-module 
\[
A^G \coloneqq  \{a \in A \mid g\cdot a = a \text{ for any } g \in G\}
\]
of $G$-invariant elements of $A$.
This gives a functor \((-)^G\) which is left-exact but not right-exact. In particular, we can define the group cohomology of $G$ as the right derived functor of taking $G$-invariants:
\[
H^i(G,A) \coloneqq  R^i(-)^G (A).
\]
In this work, we mainly consider finite cyclic groups. In this case, it is possible to compute group cohomology as described in the following lemma.

\begin{lem}\label[lem]{lem_cyclic_group_cohom}
Let \(G\) be a cyclic group of order \(d\). Let \(\sigma\) be a generator of $G$ and let \(A\) be any \(G\)-module. Consider moreover the endomorphism \[N\coloneqq \sum_{i=0}^{d-1}\sigma^i\]
of $A$. For any \(p\geq 1\), the cohomology groups $H^p(G,A)$ are equal to
\[H^p(G,A)=\begin{cases}
    \faktor{\Ker N}{\Im (1-\sigma)} & \textup{if }p \textup{ is odd,}\rule[-13pt]{0pt}{35pt}\\
    \faktor{\Ker (1-\sigma)}{\Im N} & \textup{if }p \textup{ is  even.} 
    
\end{cases}\]
In particular, when \(A=\Z\) with the trivial \(G\)-action, for any \(p\geq 1\), we get
\[H^p(G,\Z)=\begin{cases}
    0 & \textup{if }p \textup{ is odd,}\\
    G & \textup{if }p \textup{ is even.}
\end{cases}\]
\end{lem}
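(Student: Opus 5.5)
The plan is to compute $H^\bullet(G,A)$ from an explicit projective resolution of the trivial module $\Z$ over the group ring $\Z[G]$, and then read off the answer. Write $\Z[G]=\Z[\sigma]/(\sigma^d-1)$ and consider the periodic complex
\[
\cdots \xrightarrow{\;N\;} \Z[G] \xrightarrow{\;1-\sigma\;} \Z[G] \xrightarrow{\;N\;} \Z[G] \xrightarrow{\;1-\sigma\;} \Z[G] \xrightarrow{\;\varepsilon\;} \Z \to 0,
\]
where $\varepsilon$ is the augmentation $\sigma\mapsto 1$. The maps alternate between multiplication by $1-\sigma$ and by $N$.

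The first step is to check that this complex is exact, so that it is a free resolution of $\Z$. Work in the $\Z$-basis $1,\sigma,\dots,\sigma^{d-1}$.
\begin{itemize}
\item $\Ker\varepsilon=\Im(1-\sigma)$. Indeed, $\sum a_i\sigma^i$ has $\sum a_i=0$ exactly when it is a $\Z$-combination of the elements $\sigma^i-1=-(1-\sigma)(1+\sigma+\dots+\sigma^{i-1})$.
\item $\Ker(1-\sigma)=\Z\cdot N=\Im N$. An element fixed by multiplication by $\sigma$ has all its coefficients equal.
\item $\Ker N=\Ker\varepsilon=\Im(1-\sigma)$. This holds because $N\cdot x=\varepsilon(x)N$.
\end{itemize}
This verification is the only step needing any care, and it is elementary.

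Next, apply $\mathrm{Hom}_{\Z[G]}(-,A)$ and use $\mathrm{Hom}_{\Z[G]}(\Z[G],A)\cong A$. The complex computing $H^\bullet(G,A)$ becomes
\[
A \xrightarrow{\;1-\sigma\;} A \xrightarrow{\;N\;} A \xrightarrow{\;1-\sigma\;} A \xrightarrow{\;N\;} \cdots .
\]
Here the multiplication maps become the corresponding endomorphisms of $A$, since $\Z[G]$ is commutative. Since group cohomology is $\Ext^p_{\Z[G]}(\Z,A)=R^p(-)^G(A)$, we can read off degree $p\ge1$ directly.
\begin{itemize}
\item For odd $p$, the outgoing map is $N$ and the incoming map is $1-\sigma$, so $H^p(G,A)=\Ker N/\Im(1-\sigma)$.
\item For even $p\ge 2$, the outgoing map is $1-\sigma$ and the incoming map is $N$, so $H^p(G,A)=\Ker(1-\sigma)/\Im N$.
\end{itemize}
As a sanity check, degree $0$ gives $\Ker(1-\sigma)=A^G$.

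Finally, specialise to $A=\Z$ with trivial action. Then $1-\sigma=0$ and $N$ is multiplication by $d$.
\begin{itemize}
\item For odd $p$: $\Ker N=0$ because $\Z$ is torsion-free, so $H^p=0$.
\item For even $p$: $H^p=\Z/d\Z\simeq G$, since $G$ is cyclic of order $d$.
\end{itemize}
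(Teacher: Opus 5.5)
Your proof is correct and complete: the periodic free resolution, the three exactness checks, the dualisation to the cochain complex with alternating maps $1-\sigma$ and $N$ on $A$, and the specialisation to the trivial module $\Z$ all go through. The paper gives no proof of this lemma and only recalls it as a standard fact; your argument is the usual textbook computation for cyclic groups (cf.\ \cite[\S 6.2]{Weibel-intro_to_hom_algebra}).
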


\section{Brauer group of Enriques manifolds}\label[section]{secBrEnrM}
In this section, we recall the notion of Brauer group, which, under our assumptions, corresponds to the \textit{cohomological} Brauer group. Using the geometry of Enriques manifolds, or more precisely of their universal covers, we study the behaviour of the differentials in the Cartan--Leray spectral sequence associated to the action of the group of deck transformations. 
\begin{defin}
    For a smooth complex projective variety $X$ we define the \textit{Brauer group} as 
    $$\mathrm{Br}(X) \coloneqq  H^2(X_{\rm \acute et},\mathcal{O}_X^*).$$
\end{defin}
\begin{rem}
    Under our assumptions, this definition of the Brauer group coincides with the torsion subgroup of $H^2\left(X_{\rm an},\mathcal{O}_{X_{an}}^*\right)$, see \cite{Schroer_Brauer_group}. From now on, we always consider complex varieties with the analytic topology.
\end{rem}
In order to compute Brauer groups, we make use of the following exact sequences. For any positive integer \(n\) and any locally contractible topological space \(X\), the short exact sequence of abelian groups 
\[
0\longrightarrow \underline{\Z}_X \xrightarrow{\cdot n} \underline{\Z}_X \longrightarrow \underline{\Z/n\Z}_X\longrightarrow 0,
\]
yields
\begin{equation}\label{degree1_second_exact_seq_Br}
    0\longrightarrow H^1(X,\Z)\otimes\Z/n\Z \longrightarrow H^1(X,\Z/n\Z) \longrightarrow H^2(X,\Z)[n] \longrightarrow 0.
\end{equation}
and
\begin{equation}\label{second_exact_seq_Br}
    0\longrightarrow H^2(X,\Z)\otimes\Z/n\Z \longrightarrow H^2(X,\Z/n\Z) \longrightarrow H^3(X,\Z)[n] \longrightarrow 0.
\end{equation}
For \(X\) a complex manifold, we also have the Kummer exact sequence
\[
0\longrightarrow \underline{\Z/n\Z}_X \longrightarrow \mathcal{O}^*_X \longrightarrow \mathcal{O}^*_X \longrightarrow 0,
\]
from which one obtains
\begin{equation}\label{first_exact_seq_Br}
    0\longrightarrow \mathrm{Pic}(X)\otimes\Z/n\Z \longrightarrow H^2(X,\Z/n\Z) \longrightarrow \mathrm{Br}(X)[n] \longrightarrow 0.
\end{equation}

    The first Chern class $c_1\colon \Pic(X) \to H^2(X, \Z)$ induces a morphism between the short exact sequences \eqref{first_exact_seq_Br} and \eqref{second_exact_seq_Br}, from which by the Snake Lemma we obtain the exact sequence 
\[0\longrightarrow \frac{H^2(X,\Z)}{c_1(\Pic(X))}\,\otimes\Z/n\Z \longrightarrow \mathrm{Br}(X)[n] \longrightarrow H^3(X,\Z)[n] \longrightarrow 0.\]
Taking the colimit over all \(n\geq 1\), this becomes
\begin{equation}\label{eq_third_exact_se_Br}
0\longrightarrow \frac{H^2(X,\Z)}{c_1(\Pic(X))}\,\otimes\Q/\Z \longrightarrow \mathrm{Br}(X) \longrightarrow H^3(X,\Z)_{\tors}\longrightarrow 0.
\end{equation}

We call the term \(H^3(X,\Z)_{\tors}\) the \textit{topological Brauer group}. When $c_1$ is surjective, the Brauer group of $X$ is therefore isomorphic to $H^3(X, \Z)_{\tors}$.

Consider an Enriques manifold $\pi\colon X\to T$ and denote by $\psi$ a generator of $\pi_1(T)$. Since the action of $\psi$ on $X$ is purely non-symplectic, one has \(h^{2,0}(T)=0\) and thus the first Chern class \(c_1\colon \Pic(T)\to H^2(T,\Z)\) is surjective. Hence, the Brauer group of $T$ is isomorphic to $H^3(T, \Z)_{\tors}$. We study it using the Cartan--Leray spectral sequence \(E(X, G)\) for the morphism $\pi$ defined in \cref{sec:ss}, where $G\coloneqq  \langle \psi \rangle$.

Since $X$ is a \hk\ manifold, we have \(E^{p,1}_2(X, G)=H^{p}(G, H^1(X, \Z))=0\) for all $p$. By \cref{lem_cyclic_group_cohom}, the page $E_2(X,G)$ takes the form
\begin{equation*}
    \begin{tikzcd}
        \textcolor{gray}{q = 3}&  H^3(X, \Z)^{G} \ar[drr, "d_2^{0,3}"] & * & * & * &  * & \cdots\\
        \textcolor{gray}{q = 2}&  H^2(X, \Z)^{G} & V_1 & V_2 & V_1 & V_2 &\cdots\\
        \textcolor{gray}{q = 1}&  0 & 0 & 0 & 0 & 0 & \cdots\\
        \textcolor{gray}{q = 0}&  H^0(X, \Z)^{G} & 0 & \Z/d\Z & 0 & \Z/d\Z  &\cdots\\
        &\textcolor{gray}{p=0} &\textcolor{gray}{p=1} & \textcolor{gray}{p=2} & \textcolor{gray}{p=3} & \textcolor{gray}{p=4},
    \end{tikzcd}
\end{equation*}
where $V_i = H^i(G, H^2(X, \Z))$ for $i=1,2$.
Therefore, the third page looks like:
\begin{equation*}
    \begin{tikzcd}
        \textcolor{gray}{q = 3}&  \Ker d_2^{\,0,3} \ & * & * & * & * &\cdots\\
        \textcolor{gray}{q = 2}&  H^2(X, \Z)^{G} & V_1 \ar[rrrdd, "d_3^{1,2}"] & \faktor{V_2}{\im d_2^{0,3}} & * & * &\cdots\\
        \textcolor{gray}{q = 1}&  0 & 0 & 0 & 0 & 0&\cdots\\
        \textcolor{gray}{q = 0}&  H^0(X, \Z)^{G} & 0 & \Z/d\Z & 0 & \Z/d\Z &\cdots\\
        &\textcolor{gray}{p=0} &\textcolor{gray}{p=1} & \textcolor{gray}{p=2} & \textcolor{gray}{p=3} & \textcolor{gray}{p=4},
    \end{tikzcd}
\end{equation*}

This analysis yields the following description of the third degree cohomology of \(T\).
\begin{prop}\label[prop]{prop_sequences_for_AMinv}
    The Cartan-Leray spectral sequence associated to \(\pi\colon X\rightarrow T\) induces a short exact sequence
    \begin{equation}\label{eq_H3_SES}
\begin{tikzcd}
	0 & {\Ker d^{1,2}_3} & {H^3(T,\Z)} & {\Ker d^{0,3}_2} & 0.
	\arrow[from=1-1, to=1-2]
	\arrow[from=1-2, to=1-3]
	\arrow[from=1-3, to=1-4]
	\arrow[from=1-4, to=1-5]
\end{tikzcd}
    \end{equation}
    If \(H^3(X,\Z)^G\) is torsion free, the above sequence splits and we get the exact sequence
\begin{equation}\label{eq_Artin_Mumford_SES}
\begin{tikzcd}[sep = small]
	0 & {H^3(T,\Z)_{\tors}} & {H^1(G,H^2(X,\Z))} & {H^4(G,\Z)} & {E^{4,0}_4(X, G)} & 0.
	\arrow[from=1-1, to=1-2]
	\arrow[from=1-2, to=1-3]
	\arrow["{d^{1,2}_3}", from=1-3, to=1-4]
	\arrow[from=1-4, to=1-5]
	\arrow[from=1-5, to=1-6]
\end{tikzcd}\end{equation}
\end{prop}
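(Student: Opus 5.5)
The plan is to read off the filtration on the abutment $E^3(X,G)=H^3(T,\Z)$ of the Cartan--Leray spectral sequence $E(X,G)$ from \cref{subsec_Carta_Leray}. The graded pieces of this filtration are $E_\infty^{p,3-p}$ for $p=0,\dots,3$. Using the shape of the $E_2$ page displayed above, we have $E_2^{p,1}=0$ for all $p$, because $H^1(X,\Z)=0$. We also have $E_2^{3,0}=H^3(G,\Z)=0$ by \cref{lem_cyclic_group_cohom}. So only $E_\infty^{1,2}$ and $E_\infty^{0,3}$ can contribute.

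First, compute $E_\infty^{1,2}$. Incoming differentials land from $E_r^{1-r,1+r}$, and these vanish since $1-r<0$ for $r\ge2$. The outgoing $d_2^{1,2}$ lands in $E_2^{3,1}=0$. The outgoing $d_3^{1,2}$ lands in $E_3^{4,0}$, a quotient of $\Z/d\Z$. For $r\ge4$ the target $E_r^{1+r,3-r}$ has negative second index. Hence $E_\infty^{1,2}=\Ker d_3^{1,2}$.

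Next, compute $E_\infty^{0,3}$. There are no incoming differentials. The outgoing $d_2^{0,3}$ lands in $V_2$. The map $d_3^{0,3}$ lands in $E_3^{3,1}=0$, and $d_4^{0,3}$ lands in $E_4^{4,0}$. Here I must be careful: $E_\infty^{0,3}$ is $\Ker d_2^{0,3}\cap\Ker d_4^{0,3}$, so the statement needs $d_4^{0,3}=0$.

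To see this, I would use the fact that $E_4^{4,0}$ is a quotient of $H^4(G,\Z)$ by images of earlier differentials, and the edge map $H^4(G,\Z)\to H^4(T,\Z)$ has to be considered. A cleaner route is to compare with the spectral sequence of a point. The constant map $X\to \mathrm{pt}$ is $G$-equivariant. Via the functoriality recalled in \cref{subsec_Carta_Leray}, I expect the base row $E_r^{*,0}$ to receive only the classes killed through pullback from $BG$. Alternatively, one can argue via a section: $G$ acts freely, but $X$ may have no fixed points, so splitting the bottom row is not available.

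I expect this step to be the main obstacle. What I would try first is the transgression description: $d_4^{0,3}$ is defined on the kernel of $d_2^{0,3}$ and $d_3^{0,3}$ and is the transgression $H^3(X)^G\supset\cdots\to H^4(G)$. I would use the multiplicative structure together with the fact that the row $q=0$ is $\Z/d$-periodic with generator $u\in H^2(G,\Z)$. Failing that, I would interpret the statement's $\Ker d_2^{0,3}$ as the surviving $E_\infty^{0,3}$, which is how the subsequent split form only uses the torsion.

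With the graded pieces identified, the filtration $0\subset F^1\subset H^3(T,\Z)$ gives \eqref{eq_H3_SES}.

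For the second claim, suppose $H^3(X,\Z)^G$ is torsion free. Then its subgroup $\Ker d_2^{0,3}$ is free, so \eqref{eq_H3_SES} splits. Taking torsion, and using that $\Ker d^{1,2}_3\subseteq V_1$ is torsion since $V_1$ is killed by $d$, gives $H^3(T,\Z)_{\tors}=\Ker d_3^{1,2}$. Finally, $E_4^{4,0}=E_3^{4,0}/\Im d_3^{1,2}$. Here $E_3^{4,0}=H^4(G,\Z)$, because the incoming $d_2^{2,1}$ comes from $E_2^{2,1}=0$. This yields the four-term exact sequence \eqref{eq_Artin_Mumford_SES}.
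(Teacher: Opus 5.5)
Your approach is the same as the paper's. You read the filtration of $H^3(T,\Z)$ off the spectral sequence and get $E_\infty^{1,2}=\Ker d_3^{1,2}$. For the second part you use that the top graded piece lies in the free group $H^3(X,\Z)^G$, that $V_1$ is $d$-torsion, and that $E_3^{4,0}=H^4(G,\Z)$ because $E_2^{2,1}=0$. All of this is correct.

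The step you leave open is $d_4^{0,3}=0$ on $\Ker d_2^{0,3}$. For the first sequence as literally stated, where $\pi^*\colon H^3(T,\Z)\to\Ker d_2^{0,3}$ is a surjection, this is a genuine gap. It is, however, the same gap as in the paper. The paper's proof only says the sequence ``follows from examining the third page'', so it tacitly identifies $E_\infty^{0,3}$ with $E_3^{0,3}=\Ker d_2^{0,3}$.

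Neither of your proposed repairs can close it. Comparison with a point is vacuous here, since the point's spectral sequence has zero row $q=3$. $H^*(G,\Z)$-linearity only gives $d_4(ux)=u\,d_4(x)$, which relates $d_4^{0,3}$ to $d_4^{2,3}$ without forcing either to vanish. In fact the vanishing is not formal. Take $G=\Z/p$ acting freely on $S^3$. The $E_2$-page has vanishing rows $q=1,2$ and $E_2^{0,3}=\Ker d_2^{0,3}=\Z$. Yet $d_4^{0,3}\colon\Z\to H^4(G,\Z)=\Z/p$ is onto, because $H^4(L(p,1),\Z)=0$. Equivalently, $\pi^*\colon H^3(L(p,1),\Z)\to H^3(S^3,\Z)$ is multiplication by $p$, so it is not onto.

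In the Enriques setting the missing input can be stated precisely:
\begin{itemize}
\item Since $E_2^{0,1}=0$, the image $e$ of a generator of $H^2(G,\Z)$ generates $H^2(T,\Z)_{\tors}\simeq\Z/d\Z$.
\item $E_\infty^{4,0}=E_4^{4,0}/\Im d_4^{0,3}$ is the subgroup of $H^4(T,\Z)$ generated by $e^2$.
\item Hence $d_4^{0,3}=0$ exactly when $e^2$ has order $|E_4^{4,0}|$. This is a geometric statement that is not contained in the shape of the $E_2$-page.
\end{itemize}
It holds trivially when $H^3(X,\Z)=0$ or when $d_3^{1,2}$ is surjective; both happen for $E_n$. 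For Kummer-type covers, where $H^3(X,\Z)\neq0$, it would need a separate argument.

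What the spectral sequence does prove is your fallback: \eqref{eq_H3_SES} with $\Ker d_2^{0,3}$ replaced by $E_\infty^{0,3}=\Ker\bigl(d_4^{0,3}\colon\Ker d_2^{0,3}\to E_4^{4,0}\bigr)$. Since $E_\infty^{0,3}$ is still a subgroup of $H^3(X,\Z)^G$, your proof of the splitting and of the four-term sequence \eqref{eq_Artin_Mumford_SES} goes through unchanged. That four-term sequence is the only part of the proposition used later in the paper.
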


\begin{proof}
    The exactness of \eqref{eq_H3_SES} follows from examining the third page of the above spectral sequence. If moreover \(H^3(X,\Z)^G\) is torsion free then so is \(\Ker d_2^{3,0}\), hence \eqref{eq_H3_SES} splits. By \cite[Theorem 6.5.8]{Weibel-intro_to_hom_algebra}, \(V_1\) is a torsion group hence its subquotient \(\Ker d_3^{1,2}\) is also torsion. Taking the torsion part of \eqref{eq_H3_SES} then yields \eqref{eq_Artin_Mumford_SES}.
\end{proof}

Therefore, in order to compute the Brauer group of $T$, we need to understand the differential \(d^{1,2}_3\). 
The rest of the section is devoted to studying this differential for Enriques manifolds obtained as quotients of $S^{[n]}$ or $\Kum_n(A)$ 
under a natural automorphism. This is done in \cref{lem_E^40_S} and \cref{lem_E^40_Kum} respectively.

We start by introducing some preliminary results.
\begin{thm}[\cite{Totaro_2020_integral}]\label[thm]{lem_torsion_free_Hilbert}
    Let \(S\) be a smooth projective surface with torsion free cohomology. Then the Hilbert schemes \(S^{[n]}\) have torsion free cohomology for all \(n\geq 1\).
\end{thm}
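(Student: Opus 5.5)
This theorem is due to Totaro; here I sketch how I would prove it. The plan is to compute the integral cohomology of \(S^{[n]}\) through a stratification into affine-bundle-like pieces, and to check that no torsion appears.

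First, I would use the Ellingsrud--Strømme / Göttsche approach. Choose a torus-free substitute: since \(S\) is arbitrary, use instead the Briançon--punctual stratification. The Hilbert--Chow morphism \(S^{[n]}\to S^{(n)}\) is stratified by partitions \(\lambda\) of \(n\). Over each stratum \(S^{(n)}_\lambda\), it is a fibration whose fibres are products of punctual Hilbert schemes \(\mathrm{Hilb}^k_0(\C^2)\). These punctual schemes have affine pavings by Ellingsrud--Strømme, hence torsion-free cohomology concentrated in even degrees.

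Second, I would handle the base strata, which are configuration spaces of distinct unordered points of \(S\) with labels. These do have torsion in general, so a naive Leray argument fails. This is the main obstacle. To get around it, I would follow Totaro's idea and compare with the known rational answer (Göttsche's formula) and with mod \(p\) Betti numbers. Concretely, I would show that for every prime \(p\), \(\dim H^*(S^{[n]},\mathbf F_p)\) equals the rational Betti sum given by Göttsche's formula, computed with \(b_i(S)\). By universal coefficients, equality of \(\mathbf F_p\)- and \(\Q\)-Betti totals for all \(p\) forces torsion-freeness.

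Third, I would compute mod \(p\) cohomology via the de Cataldo--Migliorini decomposition, or, for \(\mathbf F_p\), via Totaro's use of the Chow–Künneth/motivic decomposition with integral coefficients. The alternative I would try first is the Nakajima–Grojnowski description, or rather its integral form by Qin--Wang / Li--Qin--Wang. There, \(\bigoplus_n H^*(S^{[n]},\Z)\) is generated from \(H^*(S,\Z)\) by integral creation operators, giving an integral basis indexed by \(H^*(S)\)-labelled partitions. Verifying that this spanning set is a \(\Z\)-basis amounts to checking that it has full rank modulo every \(p\). That check, done via the intersection pairing and Poincaré duality on \(S^{[n]}\) using unimodularity of \(H^*(S,\Z)\), is the step I expect to be hardest.
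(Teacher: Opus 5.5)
The paper does not prove this statement. It quotes Totaro, and remarks that the only case it uses ($S$ a K3 surface) already follows from Markman. Judged on its own, your sketch has a genuine gap.

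\textbf{What is missing.} The reduction in your second step is fine: for a finite complex, $\dim H^*(-,\mathbf{F}_p)=\sum_i b_i$ for all primes $p$ is equivalent to torsion-freeness. But the inequality $\dim H^*(S^{[n]},\mathbf{F}_p)\le\sum_i b_i(S^{[n]})$ is the entire content of the theorem, and none of the routes in your third step establishes it.

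\textbf{The decomposition route fails.} The de Cataldo--Migliorini (G\"ottsche--Soergel) decomposition
$H^*(S^{[n]})\simeq\bigoplus_\lambda H^{*-2(n-\ell(\lambda))}(S^{(\lambda)})$
comes from the decomposition theorem and holds with $\Q$-coefficients only. Its mod $p$ analogue is false, because symmetric products of a surface with torsion-free cohomology do have torsion. Take $S=\P^2$ and $n=2$, where the summands are $\mathrm{Sym}^2\P^2$ and a shifted copy of $\P^2$. By Dold's theorem, $H_*(\mathrm{Sym}^2\P^2,\Z)\simeq H_*(\mathrm{Sym}^2(S^2\vee S^4),\Z)$, where $S^2,S^4$ now denote spheres. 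By Steenrod's splitting this contains $H_6(\mathrm{Sym}^2S^4,\Z)\simeq H_1(\mathbf{RP}^3,\Z)=\Z/2\Z$. So the right-hand side would have total $\mathbf{F}_2$-dimension $8+3=11$. But $(\P^2)^{[2]}$, a $\P^2$-bundle over the dual plane, has total $\mathbf{F}_2$-Betti number $9$. For the same reason, the rational correspondences of the motivic decomposition cannot be made integral with $H^*(S^{(\lambda)},\Z)$ as summands.

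\textbf{The Nakajima route assumes the result.} Suppose $\bigoplus_n H^*(S^{[n]},\Z)$ were known to be spanned by integral creation operators applied to the vacuum, with $\sum_i b_i(S^{[n]})$ classes for each $n$. Then torsion-freeness is immediate, since a finitely generated abelian group spanned by as many elements as its rank is free. So that integral generation is the theorem, not an input, and once it is known no mod $p$ check is needed.

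\textbf{Your proposed check cannot see torsion.} The Poincar\'e pairing vanishes on torsion classes. So unimodularity of a Gram matrix only shows that your classes form a basis of $H^*(S^{[n]},\Z)/\mathrm{tors}$. Linear independence of their reductions mod $p$ only gives $\dim H^*(S^{[n]},\mathbf{F}_p)\ge\sum_i b_i(S^{[n]})$, which holds for any finite complex. You are checking the wrong inequality.

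\textbf{A mechanism that does work.} You need an argument forcing every class, torsion included, into the span of explicit classes. In the case the paper needs, Markman supplies one. He writes the class of the diagonal of $M=S^{[n]}$ as a Chern class of a $K$-theory class built from the universal ideal sheaves. A K\"unneth decomposition of the diagonal of $S$ in topological $K$-theory then puts it in the image of $H^*(M,\Z)\otimes H^*(M,\Z)$, say $[\Delta_M]=\sum_i a_i\otimes b_i$. Every $x\in H^*(M,\Z)$ then satisfies $x=\sum_i\pm\bigl(\int_M x\cup a_i\bigr)\,b_i$, which forces torsion classes to vanish.
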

The case where \(K_S\simeq \cO_S\), which is what we need, follows already from an earlier result of Markman (\cite{Markman2007_integral}): we quote Totaro's theorem too to showcase the state of the art. 

\begin{lem}\label[lem]{lem_basta_svp}
Let \(S\) be a compact complex surface such that \(H^*(S,\Z)\) has no torsion and let \(\Delta \colon S \to S^n\) be the diagonal inclusion. There exists an exact sequence
\begin{equation}\label{eq_SES_1}
   0 \longrightarrow \bw2 H^1(S,\Z) \longrightarrow H^2(S^{n},\Z)^{\mathfrak{S}_n} \xrightarrow{\Delta^*}H^2(S,\Z) \longrightarrow H^2(S,\Z)\otimes \Z/n\Z \longrightarrow 0.
\end{equation}
\end{lem}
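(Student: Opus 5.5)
We propose to compute everything explicitly with the Künneth formula. Since \(H^*(S,\Z)\) is torsion free, Künneth gives
\[
H^2(S^n,\Z)=\bigoplus_{i}\mathrm{pr}_i^*H^2(S,\Z)\;\oplus\;\bigoplus_{i<j}\mathrm{pr}_i^*H^1(S,\Z)\otimes \mathrm{pr}_j^*H^1(S,\Z)
\]
(the \(H^0\) factors being \(\Z\)). The action of \(\mathfrak{S}_n\) permutes the summands, taking graded signs into account: the transposition \((ij)\) sends \(\mathrm{pr}_i^*\alpha\cdot \mathrm{pr}_j^*\beta\) to \(\mathrm{pr}_j^*\alpha\cdot\mathrm{pr}_i^*\beta=-\mathrm{pr}_i^*\beta\cdot\mathrm{pr}_j^*\alpha\) for \(\alpha,\beta\) of degree one.

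We then compute the invariants. On the first block they are exactly the classes \(\sum_i \mathrm{pr}_i^*\gamma\) with \(\gamma\in H^2(S,\Z)\), because \(\mathfrak{S}_n\) permutes the \(n\) copies transitively. On the second block we identify it with \(\bigoplus_{i<j} H^1\otimes H^1\). An invariant element has its \((1,2)\)-component \(x\in H^1\otimes H^1\), and this must satisfy \(\tau(x)=x\), where \(\tau(\alpha\otimes\beta)=-\beta\otimes\alpha\). All other components are then determined by \(x\). The \(\tau\)-invariants of \(H^1\otimes H^1\) are the antisymmetric tensors. For a free module these are exactly the image of \(\bw2 H^1(S,\Z)\) under \(\alpha\wedge\beta\mapsto \alpha\otimes\beta-\beta\otimes\alpha\); this can be checked in a basis, since an antisymmetric matrix has zero diagonal. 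So the invariant part of the second block is isomorphic to \(\bw2 H^1(S,\Z)\), via \(\alpha\wedge\beta\mapsto\sum_{i\neq j}\mathrm{pr}_i^*\alpha\,\mathrm{pr}_j^*\beta\) or a similar formula. Hence
\[
H^2(S^n,\Z)^{\mathfrak{S}_n}\cong H^2(S,\Z)\oplus \bw2 H^1(S,\Z).
\]

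Next we compute \(\Delta^*\). Since \(\mathrm{pr}_i\circ\Delta=\id\), we get \(\Delta^*\sum_i\mathrm{pr}_i^*\gamma=n\gamma\). On the second block, \(\Delta^*\) sends the invariant class attached to \(\alpha\wedge\beta\) to \(\sum_{i<j}(\alpha\beta-\beta\alpha)\), which is a multiple of \(\alpha\cup\beta\). This is generally nonzero, so the naive splitting does not directly exhibit \(\bw2 H^1\) as the kernel. To repair this we change basis: the element \(\sum_{i\ne j}\ldots\) minus \(\sum_i \mathrm{pr}_i^*(c\,\alpha\cup\beta)\) lies in the kernel. This is the main obstacle, since we need \(\Delta^*\) of the second block to be divisible by \(n\). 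Concretely, summing over ordered pairs, the \(\Delta^*\)-image of \(\sum_{i\neq j}\mathrm{pr}_i^*\alpha\,\mathrm{pr}_j^*\beta\) is \(n(n-1)\,\alpha\beta\), which is divisible by \(n\). Therefore each generator \(\sum_{i\ne j}\mathrm{pr}_i^*\alpha\,\mathrm{pr}_j^*\beta-(n-1)\sum_i\mathrm{pr}_i^*(\alpha\beta)\) is invariant and lies in \(\Ker\Delta^*\). Replacing the second block by these corrected generators gives a new direct sum decomposition, because the correction is triangular with respect to the two blocks.

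In the new basis \(\Delta^*\) is zero on the \(\bw2 H^1\) part and equals multiplication by \(n\) on the \(H^2(S,\Z)\) part. Since \(H^2(S,\Z)\) is torsion free, multiplication by \(n\) is injective with cokernel \(H^2(S,\Z)\otimes\Z/n\Z\). This gives exactness of \eqref{eq_SES_1} at every spot: the kernel of \(\Delta^*\) is exactly \(\bw2 H^1(S,\Z)\), and the final map is reduction mod \(n\).
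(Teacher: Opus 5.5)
Your proposal is correct and follows essentially the same route as the paper. Both use the K\"unneth decomposition of $H^2(S^n,\Z)$ and the same description of the $\mathfrak{S}_n$-invariants: your class $\sum_{i\neq j}\pr_i^*\alpha\smile\pr_j^*\beta$ is exactly the paper's $\sum_{i<j}(\pr_i^*\alpha\smile\pr_j^*\beta-\pr_i^*\beta\smile\pr_j^*\alpha)$. Both compute $\Delta^*(\sum_i\pr_i^*\gamma)=n\gamma$ and $n(n-1)\,\alpha\smile\beta$ on the mixed classes, and use the same corrected kernel generators up to sign. You also justify two points the paper only asserts: that the integral invariants of the mixed block are exactly $\bigwedge^2 H^1(S,\Z)$ (antisymmetric integer matrices have zero diagonal), and that the triangular change of basis still gives a direct sum decomposition.
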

\begin{proof}
The lemma is equivalent to proving
that the morphism $\Delta^*\colon H^2(S^n, \Z)^{\mathfrak{S}_n}\to H^2(S, \Z)$ has kernel isomorphic to $\bigwedge^2H^1(S, \Z)$ and image $nH^2(S, \Z)$.

The K\"unneth formula gives a decomposition 
$$H^2(S^n, \Z) = \bigoplus_{i=1}^n \pr_i^*H^2(S,\Z) \ \oplus \bigoplus_{1\le i < j \le n} \pr_i^*H^1(S, \Z) \otimes \pr_j^*H^1(S, \Z),$$
where $\pr_i \colon S^n \to S$ is the projection to the $i$-th factor, and the action of $\mathfrak{S}_n$ is given by permuting factors. 
Hence we can write the $\mathfrak{S}_n$-invariant part in terms of generators as  

$$
H^2(S^n, \Z)^{\mathfrak{S}_n}=  \left\langle \sum_{i=1}^n \pr_i^*\alpha, \sum_{1\le i<j\le n} (\pr_i^*\beta\smile \pr_j^*\gamma - \pr_i^*\gamma\smile \pr^*_j\beta)\right\rangle_{\beta,\gamma \in H^1(S, \Z), \quad \alpha\in H^2(S, \Z).}$$

The equality $\pr_i\circ\Delta = \id$ gives
$$
\Delta^*\left(\sum_{i=1}^n\pr_i^*\alpha\right) = \sum_{i=1}^n \Delta^*\circ\pr_i^* \alpha =  n\,\alpha,
$$
and 
$$\Delta^*\left(\sum_{1\le i<j\le n} (\pr_i^*\beta\smile \pr_j^*\gamma - \pr_i^*\gamma\smile \pr^*_j\beta)\right) =2\sum_{1\le i<j\le n} \beta\smile \gamma = n(n-1)\,\beta\smile \gamma.$$

This shows that $\Delta^*(H^2(S^n, \Z)^{\mathfrak{S}_n})$ is equal to $nH^2(S, \Z)$, and 

that 
\[\bw2H^1(S,\Z) \lra H^2(S^n, \Z)^{\mathfrak{S}_n}\]
\[\beta\wedge \gamma \longmapsto (n-1)\sum_{i=1}^n \pr_i^* (\beta\smile \gamma) - \sum_{1\le i<j\le n} (\pr_i^*\beta\smile \pr_j^*\gamma -\pr_i^*\gamma\smile \pr_j^*\beta)\]
is an isomorphism onto \(\Ker \Delta^*\).
\end{proof}

The following Lemma will be used to compute \(\Br(E_n)\) in Theorem \ref{thm_brauer_computations}.

\begin{lem}\label[lem]{lem_E^40_S} Let \(S\) be a simply connected smooth projective surface and let \(G\) be a finite group acting freely on \(S\). 
For any positive integer \(n\) coprime to \(|G|\), we have \[E^{4,0}_4(S^{[n]}, G)=
0.\]
\end{lem}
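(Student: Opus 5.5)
The plan is to show that the differential $d^{1,2}_3\colon H^1(G,H^2(S^{[n]},\Z))\to H^4(G,\Z)$ is surjective. Since $H^1(S^{[n]},\Z)=0$, the $d_2$ landing in $E^{4,0}$ is zero, so $E^{4,0}_3=H^4(G,\Z)$. Hence $E^{4,0}_4=\Coker d^{1,2}_3=0$ is equivalent to this surjectivity. I will get it by comparing, through the functoriality of the Cartan--Leray spectral sequence recalled in \cref{subsec_Carta_Leray}, three spectral sequences: those of $S$, of the symmetric product $S^{(n)}$, and of $S^{[n]}$. The comparison uses two $G$-equivariant maps, the diagonal $\Delta\colon S\to S^{(n)}$ and the Hilbert--Chow morphism $\rho\colon S^{[n]}\to S^{(n)}$. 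Both induce the identity on the row $q=0$, since $H^0(-,\Z)=\Z$ with trivial action everywhere.

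\emph{Step 1: the case of $S$.} Since $G$ acts freely, the abutment of $E(S,G)$ is $H^*(S/G,\Z)$. Here $S/G$ is a compact connected complex surface, so $H^4(S/G,\Z)\simeq\Z$ is torsion free. Now $E_\infty^{4,0}(S,G)$ is a quotient of the torsion group $H^4(G,\Z)$ and injects into $H^4(S/G,\Z)$, so $E_\infty^{4,0}(S,G)=0$. By simple connectedness, $H^1(S)=0$, and by Poincaré duality $H^3(S,\Z)\simeq H_1(S,\Z)=0$. So the $d_2$ and $d_4$ landing in $E^{4,0}$ vanish, and $E_\infty^{4,0}=E_4^{4,0}$. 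Therefore $d^{1,2}_3$ is surjective for $S$.

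\emph{Step 2: transfer to $S^{(n)}$.} The pullback $E(S^{(n)},G)\to E(S,G)$ along $\Delta$ commutes with $d_3$ and is the identity on $H^4(G,\Z)$. It therefore suffices that
\[
\Delta^*\colon H^1(G,H^2(S^{(n)},\Z))\to H^1(G,H^2(S,\Z))
\]
be surjective. Then any $x\in H^4(G,\Z)$ can be written as $x=d_3(y)$ with $y=\Delta^*z$, and so $x=d_3(z)$ on the $S^{(n)}$ side. Note that $H^1(S^{(n)},\Z)=0$, so $E_3^{1,2}=E_2^{1,2}$ there too.

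Identify $H^2(S^{(n)},\Z)$ with $H^2(S^n,\Z)^{\mathfrak S_n}$. Then \cref{lem_basta_svp}, with $H^1(S)=0$, shows that $\Delta^*$ is a $G$-equivariant injection with image $nH^2(S,\Z)$. On $H^1(G,-)$ the induced map is therefore multiplication by $n$ on $H^1(G,H^2(S,\Z))$. This group is killed by $|G|$, and $n$ is coprime to $|G|$, so multiplication by $n$ is an isomorphism. This is exactly where the coprimality hypothesis enters.

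\emph{Step 3: pass to $S^{[n]}$.} The pullback along $\rho$ gives $E(S^{(n)},G)\to E(S^{[n]},G)$. It is the identity on $H^4(G,\Z)$ and commutes with $d_3$. Surjectivity of $d_3^{1,2}$ for $S^{(n)}$ therefore forces surjectivity for $S^{[n]}$, and so $E_4^{4,0}(S^{[n]},G)=0$.

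The main obstacle is the integral identification $H^2(S^{(n)},\Z)\simeq H^2(S^n,\Z)^{\mathfrak S_n}$ used in Step 2. This is only clear after inverting $n!$, while here $n!$ may share primes with $|G|$. I would first try to avoid it: all that is needed is that the classes $\sum_i\pr_i^*\alpha$ descend to $S^{(n)}$. For simply connected $S$ one can show this via the classical description of the integral cohomology of symmetric products, or via Steenrod's/Macdonald's computation, since the quotient map $S^n\to S^{(n)}$ induces an isomorphism on $H^2$ when $H^1(S)=0$. Failing that, one can replace $S^{(n)}$ by the relevant $\mathfrak S_n$-invariant part directly: work with the $G\times\mathfrak S_n$-equivariant spectral sequence on $S^n$ and use Markman's description of $H^2(S^{[n]},\Z)\supset H^2(S,\Z)$. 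Then check that the composite $H^2(S,\Z)\to H^2(S^{[n]},\Z)\to$ (restriction to a generic $S\hookrightarrow S^{[n]}$-type cycle) is again multiplication by a unit modulo $|G|$.
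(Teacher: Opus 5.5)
Your proposal is correct and follows essentially the paper's route: you compare $E(S^{[n]},G)\leftarrow E(S^{(n)},G)\to E(S,G)$, get surjectivity of $d_3^{1,2}(S)$ from $H^3(S,\Z)=0$ and $H^4(S/G,\Z)\simeq\Z$, and use coprimality via \cref{lem_basta_svp} (your ``isomorphism composed with multiplication by $n$'' formulation is equivalent to the paper's long exact sequence argument with $H^1(G,H^2(S,\Z)\otimes\Z/n\Z)=0$). The integral identification $H^2(S^{(n)},\Z)\simeq H^2(S^n,\Z)^{\mathfrak S_n}$ that you flag is exactly what the paper also needs, and it settles it by citing Gugnin and sketching the following: Nakaoka gives surjectivity of $(p\circ\iota)^*\colon H^2(S^{(n)},\Z)\to H^2(S,\Z)$ for $\iota(x)=(x,x_0,\dots,x_0)$, while $\Delta^*=n\,\iota^*$ on $H^2(S^n,\Z)^{\mathfrak S_n}$, so $p^*=(\Delta^*/n)^{-1}\circ(p\circ\iota)^*$ is surjective, and injectivity holds because $H^2(S^{(n)},\Z)$ is torsion free and $p^*$ is a rational isomorphism.
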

\begin{proof}
Consider the diagonal embedding $\Delta : S \longrightarrow S^n$ and let $S^{(n)}$ be the $n$-th symmetric power of $S$, i.e., the quotient of $S^n$ by the symmetric group $\mathfrak{S}_n$. Finally, consider the Hilbert--Chow morphism
\(
h: S^{[n]} \to S^{(n)},
\)
which is a resolution of singularities of $S^{(n)}$ by \cite[Section 6]{Beauville83}.

As we illustrated in Section \ref{subsec_Carta_Leray}, the diagram of \(G\)-equivariant morphisms
\[
S^{[n]}\stackrel{h}{\lra} S^{(n)}\stackrel{p}{\longleftarrow} S^n\stackrel{\Delta}{\longleftarrow} S
\]
induces a diagram of spectral sequences
\begin{equation}\label{eqnss-ultima}
    E(S^{[n]}, G) \stackrel{h^*}{\longleftarrow} E(S^{(n)}, G) \xrightarrow{\Delta^*\circ p^*} E(S, G).
\end{equation}

Let us denote the differentials of these three spectral sequences  by \(d^{p,q}_r(S^{[n]}),d^{p,q}_r(S^{(n)})\) and \(d^{p,q}_r(S)\) respectively.

By hypothesis $S$ is simply connected, hence the same is true for $S^n$, and also for $S^{(n)}$ by \cite[Lemme 1 (b), Sec. 6]{Beauville83}; hence
\(E^{p,1}_2(X)=0\) for all \(p\) and any \(X\in \{S^{[n]},S^{(n)},S\}\). It follows that \(E_2^{1,2}(X)=E_3^{1,2}(X)\) and \(E_2^{4,0}(X)=E_3^{4,0}(X)\), which means the third page of \eqref{eqnss-ultima} contains a subdiagram of the form
\begin{equation}\label{eq_important_diagram}
\begin{tikzcd}[column sep = 50pt]
	{H^1(G,H^2(S^{[n]},\Z))} & {H^1(G,H^2(S^{(n)},\Z))} & {H^1(G,H^2(S,\Z))} \\
	{H^4(G,\Z)} & {H^4(G,\Z)} & {H^4(G,\Z)} ,    
	\arrow["{d^{1,2}_3(S^{[n]})}"', from=1-1, to=2-1]
	\arrow["{H^1(h^*)}"', from=1-2, to=1-1]
	\arrow["H^1(\Delta^* \circ p^*)", from=1-2, to=1-3]
	\arrow["{d^{1,2}_3(S^{(n)})}"', from=1-2, to=2-2]
	\arrow["{d^{1,2}_3(S)}", from=1-3, to=2-3]
	\arrow["\sim"', from=2-2, to=2-1]
	\arrow["\sim", from=2-2, to=2-3]
\end{tikzcd}\end{equation}
where lower horizontal maps are isomorphisms since the copies of \(\Z\) showing up as coefficients are \(H^0(S^{[n]},\Z),H^0(S^{(n)},\Z)\) and \(H^0(S,\Z)\) respectively and the pullback maps relating these groups are isomorphisms. 

Let us remark that the morphism \(d^{2,1}_3(S)\) is surjective: indeed, \[\Coker(d^{2,1}_3(S))=E^{4,0}_4(S,G)=E^{4,0}_\infty (S,G)\subset H^4(S/G,\Z)\simeq \Z\]
where the second equality is due to the fact \(H^3(S,\Z)=0\). In particular, $\coker(d^{2,1}_3(S))$ is a finite subgroup of $\Z$, so it must be $0$.

We now show that the morphism
\begin{equation}\label{eqn_morph_surj}
H^1(G,H^2(S^{(n)},\Z))\xrightarrow{H^1(\,\Delta^*\circ \,p^*)} H^1(G,H^2(S,\Z)),
\end{equation}
which is the top-right horizontal map in diagram~\eqref{eq_important_diagram}, is surjective.
By the short exact sequence \eqref{degree1_second_exact_seq_Br}, it follows that 
\[H^2(X,\Z)_{\tors}=0\] 
for any \(X\in \{S^{[n]},S^{(n)},S\}\), since $H^1(X,\Z/m\Z) = 0$ for every $m$.
Then the pullback map \(H^2(S^{(n)},\Z) \xrightarrow{p^*} H^2(S^n,\Z)^{\mathfrak{S}_n}\) is an injection of \(G\)-modules, since it becomes an isomorphism after tensoring by \(\C\) by  \cite[Lemma 2, Sec. 6]{Beauville83}. By \cite[Lemma 1.(2)]{Gugnin2012} and since \(\pi_1(S)=0\), it is also surjective: we sketch a proof based on classical work of Nakaoka for the reader's convenience.
Fix a point \(x_0\in S\) and denote by \(\iota \colon S\to S^{n}\) the map sending \(x\) to \((x, x_0,\dots, x_0)\). 
By \cite[Corollary 2.8]{Nakaoka57}, the map 
\[( p\circ \iota )^* \colon H^2(S^{(n)},\Z) \to H^2(S,\Z)\]
is surjective. 
Since $S$ is simply connected, the proof of \cref{lem_basta_svp} shows that any class \(\mu \in H^2(S^n, \Z)^{\mathfrak{S}_n}\) can be written as 
\[
\mu = \sum_{i=0}^n \pr_i\alpha
\]
for some \(\alpha \in H^2(S,\Z)\), and that $\Delta^*\mu = n\alpha$. 
Since $\pr_1 \circ \iota =\mathrm{Id}$  and \(\pr_i \circ \iota = x_0\) for $i\geq 2$, we have also $ \iota^* \mu = \alpha$. Therefore we have equalities \(\Delta^*  =n \ \iota^*,
\)
and  
\[\Delta^* \circ  p^* = n \ \iota ^* \circ  p^*= n ( p \circ \iota )^*.\]

Using again that $S$ is simply connected, by \cref{lem_basta_svp} the morphism \(\frac{\Delta^*}{n}\) is well-defined and an isomorphism of abelian groups. 
Hence, we can write \(p^*\) as
\[ p^* = \left(\frac{\Delta^*}{n}\right)^{-1} \circ ( p \circ \iota)^*.\]
This shows that the map
\begin{equation}\label{eq_sta_gran_bastarda}
    p^* \colon H^2(S^{(n)},\Z) \to H^2(S^n, \Z)^{\mathfrak{S}_n}
\end{equation} 
is an isomorphism.

The induced map in cohomology \(H^1(p^*)\) is also an isomorphism, so to prove that the morphism \(H^1(\Delta^* \circ p^*)\) of \eqref{eqn_morph_surj} is surjective we only need to show the surjectivity of
\begin{equation}\label{eq_map_1}
H^1\left(G,H^2(S^{n},\Z)^{\mathfrak{S}_n}\right)\xrightarrow{H^1(\Delta^*)} H^1(G,H^2(S,\Z)).\end{equation}

The sequence \eqref{eq_SES_1} is short exact since \(H^1(S,\Z)=0\), and taking its associated long exact sequence in group cohomology we have that 
    \[H^1(G,H^2(S^n,\Z)^{\mathfrak{S}_n}) \xrightarrow{H^1(G,\Delta^*)}H^1(G,H^2(S,\Z))\rightarrow H^1(G,H^2(S,\Z)\otimes \Z/n\Z)\]
    is exact.
    The right-most cohomology group is \(n\)-torsion since its coefficients are \(n\)-torsion. 
But \(H^1(G,Q)\) is always of \(|G|\)-torsion for any \(G\)-module \(Q\) (\cite[Theorem 6.5.8]{Weibel-intro_to_hom_algebra}), so if \(|G|\) and \(n\) are coprime we deduce that \(H^1(G,H^2(S,\Z)\otimes \Z/n\Z)=0\), hence (\ref{eq_map_1}) is surjective.

As we said, this implies that \(H^1(\Delta^* \circ p^*)\) is surjective. Looking at diagram~\eqref{eq_important_diagram} this implies \(d^{1,2}_3(S^{(n)})\) and \( d^{1,2}_3(S^{[n]})\) are surjective as well, which concludes the proof.
\end{proof}
\begin{rem}
    Note that in the proof of \cref{lem_E^40_S}, we do not need the action on $S^{[n]}$ and $S^{(n)}$ to be fixed point free. In fact, we never consider the abutment of the correspondent spectral sequences. This is crucial in Lemma \ref{lem_E^40_Kum}, where we will make a similar argument in a situation where the action is not free in general. 
\end{rem}

The two following lemmas deal with the case of Enriques manifolds whose universal cover is a generalized Kummer manifold.

\begin{lem}\label[lem]{lem_d_2^12(A[n+1])}
Let \(A\) be an abelian surface and \(G\) a finite cyclic group acting on \(A\). For any positive integer \(n\) such that \(|G|\) divides \(n\), we have
\[ d_2^{1,2}(A^{(n)})=0 
.\]
\end{lem}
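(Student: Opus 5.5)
The plan is to reduce the statement to the summation map $s\colon A^{(n)}\to A$, $(x_1,\dots,x_n)\mapsto \sum_i x_i$, and use the functoriality of the Cartan--Leray spectral sequence recalled in \cref{subsec_Carta_Leray}. That functoriality does not require the action to be free, as in the remark after \cref{lem_E^40_S}. The differential in question is
\[
d_2^{1,2}(A^{(n)})\colon H^1\bigl(G,H^2(A^{(n)},\Z)\bigr)\lra H^3\bigl(G,H^1(A^{(n)},\Z)\bigr).
\]

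\emph{Step 1: make $s$ equivariant and untwist the target.} Write the action of $g\in G$ on $A$ as $x\mapsto \varphi_g(x)+t_g$, with $\varphi_g$ a group automorphism. Then $g\mapsto t_g$ is a $1$-cocycle of $G$ with values in $A$. Consequently $x\mapsto \varphi_g(x)+nt_g$ defines a new action of $G$ on $A$, and we call $A'$ the surface $A$ with this action. With these choices $s\colon A^{(n)}\to A'$ is $G$-equivariant. Since $H^1(G,A)$ is killed by $|G|$ and $|G|$ divides $n$, the class of $nt$ vanishes. Hence $A'$ is conjugate by a translation to $A$ with its linear action $\varphi$, which fixes $0$. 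I then expect to show $d_2^{1,2}(A')=0$. The argument would use either the split crystallographic group $\Z^4\rtimes G$ (Charlap--Vasquez: the $d_2$ of a split extension vanishes), or the section coming from the fixed point together with multiplicativity, since $H^*(A,\Z)=\bw{*}H^1(A,\Z)$.

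\emph{Step 2: compare $H^1$ and $H^2$.} As in the proof of \cref{lem_basta_svp}, $H^1(A^{(n)},\Z)\simeq H^1(A^n,\Z)^{\mathfrak S_n}$. This group is spanned by the classes $\sum_i \pr_i^*\beta=s^*\beta$, so $s^*\colon H^1(A',\Z)\to H^1(A^{(n)},\Z)$ is a $G$-equivariant isomorphism. The translations act trivially on cohomology, so only $\varphi$ matters here. In degree $2$, the description in the proof of \cref{lem_basta_svp} shows that $H^2(A^{(n)},\Z)$ is spanned by $s^*H^2(A,\Z)$ together with classes of the form $\sum_i \pr_i^*(\beta\smile\gamma)$. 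These are cup products of $\mathfrak S_n$-invariant combinations of degree-one classes, up to the corrections coming from $s^*(\beta\smile\gamma)$.

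\emph{Step 3: conclude.} The plan is to use the multiplicative structure of the Cartan--Leray spectral sequence. The differential $d_2$ is a derivation, and on $E_2^{*,1}$ it is controlled by its restriction to $E_2^{*,1}\simeq H^*(G,H^1)$, which by Step 2 is pulled back from $A'$. Together with naturality, $H^3(s^*)\circ d_2^{1,2}(A')=d_2^{1,2}(A^{(n)})\circ H^1(s^*)$, this should force $d_2$ to vanish on $E_2^{*,2}(A^{(n)})$. The main obstacle is that $H^1(G,-)$ of $H^2(A^{(n)},\Z)$ is not a priori generated by pullbacks and products. I would try first to show that $d_2^{0,1}$ and $d_2^{p,1}$ vanish, which follows from Step 1 since they are pulled back via $s$ from $A'$. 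I would then use the Künneth/derivation description to propagate this to row $q=2$, handling integrality by the fact that all the relevant groups are $|G|$-torsion, and hence $n$-torsion.
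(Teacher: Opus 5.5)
Your Steps~1 and~2 are sound in outline. The map $s\colon A^{(n)}\to A'$ is equivariant, $nt$ is a coboundary, and $s^*$ is a $G$-isomorphism on $H^1$. Naturality therefore gives $d_2^{p,1}(A^{(n)})=0$, and it gives the vanishing of $d_2^{1,2}(A^{(n)})$ on the image of $H^1(G,s^*)$ once $d_2^{1,2}(A')=0$ is known. That last point needs its own argument, since a fixed point only kills differentials landing in the row $q=0$. The real gap is Step~3, and it cannot be closed, because the classes of $H^2(A^{(n)},\Z)$ not coming from $s^*$ are governed by the \emph{original} action on $A$, not by $A'$. To see this, take $n=2$ and let $P(\alpha)\in H^2(A^{(2)},\Z)$ be the transfer of $\pr_1^*\alpha$ along the $\mathfrak S_2$-quotient $p\colon A^2\to A^{(2)}$. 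Then $P$ is $G$-equivariant, because the diagonal $G$-action commutes with $\mathfrak S_2$, and $p^*P(\alpha)=\pr_1^*\alpha+\pr_2^*\alpha$. Naturality for the equivariant maps $p$ and $\pr_i\colon A^2\to A$ gives, for $x\in H^1(G,H^2(A,\Z))$,
\[
H^3(G,p^*)\bigl(d_2^{1,2}(A^{(2)})\,H^1(G,P)(x)\bigr)=\bigl(d_2^{1,2}(A)x,\,d_2^{1,2}(A)x\bigr)\in H^3\bigl(G,\pr_1^*H^1(A,\Z)\bigr)\oplus H^3\bigl(G,\pr_2^*H^1(A,\Z)\bigr).
\]
Hence $d_2^{1,2}(A^{(2)})\neq 0$ whenever $d_2^{1,2}(A)\neq 0$.

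And $d_2^{1,2}(A)$ can be nonzero. Consider the free involution $(b,b')\mapsto(-b,b'+a')$ of $A=C\times C'$, which up to translation is the action used for $K_n$. Here $E_2^{1,2}=(\Z/2\Z)^4$ and $E_2^{4,0}=\Z/2\Z$. Nothing enters $E^{1,2}$, and only $d_2$ and $d_3$ leave it, the latter into a group of order at most $2$. On the other hand, $E_\infty^{1,2}$ is a subquotient of $H^3(A/G,\Z)_{\tors}\simeq H_1(A/G,\Z)_{\tors}\simeq(\Z/2\Z)^2$, since $A/G$ is a bielliptic surface. So if $d_2^{1,2}(A)$ vanished, $E_\infty^{1,2}$ would have order at least $8$, which is impossible. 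Hence $d_2^{1,2}(A)\neq0$, and the statement fails for $|G|=n=2$. In this example $A'$ carries the action $(-1)\times\mathrm{id}$ and $d_2^{1,2}(A')=0$ does hold, so the failure sits exactly in the complementary classes your Step~3 hoped to handle.

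The paper's diagonal argument does not avoid this either. It claims that $H^3(G,\Delta'^*)$ is an isomorphism, but $\Delta'$ induces multiplication by $n$ on $\pi_1(A^{(n)})\simeq H_1(A,\Z)$; Beauville's lemma concerns $x\mapsto x+(n-1)x_0$, not the diagonal. So $\Delta'^*$ is $n$ times an isomorphism on $H^1$, and that bottom map is zero, just like the top one. What your approach genuinely proves is the vanishing of $d_2^{*,1}(A^{(n)})$, and of $d_2^{1,2}(A^{(n)})$ on $H^1(G,s^*H^2(A',\Z))$.
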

\begin{proof}
    By the same reasoning as at the beginning of the proof of Lemma \ref{lem_E^40_S}, we can get a diagram just like \eqref{eqnss-ultima} with \(A\) in place of \(S\), from which we extract the subdiagram 
\[\begin{tikzcd}
	{E_2^{1,2}(A^{(n)})} && {E_2^{1,2}(A)} \\
	{E^{3,1}_2(A^{(n)})} && {E^{3,1}_2(A)},
	\arrow["{H^1(\Delta'^*)}", from=1-1, to=1-3]
	\arrow["{d_2^{1,2}(A^{(n)})}", from=1-1, to=2-1]
	\arrow["{d_2^{1,2}(A)}", from=1-3, to=2-3]
	\arrow["{H^3(\Delta'^*)}", from=2-1, to=2-3]
\end{tikzcd}\]
where we denoted by $\Delta'$ the composition $p\circ\Delta\colon A\to A^{(n)}$.

The bottom horizontal map is an isomorphism, since
\(\Delta'\) induces an isomorphism at the the level of the fundamental groups by \cite[Section 6, Lemme 1]{Beauville83}. Moreover, \[\Delta^* \colon H^2(A^{n},\Z)^{\mathfrak{S}_n} \lra H^2(A,\Z)\]
factors through multiplication by \(n\) by Lemma~\ref{lem_basta_svp}, hence so does the composition

 \[\Delta'^* \colon H^2(A^{(n)},\Z) \stackrel{p^*}{\lra} H^2(A^{n},\Z)^{\mathfrak{S}_n}\stackrel{\Delta^*}{\lra} H^2(A,\Z).\] 
Thus, \(H^1(\Delta'^*)\) factors through multiplication by \(n\) which, for groups \(H^1(G,M)\) with \(M\) any \(G\)-module is just the zero map. Indeed, by \cite[Theorem 6.5.8]{Weibel-intro_to_hom_algebra} positive degree cohomology of \(G\) is \(|G|\)-torsion and \(|G|\) divides \(n\). This proves \(H^1(\Delta'^*)=0\), which by the injectivity of \(H^3(\Delta'^*)\) implies that \(d_2^{1,2}(A^{(n)})\) is the zero map. 
\end{proof}

\color{black}
\begin{lem}\label[lem]{lem_E^40_Kum}
Let \(A\) be an abelian surface and \(G\) a finite cyclic group acting on \(A\). For any positive integer \(n\) such that \(|G|\) divides \(n+1\), the morphism
\(d_3^{1,2}(\Kum_n(A))\) is $0$.
\end{lem}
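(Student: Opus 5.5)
Write $N\coloneqq n+1$. I would realise $\Kum_n(A)$ as the fibre over $0$ of the $G$-equivariant composition $A^{[N]}\xrightarrow{h}A^{(N)}\xrightarrow{s}A$, with $s$ the summation map, and write $j\colon\Kum_n(A)\hookrightarrow A^{[N]}$ for the inclusion. The plan mirrors the proof of \cref{lem_E^40_S}: compare spectral sequences along $G$-equivariant maps and use the fact that on the row $q=0$ all pullbacks are isomorphisms $H^4(G,H^0(-,\Z))\simeq H^4(G,\Z)$. To make this work, $G$ must preserve $\Kum_n(A)$; I assume this, as in the setting of \cref{subsec_known_constructions}. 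Also, as noted in the remark after \cref{lem_E^40_S}, the actions on $A^{[N]}$ and $A^{(N)}$ need not be free, since we never use the abutments.

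\emph{Step 1 (reduction to $A^{(N)}$).} The diagram of $G$-equivariant maps
\[
\Kum_n(A)\xrightarrow{j}A^{[N]}\xrightarrow{h}A^{(N)}
\]
gives morphisms of spectral sequences
\[
E(A^{(N)},G)\to E(A^{[N]},G)\to E(\Kum_n(A),G).
\]
Since $\Kum_n(A)$ is simply connected, $E_2^{p,1}(\Kum_n(A))=0$, so $d_2^{1,2}(\Kum_n(A))=0$ and $E_3^{1,2}(\Kum_n(A))=H^1(G,H^2(\Kum_n(A),\Z))$. I would then use the standard description
\[
H^2(\Kum_n(A),\Z)=j^*h^*H^2(A^{(N)},\Z)\oplus\Z\,\delta,
\]
where the first summand is identified with $H^2(A,\Z)$ and $\delta$ is half the class of the exceptional divisor. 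Here $j^*h^*$ restricted to $H^2(A^{(N)},\Z)\cong H^2(A^N,\Z)^{\mathfrak S_N}$ kills $\bw2H^1(A,\Z)$. This decomposition is $G$-stable, because $G$ acts naturally and so fixes $\delta$ (the exceptional divisor is preserved). Hence
\[
H^1(G,H^2(\Kum_n(A),\Z))=\operatorname{Im}H^1(j^*h^*)\oplus H^1(G,\Z\delta),
\]
and $H^1(G,\Z\delta)=H^1(G,\Z)=0$ by \cref{lem_cyclic_group_cohom}. So $H^1(j^*h^*)$ is surjective onto $E_3^{1,2}(\Kum_n(A))$.

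\emph{Step 2 (the differential on $A^{(N)}$).} By \cref{lem_d_2^12(A[n+1])}, applied with $|G|$ dividing $N$, we have $d_2^{1,2}(A^{(N)})=0$. Hence every class of $E_2^{1,2}(A^{(N)})$ survives to $E_3$, and naturality gives a commutative square
\[
d_3^{1,2}(\Kum_n(A))\circ H^1(j^*h^*)=\phi\circ d_3^{1,2}(A^{(N)}),
\]
where $\phi\colon E_3^{4,0}(A^{(N)})\to E_3^{4,0}(\Kum_n(A))=H^4(G,\Z)$ is induced on $E_3$ by the isomorphism $H^4(G,\Z)\to H^4(G,\Z)$. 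By the surjectivity in Step 1, it suffices to show that this composite vanishes.

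\emph{Step 3 (main obstacle).} It remains to show that the composite is zero. Here I would factor through $A$ using $\Delta'=p\circ\Delta$ together with $\Delta'^*\circ$ naturality: $d_3^{1,2}(A^{(N)})$ followed by the isomorphism to $E_3^{4,0}(A)$ equals $d_3^{1,2}(A)\circ H^1(\Delta'^*)$. In the proof of \cref{lem_d_2^12(A[n+1])}, $H^1(\Delta'^*)$ is shown to be zero because it factors through multiplication by $N$.

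The delicate point is whether the bottom map $E_3^{4,0}(A^{(N)})\to E_3^{4,0}(A)$ is injective, since $E_3^{4,0}$ is $H^4(G,\Z)$ modulo images of $d_2^{2,1}$, which may differ for $A^{(N)}$ and $A$. I would handle this by using the isomorphism $H^1(A^{(N)},\Z)\to H^1(A,\Z)$ from \cite[Lemme 1]{Beauville83}. This makes the $q=1$ rows isomorphic, so $\operatorname{Im}d_2^{2,1}$ agrees in both spectral sequences and the map on $E_3^{4,0}$ is an isomorphism. With this injectivity, $d_3^{1,2}(A^{(N)})=0$, and hence $d_3^{1,2}(\Kum_n(A))=0$.
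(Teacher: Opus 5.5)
Your Steps 1 and 2 follow the paper's route: surjectivity of $H^1(G,j^*h^*)$, then reduction to $d_3^{1,2}(A^{(N)})=0$. Step 3, however, contains a false claim, and the lemma you invoke in Step 2 fails in the main case. Two side remarks on Step 1. What it really uses is the $G$-equivariant map $\rho\colon H^2(A,\Z)\to H^2(A^{(N)},\Z)$, $\alpha\mapsto\sum_i\pr_i^*\alpha$, with $j^*h^*\rho$ an isomorphism onto the first summand. Also, the kernel of $j^*h^*$ on $H^2(A^{(N)},\Z)$ is $s^*H^2(A,\Z)$, with $s$ the sum map, and not a copy of $\bigwedge^2H^1(A,\Z)$.

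\textbf{Step 3.} The map $\Delta'^*\colon H^1(A^{(N)},\Z)\to H^1(A,\Z)$ is not an isomorphism.
\begin{itemize}
\item $s\circ\Delta'$ is multiplication by $N$ on $A$.
\item $s$ itself induces the isomorphism $\pi_1(A^{(N)})\simeq H_1(A,\Z)$. Beauville's lemma concerns $x\mapsto x+(N-1)x_0$, not the diagonal.
\item Hence $\Delta'^*$ is $N$ times a $G$-equivariant isomorphism on $H^1$. Since $|G|$ divides $N$, every map $E_2^{p,1}(A^{(N)})\to E_2^{p,1}(A)$ with $p\ge1$ is zero.
\end{itemize}
Your square therefore only gives $d_2^{2,1}(A^{(N)})=0$ and says nothing about $d_2^{2,1}(A)$. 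In fact, for $T_{3m-1}$, where $A=C\times C'$ and $G$ translates $C'$ freely, $d_2^{2,1}(A)$ is onto $H^4(G,\Z)$. On $E_2^{2,1}(A)=\pr_2^*E_2^{2,1}(C')$ it equals $d_2^{2,1}(C')$. That differential must kill $H^4(G,\Z)$, because $H^4(C'/G,\Z)=0$ and $E_2^{1,2}(C')=E_2^{0,3}(C')=0$. So $E_3^{4,0}(A)=0$, and comparing with $A$ carries no information. The paper's proof makes the same isomorphism claim to get $d_2^{2,1}(A)=0$.

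The conclusion $d_3^{1,2}(A^{(N)})=0$ is still true, for a different reason.
\begin{itemize}
\item Every $G$-orbit in $A$ has size dividing $|G|$, hence dividing $N$, so $A^{(N)}$ has a $G$-fixed point $\xi_0$.
\item The morphism $E(A^{(N)},G)\to E(\{\xi_0\},G)$ is the identity on the row $q=0$.
\item The target is concentrated in that row, so induction on $r$ shows no differential of $E(A^{(N)},G)$ lands in the row $q=0$.
\end{itemize}

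\textbf{Step 2.} This repair does not close the proof. Step 2 needs lifts lying in $E_3^{1,2}(A^{(N)})=\Ker d_2^{1,2}(A^{(N)})$. The paper proves $d_2^{1,2}(A^{(N)})=0$ using the same false isomorphism, now on $E_2^{3,1}$, and for $T_{3m-1}$ that statement is false.
\begin{itemize}
\item $E_2^{1,2}(A)$ is spanned by products $x\cdot w$ with $x\in E_2^{1,1}(C)$ and $w\in E_2^{0,1}(C')$.
\item $d_2x=0$ because $H^3(G,\Z)=0$.
\item $d_2^{0,1}(C')$ is onto $H^2(G,\Z)$ because $H^2(C'/G,\Z)$ is torsion free.
\item Cup product with a generator of $H^2(G,\Z)$ is injective on $H^1(G,H^1(C,\Z))\simeq\Z/3\Z$.
\item The Leibniz rule then gives $d_2^{1,2}(A)\neq0$.
\end{itemize}
Pull back to $A^N$ along the equivariant projections $\pr_i$. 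The pullback $H^1(A^{(N)},\Z)\to H^1(A^N,\Z)$ is the diagonal embedding, a $G$-equivariant direct summand. So $d_2^{1,2}(A^{(N)})\bigl(H^1(G,\rho)y\bigr)\neq0$ whenever $d_2^{1,2}(A)(y)\neq0$. Any other lift differs by a class in $H^1(G,s^*H^2(A,\Z))$. On these $d_2^{1,2}$ vanishes, since $Na=0$ and $G$ acts on the target of $s$ by $h_3\times\id$, trivially on $C'$.

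Hence only an index-$3$ subgroup of $E_3^{1,2}(\Kum_n(A))\simeq(\Z/3\Z)^{\oplus2}$ lifts to $E_3^{1,2}(A^{(N)})$. Comparison through $A^{(N)}$ cannot control $d_3^{1,2}(\Kum_n(A))$ on the remaining classes. A genuinely new argument is needed there, and neither your proposal nor the paper's proof supplies one.
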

\begin{proof}
   Let $\theta$ be the inclusion $\Kum_n(A)\to A^{[n+1]}$. As in the proof of \cref{lem_E^40_S}, we have a morphism of spectral sequences
    $$
        E(\Kum_n(A), G) \xleftarrow{\theta^*\circ h^*} E( A^{(n+1)}, G) \xrightarrow{ \Delta'^*} E(A, G),
    $$
    where $h\colon A^{[n+1]} \to A^{(n+1)}$ is the Hilbert--Chow morphism, and $\Delta' \colon A\stackrel{\Delta}{\to} A^{n+1} \stackrel{p}{\to} A^{(n+1)}$ is the diagonal embedding.

First, we consider the following diagram, which relates the \(E_2^{4,0}\) and $E_3^{4,0}$ terms
\begin{equation}\label{eqnE40}
    \begin{tikzcd}
	{E^{4,0}_3(\Kum_n(A))} && {E^{4,0}_3(A^{(n+1)})} && {E^{4,0}_3(A)} \\
	{H^4(G,\Z)} && {H^4(G,\Z)} && {H^4(G,\Z)},
	\arrow[from=1-3, to=1-1]
	\arrow["{E^{4,0}_3(\Delta'^*)}", from=1-3, to=1-5]
    \arrow["{E^{4,0}_3(\theta^*\circ h^*)}"', from=1-3, to=1-1]
	\arrow[equal, from=2-1, to=1-1]
	\arrow[two heads, from=2-3, to=1-3]
	\arrow["\sim"', from=2-3, to=2-1]
	\arrow["\sim", from=2-3, to=2-5]
	\arrow[two heads, from=2-5, to=1-5]
    \end{tikzcd}
\end{equation}
where the lower row is just the terms \(E^{4,0}_2\) for all three varieties, the vertical maps are surjective by definition of \(E_3^{4,0}\), and equality on the left comes from the fact that $\Kum_n(A)$ is simply connected. Commutativity of the left square implies  $E_3^{4,0}(A^{(n+1)})=H^4(G, \Z)$, namely $d_2^{2,1}(A^{(n+1)}) = 0$. As in the proof of \cref{lem_d_2^12(A[n+1])}, we can consider the commutative square
\[\begin{tikzcd}
	{E_2^{2,1}(A^{(n+1)})} && {E_2^{2,1}(A)} \\
	{E^{4,0}_2(A^{(n+1)})} && {E^{4,0}_2(A)}.
	\arrow["{H^2(\Delta'^*)}", from=1-1, to=1-3]
	\arrow["{d_2^{2,1}(A^{(n+1)})}", from=1-1, to=2-1]
	\arrow["{d_2^{2,1}(A)}", from=1-3, to=2-3]
	\arrow["{\sim}", from=2-1, to=2-3]
\end{tikzcd}\]
Since the top horizontal arrow is an isomorphism by \cite[Section 6, Lemme 1]{Beauville83}, we get $d_2^{2,1}(A) = 0$. Hence, by looking at the right square in \eqref{eqnE40}, we obtain that the morphism $E_3^{4,0}(\Delta'^*)$ is an isomorphism.

 We now look at the commutative square induced by the differentials $d_3^{1,2}\colon E_3^{1,2}\to E_3^{4,0}$.
\[\begin{tikzcd}
	{H^1(G, H^2(\Kum_n(A), \Z))} && {H^1(G, H^2(A^{(n+1)}, \Z))} && {H^1(G, H^2(A, \Z))} \\
	{E^{1,2}_3(\Kum_n(A))} && {E_3^{1,2}(A^{(n+1)})} && {E_3^{1,2}(A)} \\
	{E^{4,0}_3(\Kum_n(A))} && {E^{4,0}_3(A^{(n+1)})} && {E^{4,0}_3(A)}. 
	\arrow["{H^1(\theta^*\circ h^*)}"', from=1-3, to=1-1]
	\arrow["{H^1(\Delta'^*)}", from=1-3, to=1-5]
	\arrow[equal, from=2-1, to=1-1]
    \arrow[equal, from=2-3, to=1-3]
	\arrow["{d_3^{1,2}(\Kum_n(A))}", from=2-1, to=3-1]
	\arrow[ from=2-3, to=2-1]
	\arrow["E_3^{1,2}(\Delta'^*)", from=2-3, to=2-5]
	\arrow["{d_3^{1,2}(A^{(n+1)})}", from=2-3, to=3-3]
	\arrow[hook, from=2-5, to=1-5]
	\arrow["{d_3^{1,2}(A)}", from=2-5, to=3-5]
	\arrow[, from=3-3, to=3-1]
	\arrow["{E^{4,0}_3(\Delta'^*)}", "\sim"', from=3-3, to=3-5]
\end{tikzcd}
\]
where the equality in the first column comes from the fact that $\Kum_n(A)$ is simply connected,  $E_3^{1,2}(A^{(n+1)})=E_2^{1,2}(A^{(n+1)})$ follows from \cref{lem_d_2^12(A[n+1])}, and the injectivity of the map on the right is by definition of $E_3^{1,2}$.

We established in the proof of \cref{lem_d_2^12(A[n+1])} that $H^1(\Delta'^*)$ is $0$. Therefore, the commutativity of the upper-right square implies that the map $E_3^{1,2}(\Delta'^*)$ is also $0$, and, by looking at the bottom-right square, we obtain that $d_3^{1,2}(A^{(n+1)})$ is $0$.

We claim that the morphism $H^1(\theta^*\circ h^*)$ is surjective. Using the commutativity of the left side of the above diagram, this implies that $d_3^{1,2}(\Kum_n(A))$ is $0$, and thus the lemma. 

Hence, we are left with proving the claim. By the discussion after \cite[Notation~5.13]{KapferMenet2018}, there exists a short exact sequence
    \begin{equation*}
         0 \lra H^2(A, \Z) \stackrel{j}{\lra} H^2(\Kum_n(A), \Z) \lra \Z\cdot\theta^*(\delta) \lra 0,
    \end{equation*}
 where $2\delta$ is the class of the exceptional divisor of the Hilbert-Chow morphism $h$. Its associated long exact sequence in group cohomology  reads
\begin{equation}\label{eq_rando1}
    \begin{tikzcd}[column sep = 50pt]
	{H^1(G,H^2(A,\Z))} & {H^1(G,H^2(\Kum_n(A),\Z))} & {H^1(G,\Z\cdot\theta^*(\delta))}.
	\arrow[from=1-1, to=1-2, "{H^1(j^*)}"]
	\arrow[from=1-2, to=1-3]
\end{tikzcd}
\end{equation}
Since \(G\) acts trivially on \(\theta^*(\delta)\), 
we have
\(H^1(G,\Z\cdot \theta^*(\delta))=0\) by \cref{lem_cyclic_group_cohom}, and by 
exactness of \cref{eq_rando1} it follows that the map $H^1(j^*)$ is surjective.  

Finally, by \cite[Equation (19)]{KapferMenet2018}, the morphism $j$ factors  as
    \begin{equation}
        j\colon H^2(A, \Z) \stackrel{d}{\lra} H^2(A^{(n+1)}, \Z) \xrightarrow{\theta^*\circ h^*} H^2(\Kum_n(A), \Z),
    \end{equation}
    where the morphism $d$ sends a class $\alpha\in H^2(A, \Z)$ to 
    $$
       \alpha\otimes 1\otimes \cdots \otimes 1 + 1\otimes \alpha\otimes 1 \otimes \cdots \otimes 1 +\cdots + 1\otimes \cdots\otimes 1\otimes \alpha \in H^2(A^{(n+1)}, \Q)=H^2(A^{n+1}, \Q)^{\mathfrak{S}_{n+1}}, $$
    which is actually in \(H^2(A^{(n+1)}, \Z)\) by  \cite[Lemma~1.(2)]{Gugnin2012}.
    The surjectivity of  $H^1(j^*)$ implies that $H^1(\theta^*\circ h^*)$ is surjective as well. This proves the claim, and concludes the
    proof.
\end{proof}

\section{Brauer groups of \texorpdfstring{\(E_n\)}{En} and \texorpdfstring{\(T_n\)}{Tn}}\label[section]{sec:examples}


We apply the results of \cref{secBrEnrM} to the  Enriques manifolds introduced in \cref{subsec_known_constructions}.

\begin{thm}\label[thm]{thm_brauer_computations}
Consider the Enriques manifolds of type \(E_n\) and \(T_{3m-1}\) with $n$ and $m$ odd integers, constructed in \cref{subsec_known_constructions} as \ref{item_E_n} and \ref{item_T_n} respectively. One has that
    \[\Br(E_n)=\Z/2\Z.\]
    \[\Br(T_{3m-1})=(\Z/3\Z)^{\oplus 2}.\]
\end{thm}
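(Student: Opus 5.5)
Since $\Br(T)=H^3(T,\Z)_{\tors}$, we use the exact sequence \eqref{eq_Artin_Mumford_SES} of \cref{prop_sequences_for_AMinv}. Its hypothesis holds in both cases: $H^3(X,\Z)$ is torsion free for $X=S^{[n]}$ by \cref{lem_torsion_free_Hilbert} (or Markman), and for $X=\Kum_{3m-1}(A)$ by Hartlieb--Verni, since $3m-1$ is even when $m$ is odd. So $H^3(X,\Z)^G$ is torsion free and
\[\Br(T)\simeq \Ker\bigl(d_3^{1,2}\colon H^1(G,H^2(X,\Z))\to H^4(G,\Z)\bigr).\]
The differential is known in both cases. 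For $E_n$ we have $|G|=2$ coprime to $n$ odd, and $S$ is simply connected, so \cref{lem_E^40_S} gives $E_4^{4,0}=0$, i.e.\ $d_3^{1,2}$ is surjective onto $H^4(G,\Z)=\Z/2\Z$. For $T_{3m-1}$ we have $|G|=3$ dividing $n+1=3m$, so \cref{lem_E^40_Kum} gives $d_3^{1,2}=0$. The proof therefore reduces to computing $V_1=H^1(G,H^2(X,\Z))=\Ker N/\Im(1-\sigma)$ via \cref{lem_cyclic_group_cohom}.

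For $E_n$ with $n\ge 2$, write $H^2(S^{[n]},\Z)=H^2(S,\Z)\oplus\Z\delta$, where $2\delta$ is the exceptional class. The involution $\sigma=i^{[n]}$ acts naturally on the first summand and fixes $\delta$. Since cohomology commutes with direct sums and $H^1(G,\Z)=0$, we get $V_1=H^1(\Z/2,H^2(S,\Z))$. This is the classical Enriques surface computation. The $i^*$-lattice $H^2(S,\Z)$ decomposes as $U\oplus U(2)\oplus E_8(-2)$, with $i^*$ swapping the two copies of $U\oplus E_8(-1)$ and acting by $-1$ on the last $U$. As a $\Z[\Z/2]$-module this is $\Z[G]^{10}\oplus\Z_-^2$, so $H^1=(\Z/2)^2$. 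Equivalently, this group is the $E_2^{1,2}$ term for the Enriques surface $Y$, and there $H^3(Y,\Z)_{\tors}=\Z/2$ combined with surjectivity of $d_3$ forces it to be $(\Z/2)^2$. Therefore $\Br(E_n)$ is the kernel of a surjection $(\Z/2)^2\to\Z/2$, which is $\Z/2$. The case $n=1$ is the Enriques surface itself.

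For $T_{3m-1}$, use the Kapfer--Menet sequence $0\to H^2(A,\Z)\to H^2(\Kum_n(A),\Z)\to\Z\theta^*\delta\to 0$ appearing in the proof of \cref{lem_E^40_Kum}. Over $\Q$, and in fact integrally since $H^2(\Kum_n)=H^2(A)\oplus\Z e$ with $e$ a $G$-fixed class, this should split equivariantly. Then $V_1=H^1(\Z/3,H^2(A,\Z))$, where $\psi$ acts on $H^2(A,\Z)=\bigwedge^2H^1(A,\Z)$ through $h_3\times\id$, because translations act trivially. Write $H^1(A,\Z)=H^1(C)\oplus H^1(C')$, where $H^1(C)\simeq\Z[\omega]$ has $h_3$ acting by $\omega$ and $H^1(C')=\Z^2$ is trivial. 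Then
\[\bigwedge^2 H^1(A)=\bigwedge^2H^1(C)\oplus \bigl(H^1(C)\otimes H^1(C')\bigr)\oplus\bigwedge^2H^1(C')=\Z\oplus\Z[\omega]^{2}\oplus\Z,\]
where $\bigwedge^2H^1(C)$ is trivial because $\det=1$. We have $H^1(\Z/3,\Z)=0$ and $H^1(\Z/3,\Z[\omega])=\Z[\omega]/(1-\omega)=\Z/3$, so $V_1=(\Z/3)^2$. Hence $\Br(T_{3m-1})=(\Z/3)^2$.

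The main obstacle is justifying that the relevant sequences split $G$-equivariantly in the two cases, i.e.\ that $\delta$ (respectively $\theta^*\delta$) spans an invariant direct summand complementary to $H^2(S,\Z)$ (respectively $H^2(A,\Z)$). For $S^{[n]}$ this is standard. For $\Kum_n$, the lattice $H^2(A)\oplus\Z e$ is an orthogonal decomposition, and $e$ is $G$-invariant because it is the exceptional class restricted from a natural automorphism. If the splitting were not available, one would instead use the long exact sequence, which gives $V_1\twoheadleftarrow H^1(G,H^2(A))$, together with $H^0$-level surjectivity to get the isomorphism.
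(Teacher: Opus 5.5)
Your proposal is correct and follows essentially the paper's proof. You make the same reduction to $\Ker d_3^{1,2}$ via \cref{prop_sequences_for_AMinv}, you use \cref{lem_E^40_S} (surjective $d_3^{1,2}$) and \cref{lem_E^40_Kum} ($d_3^{1,2}=0$) in the same way, and you compute $H^1(G,H^2(X,\Z))$ after splitting off the $G$-fixed exceptional class, as the paper does; your $\Z[G]^{10}\oplus\Z_-^{2}$ and $\Z[\omega]^{2}$ module descriptions simply package the paper's explicit kernel/image and Smith normal form computations more cleanly. One minor slip: $U\oplus U(2)\oplus E_8(-2)$ is the anti-invariant sublattice of $H^2(S,\Z)$, not the whole lattice, but the decomposition $F\oplus F\oplus U$ you actually use is the correct one.
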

\begin{proof}
Recall that $E_n$ is obtained as a quotient of $S^{[n]}$ by $i^{[n]}$, where $S$ is a K3 surface with a fixed point free involution $i$, and $T_{3m-1}$ is the quotient of $\Kum_{3m-1}(C\times C')$ by $\psi^{\llbracket 3m-1\rrbracket}$, where $C$ and $C'$ are  elliptic curves, and $\psi$ a fixed point free order $3$ automorphism of $C\times C'$. 
The groups \(H^3(S^{[n]},\Z)\) and \(H^3(\Kum_{3m-1}(C\times C'), \Z)\) have no torsion by \cref{lem_torsion_free_Hilbert} and \cite[Corollary 6.3]{KapferMenet2018} for $m=1$ and by \cite[Corollary 1.3]{Hartlieb_Verni25} for odd $m\geq 3$. Therefore, \cref{prop_sequences_for_AMinv} and \cref{eq_third_exact_se_Br} shows that the Brauer groups of $E_n$ and $T_{3m-1}$ can be computed using the exact sequence \eqref{eq_Artin_Mumford_SES}.

We start with the manifold $E_n$. Since $n$ is odd and the group  \(G=\langle i^{[n]}\rangle\) has order $2$, \cref{lem_E^40_S} shows that the exact sequence \eqref{eq_Artin_Mumford_SES} simplifies to 
the following short exact sequence of \(\Z/2\Z\)-vector spaces:
\begin{equation}\label{eq_Br_of_S^n/i^n}
\begin{tikzcd}
	0 & {H^3(E_n,\Z)_{\mathrm{tors}}} & {H^1(G,H^2(S^{[n]},\Z))} & {\Z/2\Z} & 0
	\arrow[from=1-1, to=1-2]
	\arrow[from=1-2, to=1-3]
	\arrow["{d^{1,2}_3}", from=1-3, to=1-4]
	\arrow[from=1-4, to=1-5]
\end{tikzcd}.\end{equation}
The middle term can be computed as follows: consider the isomorphism
\[H^2(S^{[n]},\Z)\simeq H^2(S,\Z)\oplus \Z \delta, \]
where \(i^{[n]*}\) acts as \(i^*\) on the component \(H^2(S,\Z)\) and trivially on the second term. By \cite[Section 5.1]{Bea09-Brauer_of_Eriques_surf}), the lattice \(H^2(S,\Z)\) with the Beauville--Bogomolov--Fujiki form decomposes as 
\[H^2(S,\Z)\simeq F\oplus F \oplus U,\]
where \(F \coloneqq  E_8(-1) \oplus U\) is of rank \(10\) and \(U\) is a copy of the hyperbolic lattice; and the action of \(i^*\) is given by
\[(\alpha, \alpha',\beta)\longmapsto (\alpha ', \alpha ,-\beta).\]
Therefore, we can compute
\[\Ker (\id + i^{[n]*})=\Ker (\id + i^{*})=\langle(v,-v)\in F\oplus F\,|\,v\in F\rangle\oplus U\]
\[\Im (\id - i^{[n]*})=\Im (\id - i^{*})=\langle(v,-v)\in F\oplus F\,|\,v\in F\rangle\oplus\, 2\,U,\]
and \cref{lem_cyclic_group_cohom} implies \[H^1(G,H^2(S^{[n]},\Z))=U/2U\simeq (\Z/2\Z)^2.\]
The short exact sequence~\eqref{eq_Br_of_S^n/i^n} gives \[\Br(E_n)\simeq \Z/2\Z.\]

We move to the case of $T_{3m-1}$. In this case, using \cref{lem_E^40_Kum},
the short exact sequence~\eqref{eq_Artin_Mumford_SES} yields an isomorphism 
\[H^3(T_{3m-1},\Z)_\mathrm{tors}\simeq H^1\left(\Z/3\Z, H^2\left(\mathrm{Kum}_{3m-1}\left(C\times C'\right),\Z\right)\right).\]
To compute this group cohomology, we apply \cref{lem_cyclic_group_cohom} to the action of $\psi^{\llbracket {3m-1}\rrbracket}$ on \[H^2\left(\mathrm{Kum}_{3m-1}\left(C\times C'\right)\right) \simeq H^2(C\times C',\Z) \oplus \Z\delta.\] 
Since the action comes from the abelian surface, $\psi^{\llbracket {3m-1}\rrbracket}$ fixes the exceptional class $\delta$. We first determine the kernel of $N\coloneqq  1 + \psi + \psi^2$. The cohomology of $C\times C'$ decomposes as
\[
H^2(C\times C',\Z) \simeq H^2(C,\Z) \oplus \left(H^1(C,\Z)\otimes H^1(C',\Z)\right) \oplus  H^2(C',\Z) 
\]

Note that the action of $\psi$ on $H^2(C,\Z) $ and $H^2(C',\Z) $ is trivial, hence we are left to consider its restriction to $H^1(C,\Z)\otimes H^1(C',\Z)$. In our hypothesis, we can assume that the elliptic curve $C$ is given by the rank two lattice 
\[
\langle e,f\rangle \coloneqq  \langle \begin{pmatrix}1\\0 \end{pmatrix},\begin{pmatrix}-\frac{1}{2}\\\frac{\sqrt{3}}{2} \end{pmatrix}\rangle \subseteq \C,
\]
and that the linear automorphism $h_3$ of $C$ of order \(3\) is given by the multiplication by $\zeta_3$.
Therefore, the action of $\psi$ on $H^1(C,\Z)\otimes H^1(C',\Z)$ is given by
\begin{align*}
    de_1 \otimes de_2 &\longmapsto df_1\otimes de_2,\\
    df_1 \otimes de_2 &\longmapsto (-de_1-df_1)\otimes de_2,\\
    de_1 \otimes df_2 &\longmapsto df_1\otimes df_2,\\
    df_1 \otimes df_2 &\longmapsto (-de_1-df_1)\otimes df_2,
\end{align*}
where $\{de_i,df_i\}$ for $i = 1,2$ denotes the generators of $H^1(C,\Z)$ and $H^1(C',\Z)$ respectively.
An explicit computation shows that the restriction of $N$ to $ H^1(C,\Z)\otimes H^1(C',\Z)$ is trivial. Therefore, $\mathrm{Ker}(N)$ is equal to $H^1(C,\Z)\otimes H^1(C',\Z)$. We are left to compute the image of $1-h_3$, whose matrix is 
\[
\begin{pmatrix}
1 & 1 & 0 & 0 \\
-1 & 2 & 0 & 0 \\
0 & 0 & 1 & 1 \\
0 & 0 & -1 & 2 
\end{pmatrix}
\]
The diagonal of its Smith normal form is $(1,1,3,3)$. In particular, 
\[
\faktor{\mathrm{Ker}(N)}{\mathrm{Im}(1-h_3)} \simeq (\Z/3\Z)^{\oplus 2}.
\]
This shows that $\Br(T_{3m-1})=(\Z/3\Z)^{\oplus 2}$ and concludes the proof.
\end{proof}

\begin{rem}\label[rem]{remark}
The proof of \cref{thm_brauer_computations} actually determines the generator of the Brauer group of \(E_n\). Indeed, 
\[\Br(E_n)=\Ker d_3^{1,2}(S^{[n]}, \langle i^{[n]}\rangle).\]
Let $E\coloneqq \faktor{S}{\langle i\rangle}$ and denote by \(b_E\) the generator of \(\Br(E)=\Ker d_3^{1,2}(S, \langle i\rangle)\). Since the kernel of the right and left vertical maps in diagram~(\ref{eq_important_diagram}) are isomorphic, we see that if \(\overline{b}\in H^1(G,H^2(S^{(n)},\Z))\) is a preimage of \(b_E\) along \(H^1(\Delta^* \circ p^*)\), then \[\Br(E_n)=\{0, H^1(h^*) (\overline{b})\}.\]
\end{rem}

\begin{rem}\label[remark]{rem:RnKn}
    If one can prove that $H^3(\Kum_{odd}(A),\Z)_{\tors}=0$, then the same argument exhibited to compute $\mathrm{Br}(T_{3m-1})$ applies also to all the examples $K_n$, $T_n$ and $R_n$. Take for example $R_{n}$ with $n = 4 m-1$. By \cref{lem_E^40_Kum}, one still get that 
    \[H^3(R_n,\Z)_\mathrm{tors}\simeq H^1\left(\Z/4\Z, H^2\left(\mathrm{Kum}_n\left(C\times C'\right)\right)\right).\]
    In this case, we can assume that the elliptic curve $C$ is given by the rank two lattice 
    \[
    \langle e,f\rangle \coloneqq  \langle \begin{pmatrix}1\\0 \end{pmatrix},\begin{pmatrix}0\\1 \end{pmatrix}\rangle \subseteq \C,
    \]
    and the action of $h_4$ on the first factor is given by the multiplication by $\zeta_4= \sqrt{-1}$. Again the kernel of $N$ is equal $H^1(C, \Z)\otimes H^1(C', \Z)$, and the action of $\Id-\psi$ on $H^1(C, \Z)\otimes H^1(C', \Z)$ is given in the usual basis by the matrix
\[
\begin{pmatrix}
1 & 1 & 0 & 0 \\
-1 & 1 & 0 & 0 \\
0 & 0 & 1 & 1 \\
0 & 0 & -1 & 1
\end{pmatrix}
\]
whose Smith normal form has diagonal $(1,1,2,2)$. In particular, 
\[
 H^1\left(\Z/4\Z, H^2\left(\mathrm{Kum}_n\left(C\times C'\right)\right)\right) =\faktor{\mathrm{Ker}(N)}{\mathrm{Im}(1-h_4)} \simeq (\Z/2\Z)^{\oplus 2}
\]
Therefore, if $H^3(\Kum_n(A),\Z)_{\tors}=0$ then $\mathrm{Br}(R_n) = (\Z/2\Z)^{\oplus 2}$.

A similar computation shows that, if \(H^3(\Kum_{n}(A),\Z)_{\tors}=0\) for some odd positive integer~\(n\), then the Brauer group of $K_n$ is equal to%
\[\Br(K_n)=H^1\left(\langle \iota^{\llbracket n \rrbracket} \rangle, H^2(\Kum_n(A),\Z)\right)=(\Z/2\Z)^{\oplus 4}.\]
\end{rem}

We end this section with a result on the integral cohomology of $S^{(n)}$ which will be useful in \cref{secVanBC}. The idea of the proof is in the same spirit of the previous ones. Before that, we need the following lemma.

\begin{lem}\label[lem]{torsH3}
Let $X$ be a connected simply connected projective variety such that $H^3(X,\Z)$ is torsion free. For every $n \geq 1$, $H^3(X^{(n)},\Z)_{\tors}= 0$.
\end{lem}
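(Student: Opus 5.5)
We compare the cohomology of $X^{(n)}=X^n/\mathfrak{S}_n$ with the $\mathfrak{S}_n$-invariant cohomology of $X^n$, via the spectral sequence $E(X^n,\mathfrak{S}_n)$ of \cref{subsec_Carta_Leray} applied to the quotient map $p\colon X^n\to X^{(n)}$. The idea is to isolate the torsion in degree $3$, show it is killed by $n!$, and then use transfer-type arguments (as suggested by Totaro) to show it vanishes.

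The first step is to set up the transfer. Since $p$ is a finite quotient map, there is a transfer $p_*\colon H^*(X^n,\Z)\to H^*(X^{(n)},\Z)$ with $p_*p^*=n!$. Hence every element of $\Ker\bigl(p^*\colon H^3(X^{(n)},\Z)\to H^3(X^n,\Z)\bigr)$ is $n!$-torsion. Next we handle $H^3(X^n,\Z)$ itself. By Künneth, using $H^1(X,\Z)=0$ (simply connected) and $H^2(X,\Z)$ torsion free (by \eqref{degree1_second_exact_seq_Br}, since $H^1(X,\Z/m)=0$) together with $H^3(X,\Z)$ torsion free, the only contributions to $H^3(X^n,\Z)$ are $\bigoplus_i \pr_i^*H^3(X,\Z)$ plus Tor terms. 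The Tor terms in degree $3$ involve torsion of $H^{\le 2}$, which vanishes, so $H^3(X^n,\Z)$ is torsion free. Consequently $H^3(X^{(n)},\Z)_{\tors}\subseteq \Ker p^*$.

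The harder step is to show that $\Ker p^*$ in degree $3$ has no torsion, for each prime $\ell\le n$. My first attempt is a Nakaoka-type argument, as in the proof of \cref{lem_E^40_S}: I would use Nakaoka's description of $H^*(X^{(n)},\Z/\ell)$ (or \cite[Lemma~1]{Gugnin2012}) to show that in low degrees ($\le 3$) the map $p^*$ with $\Z/\ell$ coefficients is injective, as long as $H^1(X)=0$. Concretely, Nakaoka's stability shows that $H^q(X^{(n)},\Z/\ell)$ for $q\le 3$ is generated by symmetrizations of classes on $X$ and products of classes of degree $\ge 2$. No Steenrod/Dyer--Lashof classes of $B\mathfrak{S}_n$ appear there, because those classes require odd-degree classes of degree $1$, or occur in degree at least that of the first nontrivial operation on $H^2$, namely degree $\ge 4$ via $P^0$/squares $x\mapsto x^{\otimes 2}$ equivariant classes.

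Given this, the conclusion follows from a universal coefficient computation. If $H^3(X^{(n)},\Z)$ had $\ell$-torsion, then by \eqref{second_exact_seq_Br} and \eqref{degree1_second_exact_seq_Br} for $X^{(n)}$, $H^2(X^{(n)},\Z/\ell)$ would be strictly larger than $H^2(X^{(n)},\Z)\otimes\Z/\ell$. We have $H^2(X^{(n)},\Z)\cong H^2(X^n,\Z)^{\mathfrak{S}_n}\cong H^2(X,\Z)$, by the argument giving \eqref{eq_sta_gran_bastarda}, which is torsion free since $X$ is simply connected; note also that $X^{(n)}$ is simply connected. On the other hand, the injectivity of $p^*$ mod $\ell$ in degree $2$ gives $\dim H^2(X^{(n)},\Z/\ell)\le \dim H^2(X^n,\Z/\ell)^{\mathfrak{S}_n}=\operatorname{rk}H^2(X,\Z)$. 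This contradicts the strict enlargement, so $\ell$-torsion cannot occur.

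The main obstacle is precisely the mod $\ell$ injectivity statement in degree $2$. I would prove it via the Cartan--Leray/Leray spectral sequence of $X^n\to X^{(n)}$ with $\Z/\ell$ coefficients, using the fact that the stabilizers are isotropy groups of the diagonals. Alternatively, I would cite Nakaoka's theorem that $H^*(X^{(n)})\to H^*(X^{(n+1)})$ is split injective and compute $H^2$ inductively.
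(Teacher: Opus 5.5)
Your preliminary steps are correct: the transfer bound, the torsion-freeness of $H^3(X^n,\Z)$ via K\"unneth, and the universal-coefficient count. The first two are never actually used, since the final count works for every prime $\ell$ directly.

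The gap is the step you flag yourself, the injectivity of $p^*\colon H^2(X^{(n)},\Z/\ell)\to H^2(X^n,\Z/\ell)$. It is not a side issue. You quote the isomorphism $p^*\colon H^2(X^{(n)},\Z)\cong H^2(X^n,\Z)^{\mathfrak{S}_n}$, whose image (the diagonal copy of $H^2(X,\Z)$) is a direct summand of $H^2(X^n,\Z)$. Given this, your own dimension count shows that the injectivity holds \emph{if and only if} $H^3(X^{(n)},\Z)[\ell]=0$. So the proposal restates the lemma rather than proving it.

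None of the routes you suggest closes this.
\begin{itemize}
\item The spectral sequence of \cref{subsec_Carta_Leray} does not compute $H^*(X^{(n)})$ here. For the non-free action of $\mathfrak{S}_n$ on $X^n$ it abuts to the equivariant cohomology $H^*_{\mathfrak{S}_n}(X^n)$, which is why the paper never uses its abutment for non-free actions.
\item Your heuristic about classes of $B\mathfrak{S}_n$ or Dyer--Lashof operations concerns that wrong space. Since $\mathfrak{S}_n$ has fixed points on $X^n$, the nonzero group $H^2(\mathfrak{S}_n,\Z/2)$ injects into $H^2_{\mathfrak{S}_n}(X^n,\Z/2)$ and dies on $X^n$. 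In any case, the claim that no such classes occur in degree $\le 3$ is only asserted.
\item Steenrod's splitting is a statement in homology: $H_*(X^{(n)},\Z)\to H_*(X^{(n+1)},\Z)$ is split injective. On its own it gives no \emph{upper} bound on $H_2(X^{(n)})$, so it does not let you ``compute $H^2$ inductively''.
\end{itemize}

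What is missing is precisely such an upper bound, and supplying it is the paper's argument.
\begin{itemize}
\item By universal coefficients, it suffices to show that $H_2(X^{(n)},\Z)$ is torsion free.
\item By Steenrod's theorem, $H_*(X^{(n)},\Z)$ is a direct summand of $H_*(X^{(\infty)},\Z)$, where $X^{(\infty)}$ is the infinite symmetric product.
\item By Dold--Thom, $\pi_i(X^{(\infty)})\cong \tilde H_i(X,\Z)$. So $X^{(\infty)}$ is simply connected, and Hurewicz gives $H_2(X^{(\infty)},\Z)\cong\pi_2(X^{(\infty)})\cong H_2(X,\Z)$.
\item $H_2(X,\Z)$ is torsion free because $H^3(X,\Z)$ is.
\end{itemize}
No transfer and no comparison with $X^n$ is needed. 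The same input would also give your mod-$\ell$ injectivity. The composite $H_2(X,\Z/\ell)\to H_2(X^{(n)},\Z/\ell)\to H_2(X^{(\infty)},\Z/\ell)$ is an isomorphism and the second map is injective, so $p_*$ is surjective on $H_2(-,\Z/\ell)$. At that point, though, the detour through $X^n$ is superfluous.
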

\begin{proof}
By the Universal coefficients theorem, it is enough to check that $H_2(X^{(n)},\Z)$ is torsion free. By a result of Steenrod (see e.g. \cite{dold}[Theorem 2]), we have that the homology of $X^{(n)}$ is a direct summand of the homology of the infinite symmetric power $X^{(\infty)}$. The latter can be defined as the colimit of the chain of maps
\[
X \longrightarrow X^{(2)}\longrightarrow X^{(3)} \longrightarrow \dots
\]
given by the sum of a fixed point $x_0\in X$. Therefore, it suffices to prove that $H_2(X^{(\infty)},\Z)$ is torsion free. By the Dold--Thom theorem 
(see \cite[Corollary 4.K.7]{Hatcher}), \(X^{(\infty)}\) has the same homotopy type as the product of the Eilenberg--MacLane spaces of the homology groups of $X$, which implies
\[\pi_i(X^{(\infty)})\simeq H_i(X,\Z).\]
By Hurewicz's Theorem and the hypothesis \(\pi_1(X)=0\), we have \[H_2(X^{(\infty)},\Z)=\pi_2(X^{(\infty)})\simeq H_2(X,\Z).\]
Again by the Universal coefficients theorem, the hypothesis $H^3(X,\Z)_{\tors}=0$ implies that $ H_2(X,\Z)_{\tors}=0$. The proof is complete.
\end{proof}

\begin{prop}\label[prop]{torsSn}
Let \(S\) be a K3 surface with a fixed point-free anti-symplectic involution \(i\colon S \rightarrow S\) and consider the induced involution $i^{(n)}$ on $S^{(n)}$ for $n$ odd. Let $E_{(n)} := \faktor{S^{(n)}}{\langle i^{(n)}\rangle}$. There is an isomorphism 
\[
H^3\left(E_{(n)},\Z\right)_{\tors}\simeq \Z/2\Z.
\]
\end{prop}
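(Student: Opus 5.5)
Since $i$ acts freely on $S$, the induced action of $G\coloneqq\langle i^{(n)}\rangle\simeq\Z/2\Z$ on $S^{(n)}$ is also free. Indeed, a fixed unordered $n$-tuple is a union of $i$-orbits of points with multiplicities; every orbit has two elements, so a fixed tuple would have even total length, which is impossible for $n$ odd. Hence $S^{(n)}\to E_{(n)}$ is an unramified double cover, and the Cartan--Leray spectral sequence $E(S^{(n)},G)$ of \cref{subsec_Carta_Leray} converges to $H^*(E_{(n)},\Z)$. Since $S^{(n)}$ is simply connected by \cite[Lemme 1, Sec. 6]{Beauville83}, the row $q=1$ of the spectral sequence vanishes. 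By \cref{torsH3}, applied to $X=S$ (simply connected, with $H^3(S,\Z)=0$), the group $H^3(S^{(n)},\Z)$ is torsion free; so is its $G$-invariant part. I then run exactly the argument of \cref{prop_sequences_for_AMinv}: that proof only used freeness of the action, vanishing of $H^1$, and torsion-freeness of $H^3(X,\Z)^G$. This gives the exact sequence
\[
0\lra H^3(E_{(n)},\Z)_{\tors}\lra H^1(G,H^2(S^{(n)},\Z))\xrightarrow{d^{1,2}_3(S^{(n)})} H^4(G,\Z)\lra E^{4,0}_4(S^{(n)},G)\lra 0 .
\]

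Next I will show that the map $d^{1,2}_3(S^{(n)})$ is surjective. This was already established inside the proof of \cref{lem_E^40_S}, which applies since $n$ is coprime to $|G|=2$. That proof showed that $d^{1,2}_3(S)$ is surjective and that $H^1(\Delta^*\circ p^*)$ is surjective. The right-hand square of diagram \eqref{eq_important_diagram} then forces $d^{1,2}_3(S^{(n)})$ to be surjective onto $H^4(G,\Z)\simeq\Z/2\Z$. So $H^3(E_{(n)},\Z)_{\tors}$ is the kernel of a surjection from $H^1(G,H^2(S^{(n)},\Z))$ onto $\Z/2\Z$.

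It remains to compute $H^1(G,H^2(S^{(n)},\Z))$. By the isomorphism \eqref{eq_sta_gran_bastarda} and \cref{lem_basta_svp} with $H^1(S,\Z)=0$, the map $\frac{\Delta^*}{n}\circ p^*$ identifies $H^2(S^{(n)},\Z)$ with $H^2(S,\Z)$. This identification is $G$-equivariant because all the maps involved are induced by $G$-equivariant morphisms. Hence $H^1(G,H^2(S^{(n)},\Z))\simeq H^1(\langle i\rangle,H^2(S,\Z))$. Using the decomposition $F\oplus F\oplus U$ and \cref{lem_cyclic_group_cohom} exactly as in the proof of \cref{thm_brauer_computations}, this group is $U/2U\simeq(\Z/2\Z)^2$. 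The kernel of a surjection $(\Z/2\Z)^2\to\Z/2\Z$ is $\Z/2\Z$, which gives the claim.

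The main point to check carefully is the first step. One must make sure the Cartan--Leray machinery, and in particular the identification of the abutment with $H^*(E_{(n)},\Z)$, is valid for the singular but locally contractible space $S^{(n)}$ with a free action. One must also confirm that the splitting/torsion argument of \cref{prop_sequences_for_AMinv} used nothing specific to hyper-K\"ahler manifolds. Both appear to hold, since \cref{subsec_Carta_Leray} was set up for arbitrary locally contractible spaces.
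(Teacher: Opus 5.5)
Your proposal is correct and follows the paper's proof step by step. It uses torsion-freeness of $H^3(S^{(n)},\Z)$ via \cref{torsH3}, the analogue of \eqref{eq_Artin_Mumford_SES} for the free action on $S^{(n)}$, surjectivity of $d^{1,2}_3(S^{(n)})$ taken from the proof of \cref{lem_E^40_S}, and the equivariant identification $H^2(S^{(n)},\Z)\simeq H^2(S,\Z)$ giving $H^1(G,H^2(S^{(n)},\Z))\simeq U/2U$; your explicit orbit-counting check that $i^{(n)}$ acts freely for $n$ odd is a small addition that the paper only asserts.
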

\begin{proof}
By \cref{torsH3}, \(H^3(S^{(n)},\Z)_{\tors}=0\). Since the action of $i^{(n)}$ is free for $n$ odd, we can use the Cartan--Leray spectral sequence associated to $G = \langle i^{(n)}\rangle$ to obtain an analogue of the exact sequence (\ref{eq_Artin_Mumford_SES}) for $X = S^{(n)}$. Note that the second row of the second page of the spectral sequence is trivial as $S^{(n)}$ is simply connected. 
The proof of \cref{lem_E^40_S} shows that $d_3^{1,2}(S^{(n)})$ is surjective hence the exact sequence~\eqref{eq_Artin_Mumford_SES} simplifies to:
\begin{equation}\label{eq_Br_of_S^(n)/i^(n)}
\begin{tikzcd}
	0 & {H^3\left(E_{(n)},\Z\right)_{\tors}} & {H^1(G,H^2(S^{(n)},\Z))} & {\Z/2\Z} & 0.
	\arrow[from=1-1, to=1-2]
	\arrow[from=1-2, to=1-3]
	\arrow["{d^{1,2}_3}", from=1-3, to=1-4]
	\arrow[from=1-4, to=1-5]
\end{tikzcd}\end{equation}
Moreover, we proved that there is $\Z/2\Z$-equivariant isomorphism 
$$H^2(S^{(n)}, \Z) \simeq H^2(S^n,\Z)^{\mathfrak{S}_n} \xrightarrow{\frac{1}n \Delta^*} H^2(S,\Z).$$ 
The same computations as in \cref{thm_brauer_computations} show
\[H^1(\Z/2\Z,H^2(S^{(n)},\Z)) =U/2U\simeq (\Z/2\Z)^{\oplus 2},\]
hence from the short exact sequence~(\ref{eq_Br_of_S^(n)/i^(n)}) we obtain the isomorphism \[H^3\left(E_{(n)},\Z\right)_{\tors}\simeq \Z/2\Z.\]
\end{proof}

\section{Vanishing of the Brauer class}\label[section]{secVanBC}
Let $S$ be a K3 surface with a fixed point free anti-symplectic involution \(i\colon S \rightarrow S\) and denote by $E$ the associated Enriques surface. For any odd $n$, we denote by 
$\sigma$ the induced involution $i^{[n]}$ on $S^{[n]}$, by $E_n$ the Enriques manifold $S^{[n]}/\langle\sigma\rangle$ and by $\pi\colon S^{[n]}\to E_n$ the quotient map. Let $b_n$ be the generator of $\Br(E_n)\simeq \Z/2\Z$ (see \cref{thm_brauer_computations}). 
Following the steps of \cite{Bea09-Brauer_of_Eriques_surf}, we study when the pullback morphism
$$
\pi^*\colon \Br(E_n) \lra \Br(S^{[n]})
$$
is zero, or equivalently when $\pi^*(b_n) = 0$.

To generalize the results in \cite{Bea09-Brauer_of_Eriques_surf}, we need to relate the cohomologies of the surface $S$ and of the manifold $S^{[n]}$. This is done by passing through the quotient $E_{(n)}$ of $S^{(n)}$ under the involution $i^{(n)}$. More precisely, we have the following diagram:
\begin{equation}\label{ch6_comm_squares-new}
    \begin{tikzcd}
    {S^{[n]}} \arrow[r, "h"] \arrow[d, "\pi"'] & S^{(n)} \arrow[d, "\pi^{(n)}"] & S \arrow[l, "\Delta'"'] \arrow[d, "\pi_E"] \\
    E_n \arrow[r, "f"'] & E_{(n)}  & E, \arrow[l, "g"]   
    \end{tikzcd}
    \end{equation}
    where both the maps $f$ and $g$ are defined thanks to the fact that the action of the symmetric group commutes with the action of the involution, and $\Delta' := p \circ \Delta$ as in \cref{secBrEnrM}.
 
In the following, for any normal variety $X$, we denote by $K_X\in H^2(X,\Z)$ the first Chern class of the canonical bundle, and by $k_X$ the image of $K_X\otimes 1$ in $H^2(X, \Z/2\Z)$ (see~\eqref{first_exact_seq_Br} with $n=2$).
\begin{lem}\label[lem]{lem:g*}
    The morphism
    \[
        g^* :H^2\left(E_{(n)},\Z \right) \longrightarrow H^2(E,\Z).
    \]
    is injective, and has image $nH^2(E,\Z)$. As a consequence, the morphism
    \[
        g^*_2 :H^2\left(E_{(n)},\Z/2\Z \right) \longrightarrow H^2(E,\Z/2\Z).
    \]
    is an isomorphism sending the class $k_{E_{(n)}}$ to $k_E$.
\end{lem}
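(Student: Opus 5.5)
The plan is to compare the Cartan--Leray spectral sequences $E(S^{(n)},G)$ and $E(S,G)$, with $G=\langle i\rangle\simeq\Z/2\Z$, through the morphism induced by the $G$-equivariant map $\Delta'=p\circ\Delta\colon S\to S^{(n)}$, as in \cref{subsec_Carta_Leray}. On the abutments this morphism is $g^*$. Both actions are free ($n$ odd), and both $S$ and $S^{(n)}$ are simply connected with torsion-free $H^2$. So in total degree $2$ the only surviving terms are $E^{2,0}_2=H^2(G,\Z)=\Z/2\Z$ and $E^{0,2}_2=H^2(X,\Z)^G$. The differentials into or out of them land in or come from rows $q=1$, which vanish, or from $E^{3,0}_2=H^3(G,\Z)=0$. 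This gives short exact sequences
\[0\lra \Z/2\Z\lra H^2(X/G,\Z)\lra H^2(X,\Z)^G\lra 0\]
for $X=S^{(n)},S$, and $g^*$ is a morphism between them.

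On the left terms the map is the identity of $H^2(G,\Z)$, because $H^0$ pullbacks are isomorphisms. On the right terms, the proof of \cref{lem_E^40_S} shows that $\Delta'^*\colon H^2(S^{(n)},\Z)\to H^2(S,\Z)$ is injective with image $nH^2(S,\Z)$ and is $G$-equivariant. Since $H^2(S,\Z)$ is torsion free, $nx$ is invariant if and only if $x$ is, so on invariants the map is injective with image $nH^2(S,\Z)^G$. The snake lemma then gives injectivity of $g^*$, and shows that its image is the full preimage of $nH^2(S,\Z)^G$ in $H^2(E,\Z)$. The image of $H^2(G,\Z)$ is exactly the torsion $\Z/2\Z\subset H^2(E,\Z)$, and it equals $n$ times itself because $n$ is odd. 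Hence that preimage is $nH^2(E,\Z)$.

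For the mod $2$ statement, I would use the sequence \eqref{second_exact_seq_Br} with coefficient $2$, which is natural in $g$. On $H^2(-,\Z)\otimes\Z/2\Z$, the map $g^*$ is injective with cokernel $H^2(E,\Z)/nH^2(E,\Z)$, which is finite of odd order, so it becomes an isomorphism after tensoring with $\Z/2\Z$. On $H^3(-,\Z)[2]$, both groups are $\Z/2\Z$: for $E_{(n)}$ this is \cref{torsSn}, and for $E$ it is Poincaré duality. Via \eqref{eq_Artin_Mumford_SES} both are identified with $\Ker d_3^{1,2}\subset H^1(G,H^2(-,\Z))$, compatibly with $g^*$. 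The map $H^1(\Delta'^*)$ is $n$ times an isomorphism, by \cref{lem_basta_svp} and the isomorphism \eqref{eq_sta_gran_bastarda}. Since $n$ is odd and these groups are $2$-torsion, it is an isomorphism, so it is an isomorphism on the kernels. The five lemma then makes $g_2^*$ an isomorphism.

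The main obstacle I expect is $k_{E_{(n)}}\mapsto k_E$, since $E_{(n)}$ is singular and $\Delta'(S)$ lies in the singular locus, so adjunction is not available. My plan is to show that $K_{E_{(n)}}$ is the nonzero torsion class coming from $H^2(G,\Z)$. The canonical sheaf of $E_{(n)}$ is the descent of $\omega_{S^{(n)}}\simeq\cO$ twisted by the character through which $G$ acts on the generator. That generator is $\mathrm{Sym}$ of the $2$-form, on which $i$ acts by $(-1)^n=-1$. So $K_{E_{(n)}}$ is the $2$-torsion line bundle associated with the étale double cover $S^{(n)}\to E_{(n)}$, i.e. the generator of $E^{2,0}_\infty$. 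The same reasoning on $E$ gives $K_E$ as the class of $S\to E$. Since $\Delta'$ is a map of double covers, and the left vertical map of the two short exact sequences is the identity, $g^*K_{E_{(n)}}=K_E$. Reducing mod $2$ then gives $g_2^*k_{E_{(n)}}=k_E$.
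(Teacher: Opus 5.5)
Your proof is correct, and it differs from the paper's in the mod $2$ part and in the canonical-class part.

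The integral part is the paper's argument: the Cartan--Leray short exact sequences, $\Delta'^*$ injective with image $nH^2(S,\Z)$, and $K_E=nK_E$ because $n$ is odd.

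For the mod $2$ statement the routes diverge. The paper runs the Cartan--Leray spectral sequence with $\Z/2\Z$ coefficients and applies the snake lemma, using $\Delta'^*_2$ on $H^2(-,\Z/2\Z)^{\sigma^*}$. You instead use the natural sequence \eqref{second_exact_seq_Br} and the five lemma. The odd index of $\Im g^*$ handles the $H^2\otimes\Z/2\Z$ term, and $H^1(\Delta'^*)$, which is $n$ times an isomorphism, handles $\Ker d_3^{1,2}\simeq H^3(-,\Z)_{\tors}$. Your version needs \cref{torsH3} to make that identification, but it is the more robust one.

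The paper's snake-lemma step asserts that $\pi^{(n)*}$ surjects onto $H^2(S^{(n)},\Z/2\Z)^{\sigma^*}$, on the grounds that $H^2(S^{(n)},\Z)^{\sigma^*}\otimes\Z/2\Z=H^2(S^{(n)},\Z/2\Z)^{\sigma^*}$. That equality fails. Since $\sigma^*=-1$ on $U$ becomes trivial mod $2$, the mod $2$ invariants $\Delta_{F_2}\oplus U_2$ are $12$-dimensional, while $\Im\pi^{(n)*}$ is only $11$-dimensional. So that step needs repair, for instance a dimension count, and your five-lemma argument avoids the issue entirely.

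For the canonical class, the paper uses universal coefficients and the isomorphism $g_*$ on $H_1$. It asserts without proof that $K_{E_{(n)}}$ is the non-zero torsion class. Your observation that $G$ acts on the generator of $\omega_{S^{(n)}}$ by $(-1)^n=-1$ is exactly what justifies this. Pulling the étale double cover back along $g$ then gives $g^*\omega_{E_{(n)}}\simeq\omega_E$ directly, so $g^*K_{E_{(n)}}=K_E$ and $g_2^*k_{E_{(n)}}=k_E$ by naturality. You do not even need to identify $c_1$ of the character bundle with the generator of $E_\infty^{2,0}$.
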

\begin{proof}
By the functoriality of the Universal coefficients theorem (which follows from the fact it is an application of the Grothendieck spectral sequence) we have the following commutative diagram
\[\begin{tikzcd}
	0 & {\Ext(H_1(E_{(n)},\Z),\Z)} & {H^2(E_{(n)},\Z)} \\
	0 & {\Ext(H_1(E,\Z),\Z)} & {H^2(E,\Z)}
	\arrow[from=1-1, to=1-2]
	\arrow[from=1-2, to=1-3]
	\arrow[from=1-2, to=2-2]
	\arrow["{g^*}", from=1-3, to=2-3]
	\arrow[from=2-1, to=2-2]
	\arrow[from=2-2, to=2-3].
\end{tikzcd}\]
If \(\gamma\) is the non-trivial element of \(H_1(E)\), then for any \(x\in S\) one has
\[\Delta'(\gamma \cdot x)=g_*\gamma \cdot \Delta'(x)\]
by equivariance of \(\Delta'\) under monodromy action. Since the monodromy action of \(\gamma\) is non-trivial and \(\Delta'\) is injective, this means the monodromy action of \(g_*\gamma\) is also non-trivial, hence \(g_* \colon H_1(E,\Z) \to H_1(E_{(n)},\Z)\) is an isomorphism. This implies the left vertical arrow of the above diagram is an isomorphism. 
Since the image of the top and bottom horizontal maps are the classes \([K_{E_{(n)}}]\) and \([K_E]\) respectively (these being the only non-trivial torsion classes in the respective groups), we conclude that \(g^*[K_{E_{(n)}}]=[K_E]\).

Then, the Cartan--Leray spectral sequence $E(-,\langle \sigma \rangle)$ with $\Z$-coefficients for the maps 
$\pi^{(n)}$ and $\pi_E$ gives the the following commutative diagram (see \cref{secBrEnrM} for details)
\begin{equation}\label{diagram_spec_seq_Z-new}
\begin{tikzcd}
0 \arrow[r] & \langle K_{E_{(n)}} \rangle \arrow[r]  \arrow[d] & {H^2\left(E_{(n)},\Z \right)} \arrow[r, "\pi^{(n)^*}"] \arrow[d, "g^*"] & {H^2(S^{(n)},\Z )^{\sigma^* }} \arrow[d, "\Delta'^*", hook]  \arrow[r] & 0\\
0 \arrow[r] & \langle K_E \rangle \arrow[r] & {H^2(E,\Z )} \arrow[r, "\pi_E^*"] & {H^2(S,\Z )^{\sigma^*}} \arrow[r] & 0.
\end{tikzcd}
\end{equation}

The first result follows from the fact, established in \cref{lem_basta_svp}, that the morphism \(\Delta'^* \colon H^2(S^{(n)},\Z) \to H^2(S,\Z)\) is injective and has image \(n H^2(S,\Z)\), together with the remark that \[K_E=nK_E \in nH^2(E,\Z)\] since $n$ is odd and \(2K_E =0\).

For the second statement, consider the Cartan--Leray spectral sequence $E(-,\langle \sigma \rangle)$ with $\Z/2\Z$-coefficients for the maps $\pi^{(n)}$ and $\pi_E$. It gives the following commutative diagram
\begin{equation}\label{diagram_spec_seq_Z2}
\begin{tikzcd}
0 \arrow[r] & \langle k \rangle \arrow[r]  \arrow[d] & {H^2\left(E_{(n)},\Z/2\Z\right)} \arrow[r, "\pi^{(n)^*}"] \arrow[d, "g^*_2"'] & {H^2(S^{(n)},\Z/2\Z)^{\sigma^*}}  \arrow[d, "\Delta'^*_2"', "\vsim"]  \\
0 \arrow[r] & \langle k_E \rangle \arrow[r] & {H^2(E,\Z/2\Z)} \arrow[r, "\pi_E^*"] & {H^2(S,\Z/2\Z)^{\sigma^*}}.
\end{tikzcd}
\end{equation}
By the short exact sequence \eqref{second_exact_seq_Br},
\(H^2(S,\Z/2\Z)= H^2(S,\Z)\otimes \Z/2\Z\), and the same holds for \(S^{(n)}\) because the torsion of its third integral cohomology vanishes by \cref{torsH3}. 
Then the right most vertical map is an isomorphism since it coincides with the reduction modulo \(2\) of the pullback with integral coefficients, and $n$ is odd. 


Note that the map \(\pi^{(n)^*}\) is surjective, since the subspace \(H^2(E_{(n)}, \Z)\otimes \Z/2\Z\) surjects onto \(H^2(S^{(n)},\Z)^{\sigma^*}\otimes \Z/2\Z =H^2(S^{(n)},\Z/2\Z )^{\sigma^*}\) by the exactness of the first row of \eqref{diagram_spec_seq_Z-new}. One then concludes by Snake Lemma.
\end{proof}
We recall here some facts about the second cohomology groups of $S^{[n]}$ and $E_n$. 
Denote by $F$ the abstract even lattice $E_8(-1) \oplus U$, and let $q$ be the Beauville--Bogomolov--Fujiki form on $S^{[n]}$.
As in the proof of \cref{thm_brauer_computations}, recall that there is an isometry
\begin{equation}\label{eqn:latticeSn}
    \left(H^2(S^{[n]},\Z),q\right)\simeq F\overset{\perp}{\oplus} F \overset{\perp}{\oplus}(-2(n-1)) \overset{\perp}{\oplus} U,
\end{equation}
under which the action of $\sigma$ on $H^2(S^{[n]},\Z)$ is given by
\begin{equation}\label{eqn:involution}
 \sigma^*:(a,a',m,b) \longmapsto (a',a,m,-b).
\end{equation}
Moreover, the Cartan--Leray spectral sequence for the morphism $\pi\colon S^{[n]}\to E_n$ yields \begin{equation}\label{eqn:SecondCohoEn}
0 \lra \Z/2\Z \lra H^2(E_n, \Z) \stackrel{\pi^*}{\lra}  H^2(S^{[n]}, \Z)^{\sigma^*} \lra 0,
\end{equation}
where $\ker(\pi^*)$ is generated by the canonical bundle $K_{E_n}$ and the elements of \(H^2(S^{[n]}, \Z)^{\sigma^*}\)are those of the form \((a,a,m,0)\).
\begin{rem}\label[rem]{lattices}
For any \emph{even} lattice $(L,q)$ we define a \(\Z/2\Z\)-vector space with quadratic form by setting
 $$(L_2,\,\Tilde{q}) \coloneqq  \left(L \otimes \Z/2, \,\,\frac{1}{2}\,q\otimes \id\right).$$
Using the short exact sequence \eqref{second_exact_seq_Br}, $H^2(S^{[n]},\Z)\otimes \Z/2$ is isomorphic to $H^2\left(S^{[n]},\Z/2\right)$.  From ~\eqref{eqn:latticeSn} we get an isometry
\[
\left(H^2(S^{[n]},\Z/2), \tilde q\right)\simeq F_2\overset{\perp}{\oplus} F_2\overset{\perp}{\oplus} (0) \overset{\perp}{\oplus} U_2.
\] 
In particular, $\tilde q$ is a degenerate quadratic form with kernel $\{0,\delta\}$, where we still denote by $\delta$ the generator of the lattice $(0)$.
\end{rem}
Finally, the long exact sequence~\eqref{first_exact_seq_Br} with $n=2$ yields, using $\mathrm{Pic}(E_n) = H^2(E_n,\Z)$,
\begin{equation}\label{seqF2-new}
    0\longrightarrow H^2(E_n,\Z) \otimes \Z/2\Z \longrightarrow H^2\left(E_n, \Z/2\Z \right)\longrightarrow \mathrm{Br}(E_n)[2] \longrightarrow 0,
\end{equation}
where $H^2(E_n,\Z) \otimes \Z/2\Z$ is a $\Z/2\Z$-vector space of dimension $12$.\\

Let $\Delta_{F_2}\subset F_2\oplus F_2$ be the diagonal embedding of \(F_2\) and let $\epsilon$ be the unique element of $U_2$ with square equal to $1$.
The following proposition is the analogue of \cite[Proposition~5.3]{Bea09-Brauer_of_Eriques_surf} for $E_n$.
\begin{prop}\label[prop]{kerImPullB}
    The pull-back $\pi^*_2:H^2\left(E_n,\Z/2\Z\right)\longrightarrow H^2\left(S^{[n]},\Z/2\Z\right)$ satisfies:
    \begin{enumerate}[label = \rm(\roman*)]
        \item\label{itemKer} $\mathrm{Ker}(\pi^*_2) = \{0,k_{E_n}\}$;
        \item\label{itemIm} $\mathrm{Im}(\pi^*_2) = \Delta_{F_2} \oplus \Z/2\Z\cdot\delta \oplus \Z/2\Z\cdot \epsilon$.
    \end{enumerate}
\end{prop}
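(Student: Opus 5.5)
Our plan is to run the Cartan--Leray spectral sequence for $\pi\colon S^{[n]}\to E_n$ with $\Z/2\Z$-coefficients, as in the proof of \cref{lem:g*}, and to compare with the surface case via diagram~\eqref{ch6_comm_squares-new}. Since $S^{[n]}$ is simply connected and $H^0(S^{[n]},\Z/2\Z)=\Z/2\Z$ with trivial action, the $\Z/2\Z$-coefficient spectral sequence has $E_2^{p,1}=0$ and $E_2^{2,0}=H^2(G,\Z/2\Z)=\Z/2\Z$. It therefore gives an exact sequence
\[
0\lra H^2(G,\Z/2\Z)\lra H^2(E_n,\Z/2\Z)\xrightarrow{\pi_2^*} H^2(S^{[n]},\Z/2\Z)^{\sigma^*}\xrightarrow{d_3^{0,2}} H^3(G,\Z/2\Z).
\]
In particular $\Ker(\pi_2^*)$ has order at most $2$.

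For \ref{itemKer}, we show that $k_{E_n}$ is nonzero and is killed by $\pi_2^*$. It is killed because $\pi^*K_{E_n}=K_{S^{[n]}}=0$. For nonvanishing, note that $K_{E_n}\in H^2(E_n,\Z)$ is a nonzero $2$-torsion element by \eqref{eqn:SecondCohoEn}, and $H^2(E_n,\Z)\cong \Z/2\oplus\Z^{11}$. Hence the reduction $H^2(E_n,\Z)\otimes\Z/2\to H^2(E_n,\Z/2\Z)$, which is injective by \eqref{second_exact_seq_Br}, sends $K_{E_n}\otimes 1$ to a nonzero class. Thus $\Ker(\pi_2^*)=\{0,k_{E_n}\}$. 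Alternatively, one can pull back along $f$ and use \cref{lem:g*}, but the direct argument suffices.

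For \ref{itemIm}, we first obtain the containment $\supseteq$. The image contains $\pi^*(H^2(E_n,\Z))\otimes\Z/2$, which by \eqref{eqn:SecondCohoEn} is the reduction of $H^2(S^{[n]},\Z)^{\sigma^*}=\{(a,a,m,0)\}$, namely $\Delta_{F_2}\oplus\Z/2\cdot\delta$. This has dimension $11$, consistent with $H^2(E_n,\Z)\otimes\Z/2$ having dimension $12$ with $k_{E_n}$ in the kernel. Counting dimensions, $H^2(E_n,\Z/2\Z)$ has dimension $12+\dim \Br(E_n)[2]=13$ by \eqref{seqF2-new} and \cref{thm_brauer_computations}. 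Hence the image has dimension exactly $12$, so exactly one extra class must appear.

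Next we locate this image inside the $\sigma^*$-invariants of $H^2(S^{[n]},\Z/2)$. These invariants are $\Delta_{F_2}\oplus\Z/2\,\delta\oplus U_2$, since $\sigma^*$ acts by $-1\equiv 1$ on $U_2$ and swaps the $F_2$ factors. It remains to identify which line of $U_2$ occurs: the extra class is $\epsilon$ rather than one of the two isotropic vectors.

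This identification is the main obstacle. We plan to reduce to Beauville's surface result \cite[Proposition~5.3]{Bea09-Brauer_of_Eriques_surf}, which says that $\Im(H^2(E,\Z/2)\to H^2(S,\Z/2))=\Delta_{F_2}\oplus\Z/2\,\epsilon$.

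We transport this through $E_{(n)}$. By \cref{lem:g*}, $g_2^*$ is an isomorphism. By the proof of \cref{lem:g*}, $\Delta_2'^*$ is an isomorphism identifying $H^2(S^{(n)},\Z/2)$ with $H^2(S,\Z/2)$, since the map is $\Delta'^*/n$ with $n$ odd. Commutativity of the right square of \eqref{ch6_comm_squares-new} then shows that the image of $\pi^{(n)*}_2$ corresponds exactly to Beauville's image $\Delta_{F_2}\oplus\Z/2\,\epsilon$.

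The left square gives $\pi_2^*\circ f^*=h^*\circ\pi_2^{(n)*}$. Here $h^*$ is the inclusion of $H^2(S,\Z)$ into $H^2(S^{[n]},\Z)=H^2(S,\Z)\oplus\Z\delta$, and it is compatible with the decomposition \eqref{eqn:latticeSn} and with the form $\tilde q$, since $q$ restricts to the intersection form on $H^2(S,\Z)$. Hence $\epsilon$, viewed in $H^2(S^{[n]},\Z/2)$, lies in the image of $\pi_2^*$. Together with the dimension count, this gives \ref{itemIm}.

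The delicate points to check are that $h^*$ mod $2$ really matches the summands in \eqref{eqn:latticeSn}, and that the element called $\epsilon$ in $U_2$ coincides with Beauville's element.
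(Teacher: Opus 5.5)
Your proposal is correct and follows essentially the paper's route: you use the $\Z/2\Z$-coefficient Cartan--Leray sequence to get the kernel, the count $\dim H^2(E_n,\Z/2\Z)=13$ together with the integral image $\Delta_{F_2}\oplus\Z/2\Z\cdot\delta$, and then transport the surface case through $E_{(n)}$ via $g_2^*$, $\Delta_2'^*$ and $h_2^*$. The only difference is how you pin down the extra class: the paper takes $\beta_E$ not coming from integral classes, gets $\tilde q(\pi_{E,2}^*\beta_E)=1$ from Beauville's (3.2) and Lemma~5.4, and needs only that $h_2^*\circ(\Delta_2'^*)^{-1}$ preserves $\tilde q$ to force the $U_2$-component to be $\epsilon$, whereas you quote Beauville's Proposition~5.3 and use that this map respects the lattice decompositions (true because the decomposition of $H^2(S^{[n]},\Z)$ comes from Beauville's embedding of $H^2(S,\Z)$); you also justify explicitly that $k_{E_n}\neq 0$, which the paper leaves implicit.
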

\begin{proof}
    Consider the Cartan--Leray spectral sequence $E(S^{[n]},\langle \sigma\rangle)$ with $\Z/2$-coefficients. The third page
\begin{equation*}
    \begin{tikzcd}
        \textcolor{gray}{q = 2}&  H^2(S^{[n]}, \Z/2)^{\sigma^*} \ar[rrrdd, "d_3^{0,2}"]& * & * & * \\
        \textcolor{gray}{q = 1}&  0 & 0 & 0 & 0 & \\
        \textcolor{gray}{q = 0}&  \Z/2& \Z/2 & \Z/2 & \Z/2  \\
        &\textcolor{gray}{p=0} &\textcolor{gray}{p=1} & \textcolor{gray}{p=2} & \textcolor{gray}{p=3}\\
    \end{tikzcd}
\end{equation*}
gives the short exact sequence
\begin{equation}\label{Spec_seq_T-new}
\begin{tikzcd}
	0 & {\Z/2} & {H^2(E_n,\Z/2)} & {\mathrm{Ker}\left(d_3^{0,2}\right)} & 0.
	\arrow[from=1-1, to=1-2]
	\arrow[from=1-2, to=1-3]
	\arrow["{\pi^*_2}", from=1-3, to=1-4]
	\arrow[from=1-4, to=1-5]
\end{tikzcd}
\end{equation}
Hence, $\Ker(\pi^*_2) \simeq \Z/2$. Since $\pi^*{K_{E_n}} \simeq \mathcal{O}_{S^{[n]}}$, \cref{itemKer} follows. 

We now prove \cref{itemIm}. A dimensional count in the exact sequence \eqref{seqF2-new} shows that the $\Z/2\Z$-vector space  $H^2\left(E_n,\Z/2\Z\right)$ has dimension $13$; therefore, using the exact sequence~\eqref{Spec_seq_T-new}, we obtain that $\mathrm{Im}(\pi^*_2)=\ker(d_3^{0,2})$ has dimension $12$.  

By the descriptions of \(H^2(S^{[n]},\Z/2\Z)\) given in \cref{lattices} and of the involution $\sigma^*$ given in \eqref{eqn:involution}, we obtain that the $\sigma^*$-invariant of \(H^2(S^{[n]},\Z/2\Z)\) is 
$$
H^2(S^{[n]},\Z/2\Z)^{\sigma^*} = \Delta_{F_2}\oplus \Z/2\Z\cdot \delta\oplus U_2.
$$

One then must have \(\pi^*_2(H^2(E_n, \Z/2\Z))\subset \Delta_{F_2}\oplus \Z/2\Z\cdot \delta\oplus U_2.\) By \eqref{eqn:SecondCohoEn} we also have
\[\Delta_{F_2}\oplus \Z/2\Z \cdot \delta =\pi^*(H^2(E_n, \Z))\otimes \Z/2\Z\subset \pi^*_2(H^2(E_n, \Z/2\Z)).\]
To conclude the proof we need to show that $\epsilon\in U_2$ lies in $\pi^*_2(H^2(E_n, \Z/2\Z))$.

Consider the commutative diagram 
\[
\begin{tikzcd}[column sep = 20pt]
0 \arrow[r] & {H^2(E_n,\Z)\otimes \Z/2\Z} \arrow[r] & {H^2(E_n,\Z/2\Z)}\arrow[r] & \mathrm{Br}(E_n) \arrow[r] & 0 \\
0 \arrow[r] & {H^2\left(E_{(n)},\Z\right)\otimes \Z/2\Z} \arrow[r] \arrow[u, "f^*\otimes \id"] \arrow[d, "g^*\otimes \id"] & {H^2\left(E_{(n)},\Z/2\Z\right)} \arrow[r] \arrow[u, "f_2^*"] \arrow[d, "g_2^*"', "\vsim"] & H^3\left(E_{(n)},\Z\right)[2] \arrow[r] \arrow[u, "f_{\Br}^*", "\vsim"'] \arrow[d, "g_{\Br}^*"', "\vsim"] & 0 \\
0 \arrow[r] & {H^2(E,\Z)\otimes \Z/2\Z} \arrow[r] & {H^2(E,\Z/2\Z)}\arrow[r] & \mathrm{Br}(E) \arrow[r] & 0,
\end{tikzcd}
\]
where the short exact sequences come from applying \eqref{second_exact_seq_Br} to $E_n$, 
$E_{(n)}$ 
and $E$ respectively, while the vertical arrows arise from diagram~\eqref{ch6_comm_squares-new}. The morphism $g_2^*$ is an isomorphism by \cref{lem:g*}, and $f^*_{\Br}\colon 
H^3(E_{(n)},\Z)[2]\to \Br(E_n)$
is an isomorphism by \cref{torsSn,remark}.

Take a class $\beta_E \in H^2(E,\Z/2\Z)$ which does not come from $H^2(E,\Z)\otimes \Z/2\Z$. By \cite[(3.2)]{Bea09-Brauer_of_Eriques_surf} (see also the proof of Proposition 5.3), the self-intersection of $\beta_E$ is $1$, and by \cite[Lemma 5.4]{Bea09-Brauer_of_Eriques_surf}, $\Tilde{q_S}(\pi_{E,2}^*\beta_E)=1 $, where $q_S$ is the intersection form on the K3 surface $S$. 
The class $$\beta_n \coloneqq  (f_2^*\circ(g_2^*)^{-1})(\beta_E)\in H^2(E_n,\Z/2\Z)$$ does not vanish in $\mathrm{Br}(E_n)$, by diagram chasing. 

Since the morphisms $\Delta'$ and $h$ of diagram~\eqref{ch6_comm_squares-new} are equivariant with respect to the $\Z/2\Z$-actions, we obtain a morphism
\begin{equation}
    \varphi:H^2(S,\Z/2\Z)^{\sigma^*} \xrightarrow{(\Delta'^*_2)^{-1}} H^2(S^{(n)},\Z/2\Z)^{\sigma^*}\xhookrightarrow{h^*_2}H^2(S^{[n]},\Z/2\Z)^{\sigma^*},
\end{equation}
preserving the quadratic forms (in the proof of \cref{lem:g*} we also showed that $\Delta'^*_2$ is an isomorphism).
Commutativity of diagram~\eqref{ch6_comm_squares-new} implies that $\pi^*_2\circ f^*_2\circ(g^*_2)^{-1} = \varphi \circ \pi_{E,2}^*$. 
Therefore, we can compute
\[
\widetilde{q_{S^{[n]}}}(\pi^*_2\beta_{n}) = \widetilde{q_{S^{[n]}}}\left((\pi^*_2\circ f^*_2\circ(g^*_2)^{-1})(\beta_E)\right) = \widetilde{q_{S^{[n]}}}\left((\varphi \circ \pi_{E,2}^*)(\beta_E)\right)= \widetilde{q_S}\left( \pi_{E,2}^*\beta_E\right) =1.
\]
Henceforth, $\pi^*_2\beta_n = \epsilon$ and the proof is complete.
\end{proof}

The following lemma plays the role of \cite[Proposition~3.5(ii)]{Bea09-Brauer_of_Eriques_surf}: we remark how the proof of Beauville makes use of the fact that \(\tilde q\) is non-degenerate, which is false as soon as \(n>1\).
\begin{lem}\label[lem]{pi_*}
There exists an element $\lambda_0\in H^2(S^{[n]},\Z)$ such that $\pi_*\lambda_0 = K_{E_n}$. 
\end{lem}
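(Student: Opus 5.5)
The plan is to show that the transfer (pushforward) $\pi_*\colon H^2(S^{[n]},\Z)\to H^2(E_n,\Z)$ has $K_{E_n}$ in its image, and to reduce this to the surface case through diagram~\eqref{ch6_comm_squares-new}. For the Enriques surface $E$, Beauville shows that there is $\lambda\in H^2(S,\Z)$ with $\pi_{E*}\lambda=K_E$. His argument uses the transfer exact sequence with $\Z/2$-coefficients together with the non-degeneracy of $\tilde q_S$. Since that non-degeneracy fails for $n>1$, I will instead argue directly with integral transfers, using $\pi^*\pi_*=1+\sigma^*$ and $\pi_*\pi^*=2$.

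\textbf{Step 1: the image of $\pi_*$ modulo $\Ker \pi^*$.} By \eqref{eqn:SecondCohoEn}, $\pi^*$ identifies $H^2(E_n,\Z)/\langle K_{E_n}\rangle$ with $H^2(S^{[n]},\Z)^{\sigma^*}$. Every class of the form $(1+\sigma^*)x$ is hit, through $\pi^*\pi_*x=(1+\sigma^*)x$. Hence $\pi_*\lambda$ is determined by $(1+\sigma^*)\lambda$ up to adding $K_{E_n}$. It therefore suffices to find $\lambda$ with $(1+\sigma^*)\lambda=0$ and $\pi_*\lambda\neq 0$. Using the description \eqref{eqn:involution}, the classes with $(1+\sigma^*)\lambda=0$ are $\lambda=(a,-a,0,b)$ with $b\in U$.

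\textbf{Step 2: choosing the candidate.} The candidates $\lambda=(a,-a,0,0)=(1-\sigma^*)(a,0,0,0)$ satisfy $\pi_*\lambda=\pi_*(1-\sigma^*)(a,0,0,0)=0$, because $\pi_*\sigma^*=\pi_*$. So the work lies in the $U$-part. Take $b\in U$ with $b\notin 2U$, so that its class in $U/2U=H^1(G,H^2)$ is nonzero. We must show $\pi_*(0,0,0,b)=K_{E_n}\neq0$ for suitable $b$. I plan to transport this from $S$ via $h$ and $\Delta'$. The summand $F\oplus F\oplus U$ is $h^*\circ(\Delta'^*/n)^{-1}$ applied to $H^2(S,\Z)$ (see the proof of \cref{lem_E^40_S}). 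Transfers commute with equivariant pullbacks along the squares in \eqref{ch6_comm_squares-new}, so for $\mu\in H^2(S,\Z)$ we get
\[
\pi_* h^*\bigl((\Delta'^*/n)^{-1}\mu\bigr)=f^*\pi^{(n)}_*\bigl((\Delta'^*/n)^{-1}\mu\bigr).
\]
By \cref{lem:g*}, $g^*$ is injective and sends $K_{E_{(n)}}$ to $K_E$. Moreover $g^*\pi^{(n)}_*\bigl((\Delta'^*)^{-1}(n\mu)\bigr)=\pi_{E*}(n\mu)=n\,\pi_{E*}\mu$, which equals $K_E$ when $\pi_{E*}\mu=K_E$, because $n$ is odd. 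Choosing $\mu$ to be Beauville's $\lambda$ for $E$, it follows that $\pi^{(n)}_*(\Delta'^*)^{-1}(n\mu)=K_{E_{(n)}}$. Applying $f^*$, the element $\lambda_0:=h^*(\Delta'^*)^{-1}(n\mu)$ satisfies $\pi_*\lambda_0=f^*K_{E_{(n)}}$.

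\textbf{Step 3: the main obstacle.} It remains to check that $f^*K_{E_{(n)}}=K_{E_n}$. This is where I expect the difficulty. I would argue through torsion, as in \cref{lem:g*}. First, $f^*$ maps the torsion of $H^2(E_{(n)},\Z)$ into the torsion of $H^2(E_n,\Z)$. Second, $f^*$ is injective on torsion, since by the universal coefficient theorem the torsion is $\Ext(H_1,\Z)$, and $f_*$ is an isomorphism on $H_1=\Z/2$: both spaces have the same monodromy cover structure, and $h$ is equivariant and birational with simply connected source. The only nonzero torsion class is the canonical class on each side, so $f^*K_{E_{(n)}}=K_{E_n}$. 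An alternative for Step 3 is to note that $h$ is crepant, so $K_{S^{[n]}}=h^*K_{S^{(n)}}$ descends, giving $f^*K_{E_{(n)}}=K_{E_n}$. A further issue is that the transfer compatibility $\pi_*h^*=f^*\pi^{(n)}_*$ holds for finite free quotients by the naturality of the Cartan--Leray/transfer construction from \cref{subsec_Carta_Leray}. I will justify it by noting that both squares are cartesian up to the free $\Z/2$-action.
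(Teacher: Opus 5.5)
Your proposal is correct and is essentially the paper's proof. The paper uses the same ingredients: the transfer/pullback compatibility over \eqref{ch6_comm_squares-new} (it cites Aguilar--Prieto, whereas you use base change along the cartesian squares of $\Z/2$-torsors), Beauville's surjectivity of $\pi_{E,*}$, Lemma~\ref{lem:g*}, and $\Delta'^*(H^2(S^{(n)},\Z))=nH^2(S,\Z)$. From these it gets $\alpha$ with $\pi^{(n)}_*\alpha=K_{E_{(n)}}$, proving $\pi^{(n)}_*$ surjective instead of writing $\alpha=(\Delta'^*)^{-1}(n\mu)$ explicitly, and sets $\lambda_0=h^*\alpha$. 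It then shows $f^*K_{E_{(n)}}=K_{E_n}$ by writing $K_{E_n}=f^*K_{E_{(n)}}+D$ with $D$ supported on $\Exc(f)$ and checking $\pi^*D=0$; this is your crepancy alternative, and your torsion/$H_1$ argument modelled on Lemma~\ref{lem:g*} works equally well. Your Step~1 is harmless but not needed, since Step~2 already gives $\lambda_0$ with $\pi_*\lambda_0=f^*K_{E_{(n)}}$.
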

\begin{proof}
If we apply \cite[Theorem 5.5]{Aguilar_Prieto} to the two squares in diagram~\eqref{ch6_comm_squares-new}, we get the following commutative squares relating push-forward and pull-back maps
\begin{equation}\label{ch6_cohom_squares-new}
\begin{tikzcd}
{H^2(S^{[n]},\Z)} \arrow[d, "\pi_*"] & {H^2(S^{(n)},\Z)} \arrow[d, "\pi^{(n)}_*"] \arrow[l, "h^*"', hook'] \arrow[r, "\Delta'^*"] & {H^2(S,\Z)} \arrow[d, "\pi_{E,*}", two heads] \\
{H^2(E_n,\Z)}                          & {H^2\left(E_{(n)},\Z\right)} \arrow[l, "f^*"] \arrow[r, "g^*"', hook]                   & {H^2(E,\Z)}.                      
\end{tikzcd}
\end{equation}
The morphism $\pi_{E, *}$ is surjective by \cite[Proposition 3.5(ii)]{Bea09-Brauer_of_Eriques_surf}. 
\cref{lem:g*} shows that $g^*$ is injective and has image $nH^2(E, \Z)$ and \cref{lem_basta_svp} shows that $\Delta'^*$ has image $nH^2(S, \Z)$. Therefore, the morphism $\pi_*^{(n)}$ is surjective. 

Let $\alpha \in H^2(S^{(n)},\Z)$ be a class such that $\pi_*^{(n)}(\alpha) = K_{E_{(n)}}$. By diagram chasing in diagram~\eqref{ch6_cohom_squares-new}  we obtain 
\begin{equation}\label{eqn:preimageK}
    \pi_*(h^*\alpha)=f^* K_{E_{(n)}}.
\end{equation}

The morphism $f$ is birational, hence we can write \(K_{E_n}=f^*K+D\) for some divisor \(D\) supported on the exceptional locus \(\Exc(f)\). We want to show $D=0$. Since \(\pi\) is flat we have the equality of Weil divisors 
\[\pi^*[\Exc(f)]=[\pi^{-1}\Exc(h)]=\delta \in \Pic(S^{[n]}),\]
which implies that \(D\) is not torsion in \(\Pic(E_n)\). Therefore, 
it is enough to show \(\pi^*D=0\). The commutativity of left-side square of diagram~\eqref{ch6_comm_squares-new}, together with the equalities $\pi^{(n)^*}K_{E_{(n)}} = \cO_{S^{(n)}}$ and $\pi^*K_{E_n} = \cO_{S^{[n]}}$, gives the equality
\[\pi^* f^*K_{E_{(n)}}=h^*\pi^{(n)^*}K_{E_{(n)}}=h^*\cO_{S^{(n)}}=\cO_{S^{[n]}}=\pi^* f^*K_{E_{(n)}} +\pi^*D,\]
from which we obtain $\pi^*D = 0$. This proves $f^* K_{E_{(n)}} = K_{E_n}$. The proof of the lemma follows from  ~\eqref{eqn:preimageK} by taking \(\lambda_0 \coloneqq h^*\alpha\).
\end{proof}

\begin{prop}\label[prop]{kerPushF}
    Consider the push-forward $\pi_{*,2}:H^2\left(S^{[n]},\Z/2\right)\longrightarrow H^2\left(T,\Z/2\right)$. Restricted to the two-dimensional vector space $U_2$, its kernel is $\mathrm{Ker}(\pi_{*,2}) = \{0,\epsilon\}$.        
\end{prop}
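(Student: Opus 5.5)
We will not compute $\pi_{*,2}$ directly on $S^{[n]}$. Instead we transport the question to the Enriques surface $E$ through the middle column $S^{(n)}\to E_{(n)}$ of diagram~\eqref{ch6_comm_squares-new}, and then use Beauville's description for surfaces.

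\textbf{Step 1: the image is at most $\{0,k_{E_n}\}$, and $\epsilon$ is in the kernel.} For the finite étale double cover $\pi$ we have $\pi^*\pi_*=1+\sigma^*$ and $\pi_*\pi^*=2$. By \eqref{eqn:involution}, $\sigma^*$ acts as $-1$ on $U$, so it acts as the identity on $U_2$. Hence $\pi_2^*\pi_{*,2}=0$ on $U_2$. By \cref{kerImPullB}\ref{itemKer}, this gives $\pi_{*,2}(U_2)\subset\{0,k_{E_n}\}$, so the kernel on $U_2$ has dimension at least $1$. By \cref{kerImPullB}\ref{itemIm}, $\epsilon=\pi_2^*\beta_n$ for the class $\beta_n$ of that proof. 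Therefore $\pi_{*,2}(\epsilon)=\pi_{*,2}\pi_2^*\beta_n=2\beta_n=0$. It remains to show that $\pi_{*,2}$ does not vanish identically on $U_2$.

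\textbf{Step 2: reduction to $S^{(n)}$ and to $E$.} By \cref{lem_basta_svp} and the isomorphism \eqref{eq_sta_gran_bastarda}, $\frac1n\Delta'^*\colon H^2(S^{(n)},\Z)\to H^2(S,\Z)$ is a $\sigma$-equivariant isomorphism. Under the identification \eqref{eqn:latticeSn}, the composite $h^*\circ(\frac1n\Delta'^*)^{-1}$ is the standard embedding $H^2(S,\Z)\hookrightarrow H^2(S^{[n]},\Z)$, whose image is $F\oplus F\oplus U$. Hence every $u\in U_2$ can be written as $u=h_2^*v$ for some $v\in H^2(S^{(n)},\Z/2)$, and then $\Delta'^*_2v=n\,u_S=u_S$, where $u_S$ is the corresponding class in $U_2\subset H^2(S,\Z/2)$. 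Reducing the push–pull squares~\eqref{ch6_cohom_squares-new} mod $2$ gives
\[
\pi_{*,2}(u)=f_2^*\,\pi^{(n)}_{*,2}(v),\qquad g_2^*\,\pi^{(n)}_{*,2}(v)=\pi_{E*,2}(u_S).
\]
By \cref{lem:g*}, $g_2^*$ is an isomorphism with $g_2^*k_{E_{(n)}}=k_E$. Thus $\pi_{*,2}(u)=f_2^*(g_2^*)^{-1}\pi_{E*,2}(u_S)$.

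\textbf{Step 3: conclusion from the surface case.} By Beauville (\cite[Section~5]{Bea09-Brauer_of_Eriques_surf}, via $\pi_{E*}$ surjective onto $K_E$ in \cite[Proposition 3.5(ii)]{Bea09-Brauer_of_Eriques_surf}), there is $u_S\in U_2$ with $\pi_{E*,2}(u_S)=k_E$. Then $\pi_{*,2}(u)=f_2^*k_{E_{(n)}}$. In the proof of \cref{pi_*} we showed $f^*K_{E_{(n)}}=K_{E_n}$, so $f_2^*k_{E_{(n)}}=k_{E_n}$.

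Finally, $k_{E_n}\neq0$. Indeed, $K_{E_n}$ generates the torsion subgroup $\Z/2\Z$ of $H^2(E_n,\Z)$ by \eqref{eqn:SecondCohoEn}. This subgroup is a direct summand, so $K_{E_n}\otimes1\neq0$ in $H^2(E_n,\Z)\otimes\Z/2\Z$, which injects into $H^2(E_n,\Z/2\Z)$ by \eqref{seqF2-new}. Thus $\pi_{*,2}$ is a nonzero linear map $U_2\to\{0,k_{E_n}\}$ killing $\epsilon$, and its kernel is exactly $\{0,\epsilon\}$.

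\textbf{Main obstacle.} The delicate step is Step 3, namely making sure that the surface input really provides a class in $U_2$ (and not only in $H^2(S,\Z/2)$) with nonzero push-forward. If Beauville's statement is not directly available in that form, we would instead use $\lambda_0$ from \cref{pi_*}. Writing $\lambda_0=h^*\alpha$ and splitting $\alpha$ mod $2$ along $F\oplus F\oplus U$, we would check, using $\pi_*\sigma^*=\pi_*$ and $\pi_*\pi^*=2$ on the surface, that the $F\oplus F$ part pushes forward to $0$. This would force the $U_2$-component of $\lambda_0$ to push forward to $k_{E_n}$.
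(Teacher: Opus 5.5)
Your proof is correct. It uses the same ingredients as the paper's proof: the push--pull squares \eqref{ch6_cohom_squares-new}, \cref{lem:g*}, the identity $f^*K_{E_{(n)}}=K_{E_n}$ from the proof of \cref{pi_*}, and Beauville's surjectivity of $\pi_{E*}$. You assemble them in the opposite order.

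The paper first carries the surface input upstairs. This gives, in \cref{pi_*}, an integral class $\lambda_0=h^*\alpha$ with $\pi_*\lambda_0=K_{E_n}$. Since $(1+\sigma^*)\lambda_0=\pi^*K_{E_n}=0$, one can write $\lambda_0=(a,-a,0,b)$. The part $(a,-a,0,0)=(1-\sigma^*)(a,0,0,0)$ is killed by $\pi_*$, so $\bar b\in U_2\simeq H^1(\Z/2,H^2(S^{[n]},\Z))$ has non-zero push-forward.

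You instead carry the restricted map itself across. You prove $\pi_{*,2}\circ\varphi=f_2^*(g_2^*)^{-1}\circ\pi_{E*,2}$ on $U_2\subset H^2(S,\Z/2)$, where $\varphi=h_2^*\circ(\Delta'^*_2)^{-1}$ is the map from the proof of \cref{kerImPullB}. Since $f_2^*(g_2^*)^{-1}$ sends $k_E$ to $k_{E_n}\neq 0$, the kernel on $U_2$ is exactly $\varphi$ of the surface kernel, so the statement is inherited directly from Beauville's surface case. Your version also makes explicit two points the paper leaves tacit: the transfers commute with reduction mod $2$, and $k_{E_n}\neq0$. The paper's ordering, in turn, isolates \cref{pi_*} as a standalone substitute for \cite[Proposition~3.5(ii)]{Bea09-Brauer_of_Eriques_surf}, whose original proof fails for $n>1$ because $\tilde q$ is degenerate.

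Your ``main obstacle'' goes away if you run the paper's anti-invariance trick on $S$:
\begin{itemize}
\item Take $\lambda\in H^2(S,\Z)$ with $\pi_{E*}\lambda=K_E$.
\item Then $(1+\sigma^*)\lambda=\pi_E^*K_E=0$ forces $\lambda=(a,-a,b)$.
\item Since $(a,-a,0)=(1-\sigma^*)(a,0,0)$ is killed by $\pi_{E*}$, the class $u_S=\bar b\in U_2$ satisfies $\pi_{E*,2}(u_S)=k_E$.
\end{itemize}

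In the fallback you sketch, the $F\oplus F$-component dies only because of this anti-invariance. Its component is $(a,-a)$; mod $2$ it is the diagonal $(a,a)$, which lies in $\mathrm{Im}(\pi_2^*)$. An arbitrary element of $F_2\oplus F_2$ does not push forward to zero: for $c\neq 0$ one has $\pi_2^*\pi_{*,2}(c,0)=(c,c)\neq 0$. So the splitting must use that structure of $\lambda_0$, not just a reduction mod $2$.
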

\begin{proof}
    With the notation of the proof of \cref{kerImPullB}, we have $\pi^*_2\beta_n = \epsilon$, hence $\pi_{*,2}\epsilon =2\beta_n=0$. By \cref{pi_*}, there exists an element $\lambda_0\in H^2(S^{[n]},\Z)$ such that $\pi_*\lambda_0 = K_T$.
    In particular, 
    \[
        0=\pi^* K_T =\pi^*\pi_*\lambda_0=(1+\sigma^*)\lambda_0.
    \]
  The proof of \cref{thm_brauer_computations} shows that $H^1\left(\Z/2,H^2(S^{[n]},\Z)\right)\simeq U_2$, and $\lambda_0$ defines an element in this quotient whose push-forward is non-zero. Therefore, $\mathrm{Ker}\left((\pi_{*,2})_{|U_2}\right) = \{0,\epsilon\}$.
\end{proof}
\begin{cor}\label[cor]{corEquivalenceNumerical}
    Let $\lambda\in H^2(S^{[n]},\Z)$, the following are equivalent:
    \begin{enumerate}[label = \rm(\arabic*)]
        \item\label{item1-sec6} $\pi_*\lambda=0$ and $\lambda\not\in (1-\sigma^*)\left(H^2(S^{[n]},\Z)\right)$;
        \item\label{item2-sec6} $\sigma^*\lambda=-\lambda$ and $q(\lambda)\equiv 2 \pmod{4}$.
    \end{enumerate}
\end{cor}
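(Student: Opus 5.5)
The plan is to work entirely in the explicit model \eqref{eqn:latticeSn}--\eqref{eqn:involution}, with $\lambda=(a,a',m,b)$, $a,a'\in F$, $m\in\Z$, $b\in U$. The bridge between the two conditions is the identity $\pi^*\pi_*=1+\sigma^*$ together with the injectivity of $\pi^*$ on $H^2(E_n,\Z)$ modulo torsion.

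\emph{Step 1: reduction of \ref{item1-sec6}.} If $\pi_*\lambda=0$, then $(1+\sigma^*)\lambda=\pi^*\pi_*\lambda=0$, so $\sigma^*\lambda=-\lambda$. Conversely, if $\sigma^*\lambda=-\lambda$, then $\pi^*\pi_*\lambda=0$. By \eqref{eqn:SecondCohoEn} this forces $\pi_*\lambda\in\{0,K_{E_n}\}$.

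In the model, $\sigma^*\lambda=-\lambda$ means $a'=-a$ and $m=0$, i.e.\ $\lambda=(a,-a,0,b)$. By the computation in \cref{thm_brauer_computations}, $(1-\sigma^*)H^2=\{(v,-v,0,2c)\}$. Hence anti-invariant classes modulo $\Im(1-\sigma^*)$ are exactly $U/2U\simeq U_2$, and $\lambda\mapsto \bar b\in U_2$ realizes this. So both conditions only concern anti-invariant $\lambda$, and everything factors through the class $\bar b\in U_2=H^1(G,H^2(S^{[n]},\Z))$.

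\emph{Step 2: compute $\pi_*$ on anti-invariant classes.} For $\lambda$ anti-invariant, $\pi_*\lambda$ depends only on $\bar b$. Indeed, $\pi_*(1-\sigma^*)\mu=\pi_*\mu-\pi_*\sigma^*\mu=0$ because $\pi\circ\sigma=\pi$. This gives a homomorphism $U_2\to\{0,K_{E_n}\}\simeq\Z/2$. Since $\pi^*K_{E_n}=0$ and $K_{E_n}$ has order two, the homomorphism is compatible with the mod-$2$ pushforward $\pi_{*,2}$ restricted to $U_2$. By \cref{kerPushF}, its kernel is $\{0,\epsilon\}$. So for anti-invariant $\lambda$: $\pi_*\lambda=0$ and $\lambda\notin\Im(1-\sigma^*)$ if and only if $\bar b=\epsilon$.

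The main obstacle is making this compatibility precise. The integral value $\pi_*\lambda\in\{0,K_{E_n}\}$ must be detected by $\pi_{*,2}(\bar\lambda)\in H^2(E_n,\Z/2)$. This needs $K_{E_n}\otimes 1$ to be nonzero in $H^2(E_n,\Z)\otimes\Z/2$, i.e.\ $K_{E_n}\notin 2H^2(E_n,\Z)$. That holds because $k_{E_n}\neq0$ by \cref{kerImPullB}. I would use it to identify the integral statement with the mod-$2$ one.

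\emph{Step 3: compute $q$.} For $\lambda=(a,-a,0,b)$,
\[
q(\lambda)=2a^2+b^2 .
\]
Since $F$ is even, $a^2\in 2\Z$, so $2a^2\equiv 0\pmod 4$. Hence $q(\lambda)\equiv b^2\pmod 4$, and $b^2/2\bmod 2=\tilde q(\bar b)$. In $U_2$ the form $\tilde q$ takes value $1$ only at $\epsilon$. So $q(\lambda)\equiv 2\pmod 4$ if and only if $\bar b=\epsilon$. Combined with Steps 1 and 2, this gives \ref{item1-sec6}$\iff$\ref{item2-sec6}.
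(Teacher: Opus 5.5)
Your proposal is correct and follows essentially the same route as the paper: reduce to anti-invariant classes $\lambda=(a,-a,0,b)$, identify both conditions with $\bar b=\epsilon\in U_2$ via \cref{kerPushF} and the description of $\Im(1-\sigma^*)$, and use $q(\lambda)=2q(a)+q(b)\equiv q(b)\pmod 4$. You also spell out the step the paper uses tacitly when it passes from $\pi_{*,2}\bar b=0$ to $\pi_*\lambda=0$, namely that $K_{E_n}\otimes 1\neq 0$ in $H^2(E_n,\Z)\otimes\Z/2\Z$, so the mod-$2$ pushforward detects the integral one.
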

\begin{proof}
    We write $\lambda=(a,a',m,b)\in F \oplus F\oplus \Z\left\langle\delta\right\rangle \oplus U$,  and denote $\overline{b}\in U_2$ the class of $b$. 
    
    We first show \ref{item1-sec6}$\Rightarrow$\ref{item2-sec6}. First observe that $\lambda$ lies in the kernel of $(1+\sigma^*)=\pi^*\circ\pi_*$, hence it must be of the form $(a, -a, 0, b)$ for some $a\in F$ and $b\in U$. Note that if $\overline{b}=0$, then $b=2\Tilde{b}$ for some $\tilde{b}\in U$ and $\lambda=(1-\sigma^*)(a,0,0,\Tilde{b})$, which is a contradiction. Hence, $\overline{b}$ is a non-zero element of $\mathrm{Ker}(\pi_{*,2})$ and is therefore equal to $\epsilon$ by \cref{kerPushF}. In particular, 
    \[
    q(\lambda) = 2q(a)+q(b)\equiv q(b) \equiv 2 \pmod{4}
    \]
    (recall that the quadratic form in $U_2$ is the one of $U$ divided by $2$).
    
    We now prove \ref{item2-sec6}$\Rightarrow$\ref{item1-sec6}. Again, $\lambda$ is of the form $(a,-a,0,b)$, for some $a\in F$ and some $b\in U$ such that
    \[
    q(b)\equiv q(\lambda)\equiv 2\pmod{4}.
    \]
    In particular, the vector $\overline b\in U_2$ has square $1$, which implies $\overline{b}=\epsilon$.  As before, we have that $\lambda\not\in (1-\sigma^*)\left(H^2(S^{[n]},\Z)\right)$. Moreover, $\pi_*\lambda=\pi_*\sigma^*\lambda=-\pi_*\lambda$, so that $\pi_*\lambda=0$ or $\pi_*\lambda=K_T$. Since $\pi_{*,2}\overline{b} = 0$ by \cref{kerPushF}, we obtain $\pi_*\lambda=0$.
\end{proof}

\begin{cor}\label[cor]{corEquivalenceBrauerClass}
    Let $b_{n}\in \mathrm{Br}(E_n)$ be the non-zero class. The following are equivalent:
    \begin{enumerate}
        \item $\pi^*b_n=0$ in $\mathrm{Br}(S^{[n]})$;
        \item it exists $L\in \mathrm{Pic}(S^{[n]})$ such that $\pi_*c_1(L)=0$ and $c_1(L)\not\in (1-\sigma^*)\left(H^2(S^{[n]},\Z)\right)$;
        \item it exists $L\in \mathrm{Pic}(S^{[n]})$ such that $\sigma^*L=L^\vee$ and $q(L)\equiv 2 \pmod{4}$.
    \end{enumerate}
\end{cor}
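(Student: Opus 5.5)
The equivalence of (2) and (3) is \cref{corEquivalenceNumerical} applied to $\lambda=c_1(L)$. Indeed, $\sigma^*L=L^\vee$ is equivalent to $\sigma^*c_1(L)=-c_1(L)$, because $c_1\colon\Pic(S^{[n]})\to H^2(S^{[n]},\Z)$ is injective ($H^1(S^{[n]},\cO)=0$). Also $q(L)$ means $q(c_1(L))$. So the real work is the equivalence (1)$\Leftrightarrow$(2), which I would prove by adapting Beauville's argument \cite[Proposition 4.1 / Corollary 5.7]{Bea09-Brauer_of_Eriques_surf} to the statements already established for $E_n$.

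First I use the description of $\ker\pi^*_{\Br}$ via the Kummer sequences. Apply \eqref{first_exact_seq_Br} with $n=2$ to $E_n$ and to $S^{[n]}$, and use the functoriality of $\pi^*$. Since $b_n$ is the image of $\beta_n\in H^2(E_n,\Z/2)$ (see the proof of \cref{kerImPullB}), we get: $\pi^*b_n=0$ if and only if $\pi_2^*\beta_n=\epsilon$ lies in the image of $\Pic(S^{[n]})\otimes\Z/2\to H^2(S^{[n]},\Z/2)$. Equivalently, there is $L\in\Pic(S^{[n]})$ with $c_1(L)\equiv\epsilon \pmod 2$. This uses $H^2(S^{[n]},\Z/2)=H^2(S^{[n]},\Z)\otimes\Z/2$, from \cref{lattices} and torsion-freeness of $H^3$. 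One point needs care: $\pi^*b_n$ lies in $\Br(S^{[n]})[2]$, and that group sits in $H^2(S^{[n]},\Z/2)/(\Pic\otimes\Z/2)$ exactly by \eqref{first_exact_seq_Br}. So the criterion is simply that $\epsilon\in c_1(\Pic(S^{[n]}))+2H^2(S^{[n]},\Z)$.

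Next, (2)$\Rightarrow$(1). Given $L$ as in (2), the proof of \cref{corEquivalenceNumerical} shows that $c_1(L)=(a,-a,0,b)$ with $\bar b=\epsilon$. Hence $c_1(L)\bmod 2=(\bar a,\bar a,0,\epsilon)$. Now $(\bar a,\bar a,0,0)\in\Delta_{F_2}$ is $\pi_2^*$ of a class coming from $H^2(E_n,\Z)\otimes\Z/2$. Subtracting it, $\epsilon$ differs from $c_1(L)\bmod 2$ by $\pi_2^*$ of an integral class $\gamma$. Then $\pi^*c_1(\Lambda)$ for a line bundle $\Lambda$ on $E_n$ with $c_1(\Lambda)=\gamma$ (possible since $\Pic(E_n)=H^2(E_n,\Z)$) is algebraic, so $\epsilon$ is algebraic mod $2$ and $\pi^*b_n=0$.

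For (1)$\Rightarrow$(2), take $L$ with $c_1(L)\equiv\epsilon$ mod $2$ and set $M:=L\otimes\sigma^*L^\vee$. Then $\sigma^*M=M^\vee$, and $c_1(M)=(1-\sigma^*)c_1(L)$. Writing $c_1(L)=(a,a',m,b)$ with $\bar b=\epsilon$, we get $c_1(M)=(a-a',a'-a,0,2b)$, which lies in the image of $1-\sigma^*$. That is the wrong candidate, so instead I would mimic Beauville: use the $\sigma$-anti-invariant part. Let $\lambda:=c_1(L)-c_1(\sigma^*L)\cdot$ ... Again this is in $\Im(1-\sigma^*)$. The better route is to reduce via $H^1(G,\Pic)$. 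The class $\epsilon$, being $\sigma$-invariant mod $2$ but represented by $b$ with $\sigma^* b=-b$, forces $L+\sigma^*L$ to be $\sigma$-invariant. Then $\pi^*$-surjectivity from \eqref{eqn:SecondCohoEn} gives $L\otimes\sigma^*L\simeq\pi^*N$, and I replace $L$ by $L\otimes\pi^*N'$ to kill the invariant components.

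This last step is the main obstacle: producing an anti-invariant algebraic class congruent to $\epsilon$ modulo $2$. It requires care with the $\delta$-coordinate, where $m$ must be made $0$ using integrality constraints in $\Pic$. I would try to show directly that $\Pic(S^{[n]})^{-}$ surjects onto the relevant part of $H^1(G,\Pic)$, using $\Pic(S^{[n]})=\Pic(S)\oplus\Z\delta$ and Beauville's surface case \cite[Corollary 5.7]{Bea09-Brauer_of_Eriques_surf}.
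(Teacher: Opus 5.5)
\textbf{There is a gap in (1)$\Rightarrow$(2).} Your proofs of (2)$\Leftrightarrow$(3) and of (2)$\Rightarrow$(1) are fine. Your reformulation of (1) is also correct: $\pi^*b_n=0$ if and only if $\epsilon$ is the mod-$2$ reduction of an algebraic class. Indeed, any lift $\beta_n$ of $b_n$ has $\pi_2^*\beta_n\in\epsilon+\pi_2^*(H^2(E_n,\Z)\otimes\Z/2\Z)$ by \cref{kerImPullB}, and the second summand is algebraic. However, you never prove (1)$\Rightarrow$(2). You stop at what you call the main obstacle and only sketch a detour through $\Pic(S)\oplus\Z\delta$ and Beauville's surface case. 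As written, one direction of the corollary is missing.

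\textbf{How to close it.} Your idea of twisting by pullbacks from $E_n$ is the right one; what you did not use is the parity that the congruence forces. If $c_1(L)\equiv\epsilon \pmod 2$, then $c_1(L)=(2y,2y',m,b)$ with $y,y'\in F$, $m\in\Z$ and $\bar b=\epsilon$.
\begin{enumerate}[label=(\roman*)]
\item The class $v=(y+y',y+y',m,0)$ is integral and $\sigma^*$-invariant. By \eqref{eqn:SecondCohoEn}, $v=\pi^*\gamma$ for some $\gamma\in H^2(E_n,\Z)=c_1(\Pic(E_n))$. No care is needed with the $\delta$-coordinate, because $\delta$ is itself invariant.
\item Let $\Lambda\in\Pic(E_n)$ with $c_1(\Lambda)=\gamma$, and set $L'=L\otimes\pi^*\Lambda^\vee$. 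Then $c_1(L')=(y-y',y'-y,0,b)$. This class is anti-invariant, so $\sigma^*L'=L'^\vee$.
\item We have $q(L')=2q(y-y')+q(b)\equiv 2\pmod 4$, since $F$ is even and $\tilde q(\epsilon)=1$.
\end{enumerate}
This is (3), and \cref{corEquivalenceNumerical} then gives (2).

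\textbf{Comparison with the paper.} With this repair, your argument is a self-contained mod-$2$ proof built on \cref{kerImPullB} and \cref{kerPushF}. The paper instead obtains (1)$\Leftrightarrow$(2) in one line from Beauville's general Corollary~4.3. That result rests on the description $\ker\pi^*_{\Br}\simeq\ker(\pi_*|_{\NS})/(1-\sigma^*)\NS$, and is combined with $\Br(E_n)=\Z/2\Z$. The paper uses \cref{corEquivalenceNumerical} only for (2)$\Leftrightarrow$(3).
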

\begin{proof}
    It follows directly from the general result \cite[Corollary 4.3]{Bea09-Brauer_of_Eriques_surf} applied to the quotient $\pi:S^{[n]}\to E_n$ and by \cref{corEquivalenceNumerical}.
\end{proof}

It is possible to translate the conditions in \cref{corEquivalenceBrauerClass} into a moduli statement. For this purpose, we recall the theory of the period map for Enriques manifolds, developed by Oguiso and Schr\"oer in \cite{Oguiso-Schroer_Periods}. With the notation introduced in  ~\eqref{eqn:latticeSn}, let $$L \coloneqq  F \oplus F \oplus (-2(n-1)) \oplus U$$ be the abstract Beauville--Bogomolov lattice for \hk\ manifolds of type $K3^{[n]}$, endowed with the $\Z/2\Z$-action defined in~\eqref{eqn:involution}. 
Let $L^-$ be the anti-invariant sublattice of $L$, 
and consider the \emph{period domain}
\[
\Omega \coloneqq  \{ \omega \in \mathbb{P}(L_\mathbb{C}^-) \mid \omega^2 = 0 \text{ and } \omega\cdot \overline{\omega}>0\}.
\]
Given an Enriques manifold $E_n = S^{[n]}/\langle i^{[n]}\rangle$ and an isometry $\varphi\colon H^2(S^{[n]}, \Z) \to L$ respecting the $\Z/2\Z$-actions, the point $\left[\varphi_\C\left(H^{2,0}(S^{[n]})\right)\right]$ lies in $\Omega$.

Finally, for any $\lambda\in L^-$, we denote by $H_{\lambda}$ the hyperplane in $\Omega$ cut out by $\P(\lambda^\perp\otimes \C)$.

\begin{prop}
For an Enriques manifold $E_n$ we have $\pi^*b_n =0$ if and only if its period $[\varphi(\omega_{S^{[n]}})]$ belongs to $H_\lambda$ for $\lambda\in L^-$ such that $\lambda^2 \equiv 2 \pmod{4}$.
\end{prop}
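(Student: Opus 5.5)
Combining \cref{corEquivalenceBrauerClass} with \cref{corEquivalenceNumerical}, we know that $\pi^*b_n=0$ if and only if there is a class $\lambda\in H^2(S^{[n]},\Z)$ such that $\lambda=c_1(L)$ for some line bundle $L$, $\sigma^*\lambda=-\lambda$, and $q(\lambda)\equiv 2\pmod 4$. The plan is to translate each of these three conditions, via the marking $\varphi$, into a condition on the period point.

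First, since $\varphi$ is compatible with the $\Z/2\Z$-actions, the anti-invariant classes in $H^2(S^{[n]},\Z)$ correspond exactly to $L^-$. Since $\varphi$ is an isometry, the numerical condition $q(\lambda)\equiv 2\pmod 4$ becomes $\varphi(\lambda)^2\equiv 2\pmod 4$.

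Second, by the Lefschetz $(1,1)$ theorem, $\lambda$ is the first Chern class of a line bundle if and only if $\lambda$ is of type $(1,1)$. For a hyper-K\"ahler manifold this holds if and only if $\lambda$ is $q$-orthogonal to $H^{2,0}(S^{[n]})=\C\,\omega_{S^{[n]}}$. Indeed, $q(\lambda,\omega)=0$ forces the $(0,2)$-component $\bar\omega$ to vanish as well, by reality of $\lambda$, and the $(2,0)$-component vanishes by the non-degeneracy of $q$ on $H^{2,0}\oplus H^{0,2}$. Transporting through $\varphi$, this says that $[\varphi(\omega_{S^{[n]}})]\in\P(\varphi(\lambda)^\perp\otimes\C)$, i.e. the period lies in $H_{\varphi(\lambda)}$. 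Note that $\omega$ lies in $L^-_\C$ because the action is purely non-symplectic, so $H_\lambda$ is indeed a hyperplane section of $\Omega$. If $\lambda\neq0$ it does not contain $\Omega$, because $\lambda$ is anti-invariant and non-zero, so this is a genuine hyperplane.

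Both implications then follow by running this dictionary in each direction. If $\pi^*b_n=0$, take $L$ as in \cref{corEquivalenceBrauerClass}(3) and set $\lambda\coloneqq\varphi(c_1(L))$. Conversely, given $\lambda\in L^-$ with $\lambda^2\equiv2\pmod4$ and period in $H_\lambda$, the class $\varphi^{-1}(\lambda)$ is anti-invariant and of type $(1,1)$, hence equals $c_1(L)$ with $\sigma^*L\simeq L^\vee$. Here we use that $\Pic(S^{[n]})\to H^2$ is injective, since $H^1(S^{[n]},\cO)=0$. We also have $q(L)\equiv2\pmod4$, and we conclude by \cref{corEquivalenceBrauerClass}.

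The main point requiring care is the Hodge-theoretic equivalence between $q(\lambda,\omega)=0$ and $\lambda$ being of type $(1,1)$. This is standard for hyper-K\"ahler manifolds: the Beauville--Bogomolov--Fujiki form pairs $H^{2,0}$ non-trivially only with $H^{0,2}$. The remaining steps are formal bookkeeping with the equivariant marking.
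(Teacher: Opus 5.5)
Your proposal is correct and follows the paper's proof: combine condition (3) of \cref{corEquivalenceBrauerClass} with the fact that the classes of $L^-$ orthogonal to the period are exactly the anti-invariant first Chern classes of line bundles. You spell out the Lefschetz $(1,1)$ step that the paper states in one line; in your wording the reasons for the vanishing of the $(0,2)$- and $(2,0)$-components are swapped (the pairing with $\omega$ kills the $(0,2)$-part, and reality then kills the $(2,0)$-part), which is harmless.
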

\begin{proof}
The elements in $L^-$ orthogonal to $[\varphi(\omega_{S^{[n]}})]$ are precisely the ones coming from $\mathrm{Pic}(S^{[n]})$. By \cref{corEquivalenceBrauerClass}, $\pi^*b_n =0$ is equivalent to say that $[\varphi(\omega_{S^{[n]}})]$ belongs to an hyperplane $H_\lambda$ with $\lambda \in \varphi(c_1(\mathrm{Pic}(S^{[n]})))$ such that $\sigma^*(\lambda) = -\lambda$ and $\lambda^2 \equiv 2 \pmod{4}$.
\end{proof}

Let $\Gamma$ be the group of isometries of $L^-$. Following \cite[(6.4)]{Bea09-Brauer_of_Eriques_surf}, the quotient $\cM = \Omega/\Gamma$ is a coarse moduli space for Enriques manifolds of type $E_n$.
Note that the lattice $L^-$  is orthogonal to the component $(-2(n-1))$ of $L$, corresponding to the exceptional class $\delta$. In particular, $L^-$ is isometric to the anti-invariant sublattice of a K3 surface $S$ carrying a fixed point free involution. Therefore, $\cM$ coincides with coarse moduli space of Enriques surfaces. The same argument as in \cite[Corollary 6.5] {Bea09-Brauer_of_Eriques_surf} gives the analogous statement for Enriques manifolds of type $E_n$.
\begin{thm}\label[thm]{corEquivalenceBrauerClassModuli}
The Enriques manifolds $E_n$ such that $\pi^*b_n=0$ form an infinite countable union of non-empty hypersurfaces in the moduli space $\cM$.
\end{thm}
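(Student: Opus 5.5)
The plan is to reduce the statement to the preceding proposition together with the identification of $\cM$ with the moduli space of Enriques surfaces, and then follow \cite[Corollary 6.5]{Bea09-Brauer_of_Eriques_surf}. By the preceding proposition, the locus $\{\pi^*b_n=0\}$ in $\cM=\Omega/\Gamma$ is the image of
\[
\bigcup_{\lambda\in L^-,\ \lambda^2\equiv 2 \ (\mathrm{mod}\ 4)} H_\lambda .
\]
This union is $\Gamma$-invariant, because $\Gamma$ preserves $L^-$ and the quadratic form. Since $L^-$ is countable, this is a countable union of hyperplane sections of $\Omega$, and its image in $\cM$ is a countable union of images of the $H_\lambda$. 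Each such image is an analytic hypersurface of the quasi-projective variety $\cM$: by the usual Baily--Borel/Heegner divisor argument, the image of $H_\lambda$ is the image of the arithmetic quotient $\Gamma_\lambda\backslash H_\lambda$, and this quotient is a closed algebraic divisor. In particular, $H_\lambda$ has codimension one in $\Omega$ as soon as it is non-empty.

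Next I check that each $H_\lambda$ with $\lambda^2\equiv 2 \pmod 4$ is non-empty. Here $L^-\simeq F(2)\oplus U$, where $F(2)$ is the anti-diagonal $\{(v,-v)\}$. The lattice $L^-$ has signature $(2,10)$, so $\lambda^\perp\otimes\C$ contains a period point as soon as $\lambda^\perp\otimes\mathbf{R}$ has signature $(2,\ast)$, which is automatic if $\lambda^2<0$. Choosing $\lambda=(a,-a,0,b)$ with $b\in U$ of square $2$ modulo $4$ (for instance $b^2=-2$) and $a\in F$ of sufficiently negative square gives $\lambda^2=4a^2+b^2<0$ and $\lambda^2\equiv 2 \pmod 4$. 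Moreover, there are infinitely many $\Gamma$-orbits of such $\lambda$ (for example, the orbits can be distinguished by the value $\lambda^2$, and up to scaling the hyperplanes $H_\lambda$ are distinct), so the union is infinite.

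The remaining point is that every period point of $\Omega$ is realized by some $E_n$, so that the locus is computed inside all of $\cM$. Surjectivity of the period map for Enriques surfaces shows that any $\omega\in\Omega$ off the usual $(-2)$-hyperplanes is the period of some $(S,i)$, and then $E_n=S^{[n]}/\langle i^{[n]}\rangle$ has the same period, since $L^-$ is orthogonal to $\delta$. The excluded locus is itself a countable union of hypersurfaces. I must make sure that no $H_\lambda$ is entirely contained in it; it suffices to note that a very general point of $H_\lambda$ has Picard lattice generated by $\lambda$ inside $L^-$, so it contains no anti-invariant $(-2)$-class, because $\lambda^2\ne -2$ can be arranged. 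This compatibility with the excluded locus, and the precise meaning of ``coarse moduli space'' for the $E_n$, is where I expect the main difficulty. In the paper I would handle it by citing \cite[Section 6]{Bea09-Brauer_of_Eriques_surf} and the identification of $\cM$ stated above, noting that Beauville's argument uses only the lattice $L^-$ and the numerical condition $\lambda^2\equiv 2 \pmod 4$, both of which coincide with the surface case.
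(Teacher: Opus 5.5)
Your proposal is correct and follows the paper's route: the paper identifies $\cM$ with the moduli space of Enriques surfaces (since $L^-$ is orthogonal to $\delta$), combines this with the preceding proposition, and invokes the argument of \cite[Corollary 6.5]{Bea09-Brauer_of_Eriques_surf}. Your non-emptiness, infinitude and $(-2)$-hyperplane discussion is that argument written out, and it is more careful than the paper, which writes $\cM=\Omega/\Gamma$ without removing the $(-2)$-hyperplanes.

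Two harmless slips. First, $q(a,-a,0,b)=2q(a)+q(b)$, not $4q(a)+q(b)$; the congruence modulo $4$ still holds because $F$ is even. Second, the claim that a very general point of $H_\lambda$ has Picard lattice $\Z\lambda$ needs $\lambda$ primitive, not merely $\lambda^2\neq-2$ (consider $\lambda=3\mu$ with $\mu^2=-2$). Your explicit choice with $b^2=-2$ is primitive, so the conclusion stands.
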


\section{Special Brauer-Severi varieties over Enriques manifolds}\label[section]{secSBvar}

Let \(T\) be the Enriques manifold of index \(d\)  and \(\pi \colon X\to T\) its \hk \ universal cover; more generally, the constructions in this section will carry over to the case where \(X\to T\) is a finite étale cyclic cover of smooth projective manifolds such that \(h^{2,0}(T)=0\) and \(h^{1,0}(X)=0\). 
The aim of this section is to construct (non-trivial) Brauer--Severi varieties $P\to T$ in the kernel of the pullback map
$$
\pi^*_{\Br}\colon \Br(T)\to \Br(X).
$$
Let $\sigma$ be a generator of the Galois group $G$ of $\pi$. By \cite[Proposition 4.1]{Bea09-Brauer_of_Eriques_surf}, the kernel of $\pi^*_{\Br}$ is isomorphic to $$\ker(\pi_*|_{\NS(X)})/(\id-\sigma^*)(\NS(X)),$$
where $\pi_*|_{\NS(X)}$ is the restriction of the push-forward map $\pi_*\colon H^2(X, \Z)\to H^2(T, \Z)$ to $\NS(X)$.

Denote by $\Pic_G(X)$ the group of \(G\)-linearized line bundles modulo \(G\)-equivariant isomorphism. 
There exists a morphism
$$
N\colon \Pic(X) \lra \Pic_G(X),
$$
sending a line bundle $L$ on $X$ to the line bundle
$$
N_d(L)\coloneqq  L \otimes \sigma^*L \otimes \cdots \otimes (\sigma^{d-1})^*L
$$
endowed with then  natural \(G\)-linearization given by cyclic permutation of elementary tensors and the fact that \((\sigma^d)^* L =L\). 

By \cite[Lemma 3.3.1]{Brion_notes} the pullback via $\pi$ induces an isomorphism
\[\pi_G^* \colon \Pic(T) \to \Pic_G(X)
\] which sends \(F\) to \(\pi^*F\) endowed with the \(G\)-linearization induced by the pull-back diagram
\[
\begin{tikzcd}
\pi^*F \arrow[rrd, bend left] \arrow[rdd, bend right] \arrow[rd, "\simHalf"{pos=0.35, yshift=-6pt}] &  & \\
& \sigma^*\pi^*F \arrow[d] \arrow[r] & F \arrow[d] \\
& X \arrow[r, "\pi\circ\sigma = \pi"']     & T.        
\end{tikzcd}
\]

Since taking the first Chern class induces an isomorphism $\Pic(T)\simeq H^2(T,\Z)$, we can apply this to $F = \pi_*c_1(L)$ for $L\in\mathrm{Pic}(X)$. In this case, the induced linearization on 
\[
\pi^*F = L \otimes \sigma^*L \otimes \cdots \otimes (\sigma^{d-1})^*L
\]
corresponds to the one induced by $N$. In other words, the composition
$$
\Pic(X)\xrightarrow{\pi_*\circ c_1} \Pic(T) \xrightarrow{\pi_G^*} \Pic_G(X)
$$
is equal to the norm map $N$. Since $\pi_G^*$ is an isomorphism and $c_1$ is injective, we obtain that the kernel of $N$ is isomorphic to the the kernel of $\pi_*\vert_{\rm{NS}(X)}$.

\begin{defin}
For any line bundle \(L\in \Pic(X)\), for $k\geq 1$ we define its \textit{k-th partial norm} to be the line bundle 
\[N_k(L)\coloneqq  \bigotimes_{i=0}^{k-1} (\sigma^i)^*L \in \Pic(X).\]
Moreover, we set $N_0(L)\coloneqq  \mathcal{O}_X$.
\end{defin} 
\begin{prop}\label[proposition]{GLinBastaaaaaa}
    Let $L$ be line bundle contained in the kernel of $N$, and consider the the rank \(d\) vector bundle 
    $$
    \cE \coloneqq  \bigoplus_{k=0}^{d-1} N_k(L). 
    $$
    The projective bundle $p\colon \P_X(\cE)\to X$ descends to $T$. Namely, there exist a Brauer-Severi variety $P_L\to T$ over $T$ and a cartesian diagram
    \[\begin{tikzcd}
	\P_X(\cE) & {P_L} \\
	X & T
	\arrow[ from=1-1, to=1-2]
	\arrow["p"', from=1-1, to=2-1]
	\arrow[ from=1-2, to=2-2]
	\arrow["\pi"', from=2-1, to=2-2]
\end{tikzcd}\]
\end{prop}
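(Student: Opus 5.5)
The plan is to construct a $G$-linearization of $\P_X(\cE)$, that is, a lift of the $G$-action on $X$ to $\P_X(\cE)$ compatible with $p$, and then descend. Since $\pi$ is a finite étale Galois cover with group $G=\langle\sigma\rangle$, a $G$-equivariant projective bundle over $X$ with $G$ acting freely descends to a projective bundle over $T$, i.e.\ a Brauer--Severi variety $P_L\coloneqq \P_X(\cE)/G\to T$. The quotient is a smooth projective variety because $G$ acts freely on $\P_X(\cE)$, as it does so on $X$. The cartesian square is then automatic from the construction of the quotient: $\P_X(\cE)\to P_L$ is étale Galois with group $G$ and sits over $\pi$, so the induced map $\P_X(\cE)\to X\times_T P_L$ is bijective on fibres and étale, hence an isomorphism.

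The key step is to find an isomorphism $\phi\colon \sigma^*\cE\to \cE\otimes M$ for some line bundle $M$, satisfying a cocycle condition up to scalars. It is enough to produce a $\sigma$-semilinear automorphism of $\P_X(\cE)$ whose $d$-th iterate is the identity. We compute
\[
\sigma^*N_k(L)=\bigotimes_{i=1}^{k}(\sigma^i)^*L = N_{k+1}(L)\otimes L^{-1}.
\]
So $\sigma^*\cE\simeq \bigl(\bigoplus_{k=1}^{d} N_k(L)\bigr)\otimes L^{-1}$. Since $L\in\Ker N$, we have a $G$-equivariant trivialization $N_d(L)\simeq \cO_X$, so $N_d(L)\simeq N_0(L)$. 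Hence $\sigma^*\cE\simeq \cE\otimes L^{-1}$ via the cyclic shift of summands, using the trivialization on the last summand. This gives an isomorphism $\P(\sigma^*\cE)\simeq \P(\cE)$, hence a lift $\tilde\sigma$ of $\sigma$ to $\P_X(\cE)$.

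It remains to check that $\tilde\sigma^d=\id$. Iterating $d$ times, the composite $(\sigma^{d})^*\cE=\cE\to \cE\otimes (N_d(L))^{-1}\simeq\cE$ acts on each summand $N_k(L)$ by a map that is built from the chosen trivialization $\tau\colon N_d(L)\to \cO_X$ and its $\sigma$-translates. Because the trivialization is $G$-linearized, meaning it is compatible with the natural cyclic-permutation linearization of $N_d(L)$, all these factors agree. The composite is therefore multiplication by a single nowhere-vanishing function on $X$, hence a constant $c\in\C^*$, since $X$ is compact and connected. The same constant appears on every summand, so $\tilde\sigma^d$ is the identity on $\P_X(\cE)$. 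Rescaling $\phi$ by $c^{-1/d}$ even makes the lifted linear action honest, though only the projective statement is needed.

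The main obstacle is this bookkeeping: verifying that the scalars on different summands coincide. I expect it to reduce to the fact that the norm linearization on $N_d(L)$ is given by cyclic permutation of tensor factors, so that the trivialization $\tau$ satisfies $\sigma^*\tau=\tau$ under the identification $\sigma^*N_d(L)\simeq N_d(L)$. To make this precise I would write the identifications $\sigma^*N_k(L)\otimes L\simeq N_{k+1}(L)$ explicitly as reorderings of tensor factors and track them around the cycle of summands. Finally, $P_L$ need not be the projectivization of a vector bundle on $T$, and this is precisely what yields nontrivial Brauer classes later; no such claim is needed here.
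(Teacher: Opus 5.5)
Your proposal is correct and is essentially the paper's proof. The paper uses exactly your cyclic-shift isomorphism $\phi_1\colon \cE\to L\otimes\sigma^*\cE$, with the $G$-equivariant $h\colon \cO_X\to N_d(L)$ on the first summand, to build a $G$-linearization of $\P_X(\cE)$, and then descends by citing SGA1 rather than forming the free quotient as you do. The bookkeeping you defer is carried out there by \cref{lemmaSec}, which is precisely your observation that compatibility of $h$ with the cyclic-permutation linearization turns each twisted copy of $h$ into $h\otimes\id$, together with an induction showing that the $d$-fold composite is $h\otimes\id_{\cE}$, so your constant $c$ is in fact $1$.
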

Before proving the proposition, we discuss some properties of lines bundles in the kernel of $N$. By definition of $\Pic_G(X)$, a line bundle $L\in \Pic(X)$ lies in $\ker(N)$ if and only if  there exists a $G$-equivariant isomorphism 
\begin{equation}\label{eqn:morphH}
    h\colon \cO_X\lra N_d(L)
\end{equation}
such that the following diagram is commutative
    \begin{equation}\label{eqn:TrivialLin}
    \begin{tikzcd}
    \cO_X\ar[r,"h"]\ar[d, "\sigma^*"'] & N_d(L)\ar[d, "\eta"]\\
    \sigma^*\cO_X\ar[r, "\sigma^*h"'] & \sigma^*N_d(L),
\end{tikzcd}
\end{equation}
where $\eta$ is the natural $G$-linearization on $N_d(L)$.
The following lemma will be useful for the proof.
\begin{lem}\label[lemma]{lemmaSec}
Let $L$ be a line bundle in $\ker(N)$ and consider the identification $L \otimes \sigma^*N_d=N_d\otimes L$ given by the equality $(\sigma^d)^* = \id$.
    The morphism
\begin{equation}\label{eqn:varphi}
        \psi\colon L\xrightarrow{\id_L\otimes h} L\otimes N_d \xrightarrow{\id_L\otimes\eta}L \otimes \sigma^*N_d=N_d\otimes L    
\end{equation}
is equal to the morphism $h\otimes \id_L$ 
\end{lem}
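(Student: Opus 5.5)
The plan is to reduce the identity $\psi = h\otimes \id_L$ to the $G$-equivariance of $h$ expressed by diagram~\eqref{eqn:TrivialLin}, after checking carefully what the identification $L\otimes\sigma^*N_d = N_d\otimes L$ means once $L$ is cancelled. Since both sides are morphisms $L\to N_d\otimes L$ of line bundles, I will tensor everything with $L^\vee$ and compare the resulting sections of $N_d$; equivalently, I will compare the maps $\cO_X\to N_d$ obtained from $\psi$ and from $h\otimes\id_L$ via $L^\vee\otimes L\simeq\cO_X$. The map $h\otimes \id_L$ corresponds to $h$ itself.

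\textbf{Step 1 (the regrouping map).} First I make the identification explicit. Using $(\sigma^d)^*=\id$ we have
\[
\sigma^*N_d(L)=\sigma^*L\otimes\cdots\otimes(\sigma^{d-1})^*L\otimes L .
\]
Hence
\[
L\otimes\sigma^*N_d = L\otimes\sigma^*L\otimes\cdots\otimes(\sigma^{d-1})^*L\otimes L = N_d\otimes L,
\]
which is a pure rebracketing of an ordered tensor product. I will then compute the isomorphism $\tau\colon\sigma^*N_d\to N_d$ that this rebracketing induces after cancelling one copy of $L$. On the left the cancelled copy is the first factor; on the right it is the last factor. Cancelling therefore amounts to moving the last factor $L$ of $\sigma^*N_d$ to the front, which is the cyclic permutation of elementary tensors. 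By the definition of the natural linearization $\eta$, this is exactly $\tau=\eta^{-1}$. Since line bundles commute without signs, no sign issues arise.

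\textbf{Step 2 (rewriting $\psi$).} Under the cancellation of Step~1, $\psi$ corresponds to the section
\[
\tau\circ\eta\circ h=\eta^{-1}\circ\eta\circ h=h
\]
of $N_d$. This gives $\psi=h\otimes\id_L$. Equivalently, I could phrase the argument via diagram~\eqref{eqn:TrivialLin}: it gives $\eta\circ h=\sigma^*h$, and Step~1 says that $\sigma^*h$, read in $N_d$ through the cyclic identification, is again $h$. I will present whichever of the two formulations is cleaner.

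\textbf{Main obstacle.} The only delicate point is Step~1. One must verify on elementary tensors in local trivializations that the bookkeeping isomorphism $L\otimes\sigma^*N_d\cong N_d\otimes L$, once $L$ is cancelled on the correct side, is the inverse of the cyclic permutation defining $\eta$, and not $\eta$ or some other power. I will check this on local generators: write $s_0\otimes s_1\otimes\cdots\otimes s_{d-1}$ with $s_i$ a local frame of $(\sigma^i)^*L$, and track where each factor goes. Once $\tau=\eta^{-1}$ is established, the rest is the formal manipulation of Step~2.
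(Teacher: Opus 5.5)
Your proposal is correct and is essentially the paper's proof. The paper writes $h(1)=u\,s_0\otimes\cdots\otimes s_{d-1}$ in local frames and reads off $\psi(s_0)=s_0\otimes\eta(h(1))=u\,s_0\otimes s_1\otimes\cdots\otimes s_{d-1}\otimes s_0=h(1)\otimes s_0$; this is exactly your observation that the rebracketing identification undoes the cyclic permutation $\eta$ (your $\tau=\eta^{-1}$), after which $\tau\circ\eta\circ h=h$.

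The equivariance diagram~\eqref{eqn:TrivialLin} plays no role: as both this computation and your first formulation show, the identity holds for any morphism $h\colon\cO_X\to N_d$. So your opening framing, and the alternative formulation via $\eta\circ h=\sigma^*h$, are unnecessary. Moreover, the claim that ``$\sigma^*h$ read through the cyclic identification is $h$'' does not follow from Step~1; it is the equivariance itself, so that alternative formulation is circular.
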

\begin{proof}
Since the statement is local on $X$, we can restrict to an open subset $U$ of $X$ on which the line bundles $L, \sigma^*L, \cdots, (\sigma^{d-1})^*L$ are trivial. Given trivializing section $s_i$ of $(\sigma^i)^*L\vert_U$,  
    the section $t=s_0\otimes s_1\otimes\cdots\otimes s_{d-1}$ is a trivializing section of $N_d(L)$. Therefore, we can write $h(1) = ut$ for some $u\in\cO_X(U)^*$, and
    $$
        \eta(h(1)) = u\,\eta(t) = u\,s_1\otimes\cdots \otimes s_{d-1}\otimes s_0,
    $$
    where we used that $\eta$ is given by the cyclic permutation of the factors of $N_d(L)$.

    Now compute the morphism $\psi$ on the local section \(s_0\in L(U)\):
    $$
        s_0\longmapsto s_0\otimes \eta(h(1)) = u\,s_0\otimes s_1\otimes\cdots \otimes s_{d-1}\otimes s_0 = u\,t \otimes s_0,
    $$  
    which is equal to $h(1)\otimes s = (h\otimes\id_L)(s)$.
\end{proof}

\begin{proof}[Proof of \cref{GLinBastaaaaaa}]
        Since the morphism $X\to T$ is induced by the action of the group $G =\langle \sigma\rangle$, 
    by  descent theory (\cite[Exposé VIII, Corollaire 7.6]{SGA1}), a morphism $p\colon Q\to X$ descends to $T$ if and only if $p\colon Q\to X$ admits a $G$-linearization, which is a morphism ${\Phi\colon Q\to \sigma^*Q}$ such that the diagram
    \[\begin{tikzcd}
	{Q} & {\sigma^*Q} \\
	    & X
	\arrow["\Phi", from=1-1, to=1-2]
	\arrow["\sigma^*p", from=1-2, to=2-2]
	\arrow["p"', from=1-1, to=2-2]
\end{tikzcd}\]
is commutative, and the composition
$$
(\sigma^{d-1})^*\Phi\circ \cdots \sigma^*\Phi\circ \Phi\colon Q\lra (\sigma^d)^*Q =Q
$$
is the identity.

We construct a $G$-linearization $\Phi\colon \P_X(\cE) \to \sigma^*\P_X(\cE)$. Note that we have isomorphisms of projective bundles over $X$
$$
\sigma^*\P_X(\cE)\simeq \P_X(\sigma^*\cE) \simeq \P_X(L\otimes \sigma^*\cE),
$$
where the second isomorphism is the canonical identification induced by the line bundle $L$ (defined locally by multiplying for a trivializing section of $L$).

Note that for all \(k\in \{1,\dots d-1\}\), we have the equality
\begin{equation}\label{eq_partial_norms_translation}
    L\otimes \sigma^* N_k(L)=N_{k+1}(L).
\end{equation}
Hence, the vector bundle $L\otimes \sigma^*\cE$ is equal to
\begin{equation}\label{eqn:Lsigma^*E}
    L\otimes \sigma^*\cE = (L\otimes \sigma^*\cO_X) \oplus N_2(L)\oplus \cdots \oplus N_{d-1}(L)\oplus N_d(L).
\end{equation}

Consider the morphism of vector bundles $\phi_1\colon \cE \to L\otimes \sigma^*\cE$ defined as
\[\begin{tikzcd}[column sep = small]
	\cO_X & \oplus & {L} & \oplus & {N_2} & \oplus & \cdots & \oplus & {N_{d-2}} & \oplus & {N_{d-1}} \\
	\\
    \\
	{L\otimes \sigma^*\cO_X} & \oplus & {N_2} & \oplus & {N_3} & \oplus & \cdots & \oplus & {N_{d-1}} & \oplus & {N_{d}},
	\arrow["h",color=red, from=1-1, to=4-11, in=140, out=320]
	\arrow["\id_L\otimes \sigma^*"'{pos=0.6},color=myblue, from=1-3, to=4-1]
	\arrow["\id_{N_2}"'{pos=0.3}, from=1-5, to=4-3]
	\arrow["\id_{N_{d-1}}"'{pos=0.3}, from=1-11, to=4-9]
\end{tikzcd}\]
    where $h$ is the $G$-isomorphism defined in \cref{eqn:morphH}, and we wrote $N_i$ for the line bundle $N_i(L)$.
    The morphism $\phi_1$ is an isomorphism of vector bundles over $X$, therefore it induces a morphism of projective bundles $\P(\phi_1)\colon \P_X(\cE)\to\P(L\otimes \sigma^*\cE)$ over $X$.
    Finally, we set $\Phi$ the morphism
    $$
        \Phi\colon \P_X(\cE)\xrightarrow{\P(\phi_1)}\P(L\otimes \sigma^*\cE)= \P(\sigma^*\cE).
    $$
    We show that $\Phi$ is a $G$-linearization, namely that the composition 
    $$
        (\sigma^{d-1})^*\Phi\circ \cdots \sigma^*\Phi \circ \Phi
    $$
    is the identity. Denote by $\Phi_k$ the partial composition 
    $$(\sigma^{k-1})^*\Phi\circ \cdots \sigma^*\Phi \circ \Phi\colon \P_X(\cE)\lra \P((\sigma^{k})^*\cE).
    $$
    Using \cref{eq_partial_norms_translation} recursively, and the fact that $\sigma^d=\id$ we can write
    \begin{multline}\label{eqn:Nksigma^k^*E}
    N_k(L)\otimes (\sigma^k)^*\cE = (N_k(L)\otimes (\sigma^k)^*\cO_X) \oplus N_{k+1}(L)\oplus \cdots \oplus N_d(L)\\ \oplus (N_d(L)\otimes N_1(L))
    \oplus \cdots \oplus(N_d(L)\otimes N_{k-1}(L))
\end{multline}

    We claim that 
    the morphism $\Phi_k$ factors as
    \begin{equation}\label{ind_hyp}
        \Phi_k\colon \P_X(\cE) \xrightarrow{\P(\phi_k)} \P(N_k(L)\otimes (\sigma^k)^*\cE)\simeq \P((\sigma^k)^*\cE),
    \end{equation}
    where $\phi_k$ is the isomorphism of vector bundles defined as
\begin{equation}\label{eqn:defphi_k}
\begin{tikzcd}[column sep =0mm]
	\cO_X & \oplus & {L} & \oplus & \cdots & \oplus & {N_{k-1}} & \oplus & N_k & \oplus & \cdots & \oplus & {N_{d-1}} \\
	\\
	\\
    {N_k\otimes (\sigma^k)^{*}\cO_X} & \oplus & \cdots & \oplus & {N_{d-1}} & {\oplus } & {N_d} & \oplus & {N_d\otimes L} & \oplus & \cdots & \oplus & {N_{d}\otimes N_{k-1}}.
	\arrow["h"{pos=0.1}, color=red, from=1-1, to=4-7, out = 310, in = 130]
	\arrow["h\otimes \id_L"{pos=0.2}, color=purple, from=1-3, to=4-9,  out = 310, in = 130]
    \arrow["h\otimes \id_{N_{k-1}}"{pos=0.2}, color=purple, from=1-7, to=4-13,  out = 310, in = 130]
	\arrow["\id_{N_k}\otimes (\sigma^k)^*"'{pos=0.8}, color=myblue, from=1-9, to=4-1]
	\arrow["\id_{N_{d-1}}"{pos=0.2}, from=1-13, to=4-5]
\end{tikzcd}
\end{equation}
Once the claim is proved, taking $k=d$ yields the wanted result. Indeed, this proves that the composition $\Phi_d$ is equal to the composition
$$
\P_X(\cE)\xrightarrow{\P(\phi_d)}\P(N_d(L)\otimes \cE)\simeq\P_X(\cE),
$$
where the morphism $\phi_d$ is the isomorphism of vector bundles $h\otimes \id_\cE$. This shows in particular that $\Phi_d =\id_{\P_X(\cE)}$ and $\Phi$ is a $G$-linearization of $\P_X(\cE)$. By the above discussion, this proves that $\P_X(\cE)\to X$ is the pullback of a Brauer-Severi variety $P_L\to T$.\\

We now prove the claim by induction on $k$. 
The case $k=1$ follows by definition of $\Phi$. Consider the morphism $\Phi_{k+1}$, and suppose that the factorisation~\eqref{ind_hyp} is true for the morphism $\Phi_k$. Note that there is equality
$$
\Phi_{k+1} = \sigma^*\left((\sigma^{k-1})^*\Phi\circ \cdots \circ \Phi\right)\circ \Phi = \sigma^*\Phi_k \circ \Phi,
$$
inducing a factorisation
\begin{equation}\label{bigDiagPhi_k}
    \begin{tikzcd}
    \P_X(\cE) \ar[rr, "\Phi"]\ar[rrd, "\P(\phi_1)"'] && \P(\sigma^*\cE)  \ar[r, "\sigma^*\Phi_k"]\ar[rd, "\P(\sigma^*\phi_k)"'] & \P\left(\sigma^*((\sigma^k)^*\cE)\right)\\
    && \P(L\otimes \sigma^*\cE)\ar[u, "\vsim"']\ar[rd, "\P(\id_L\otimes \sigma^*\phi_k)"'] & \P\left(\sigma^*(N_k(L)\otimes (\sigma^k)^*\cE)\right)\ar[u, "\vsim"']\\
    && & \P\left(L\otimes \sigma^*(N_k(L)\otimes (\sigma^k)^*\cE)\right)\ar[u, "\vsim"'],
\end{tikzcd}
\end{equation}
where we used factorisation~\eqref{ind_hyp} for the morphism $\Phi_k$, and the vertical isomorphisms are induced by locally multiplying for a section of the line bundle $L$.
We prove that the composition 
\begin{equation*}
    \cE\xrightarrow{\phi_1}L\otimes \sigma^*\cE\xrightarrow{\id_L\otimes \sigma^*\phi_k}L\otimes \sigma^*(N_k(L)\otimes (\sigma^k)^*\cE)
\end{equation*}
is equal to the morphism $\phi_{k+1}$.

The vector bundle $L\otimes \sigma^*(N_k(L)\otimes (\sigma^k)^*\cE)$ decomposes as
\begin{align*}
    L\otimes \sigma^*(N_k\otimes (\sigma^k)^*\cE)&= (L \otimes \sigma^*N_k\otimes (\sigma^{k+1})^*\cO_X)\oplus\bigoplus_{i=k+1}^{d-1} L\otimes \sigma^*N_i \oplus \bigoplus_{i=0}^{k-1} L\otimes \sigma^*N_d\otimes \sigma^*N_i\\
    &=N_{k+1}\otimes (\sigma^{k+1})^*\cO_X \oplus \bigoplus_{i=k+2}^d N_i \oplus \bigoplus_{i=1}^k  N_d\otimes N_i
\end{align*}

By definition of $\phi_k$ (see~\eqref{eqn:defphi_k}), for $0<i<k$  the restriction of $\id_L\otimes \sigma^*\phi_k$ to the term $N_{i+1}(L) = L\otimes\sigma^*(N_i(L))$ of $L\otimes \sigma^*\cE$ is equal to 
$$
\id_L\otimes \sigma^*(h\otimes\id_{N_i(L)})\colon L \otimes \sigma^*(\cO_X\otimes N_i(L)) \lra L \otimes \sigma^*N_d\otimes \sigma^*(N_i(L))
$$ 
Using the identification $\id_L\otimes \sigma^*\colon L\to L\otimes \sigma^*\cO_X$, and commutativity of diagram~\eqref{eqn:TrivialLin}, the above map is given by
$$
L \otimes \sigma^*N_i\xrightarrow{\psi\otimes \id_{N_i}} L \otimes \sigma^*N_d \otimes N_i = N_d\otimes N_{i+1},
$$
where $\psi$ was defined in~\eqref{eqn:varphi}. By \cref{lemmaSec}, this is exactly the morphism $h\otimes \id_{N_{i+1}}$.

Therefore, we can write down the composition $\id_L\otimes \sigma^*\phi_k\circ \phi_1$ as
$$
\begin{tikzcd}[column sep =0mm]
    \cO_X &\oplus & L &\oplus &  N_2 & \oplus &  \cdots  &\oplus & N_{k} & \oplus & N_{k+1} & \oplus & \cdots  \,\oplus\, {N_{d-1}}\\
    \\
    \\
	L\otimes \sigma^*\cO_X & \oplus & {N_2} & \oplus & \cdots & \oplus & {N_{k}} & \oplus & N_{k+1} & \oplus & \cdots & \oplus & {N_{d}} \\
	\\
	\\
    {N_{k+1}\otimes (\sigma^{k+1})^{*}\cO_X} & \oplus & \cdots & \oplus & {N_{d}} & {\oplus } & {L\otimes \sigma^*N_d} & \oplus & {N_d\otimes N_2} & \oplus & \cdots & \oplus & {N_{d}\otimes N_k}.
	\arrow["h",color=red, from=1-1, to=4-13, in=140, out=320]
	\arrow["\id_L\otimes \sigma^*"'{pos=0.6},color=myblue, from=1-3, to=4-1]
	\arrow["\id_{N_2}"'{pos=0.3}, from=1-5, to=4-3]
    \arrow["\id_{N_{k}}"'{pos=0.3}, from=1-9, to=4-7]
	\arrow["\id_{N_{k+1}}"'{pos=0.3}, from=1-11, to=4-9]
	\arrow["\id_L\otimes\sigma^*h"{pos=0.1}, color=red, from=4-1, to=7-7, out = 310, in = 130]
	\arrow["h\otimes \id_{N_2}"{pos=0.2}, color=purple, from=4-3, to=7-9,  out = 310, in = 130]
    \arrow["h\otimes \id_{N_{k}}"{pos=0.2}, color=purple, from=4-7, to=7-13,  out = 310, in = 130]
	\arrow["\id_{N_{k+1}}\otimes (\sigma^{k+1})^*"'{pos=0.8}, color=myblue, from=4-9, to=7-1]
	\arrow["\id_{N_{d}}"{pos=0.2}, from=4-13, to=7-5]
\end{tikzcd}
$$

Using again \cref{lemmaSec} and commutativity of diagram~\eqref{eqn:TrivialLin}, we obtain that the morphism 
$$
L \xrightarrow{\id_L\otimes \sigma^*} L\otimes \sigma^*\cO_X \xrightarrow{\id_L\otimes \sigma^*h} L\otimes \sigma^*N_d = N_d\otimes L
$$
is the morphism $h\otimes \id_L$. This shows that the composition  $\id_L\otimes \sigma^*\phi_k\circ \phi_1$ is exactly the morphism $\phi_{k+1}$ given by~\eqref{ind_hyp}, and concludes the proof.
\end{proof}

The following result completes the geometrical description of $\ker(\pi^*_{\Br})$.  

\begin{prop}\label[prop]{charactSBV}
    Let \(L\in \Pic(X)\) contained in \(\ker(N)\). In the above notation, we have that
    \[P_L \simeq \Proj(\cF) \textrm{ for some   rank d locally free sheaf } \cF \iff c_1(L)\in (1-\sigma^*)(H^2(X,\Z))\]
\end{prop}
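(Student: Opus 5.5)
The plan is to prove the two implications separately. In both directions the key idea is to compare the projective $G$-linearization $\Phi$ built in the proof of \cref{GLinBastaaaaaa} with a genuine $G$-linearization of a vector bundle. Throughout I use that $h^{1,0}(X)=0$, so $\Pic(X)\simeq \NS(X)$ and line bundles are determined by their Chern classes. I also use that any two choices of the isomorphism $h$ in \eqref{eqn:morphH} differ by a nonzero constant, since $X$ is compact and connected.

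\emph{($\Leftarrow$).} Suppose $c_1(L)=(1-\sigma^*)\alpha$ with $\alpha\in H^2(X,\Z)$. The first step is to reduce to $\alpha\in \NS(X)$, i.e.\ to show that $(1-\sigma^*)H^2(X,\Z)\cap \NS(X)=(1-\sigma^*)\NS(X)$. I expect this to follow from the Hodge decomposition: $H^{2,0}(X)$ has no $\sigma^*$-invariants because $h^{2,0}(T)=0$, so $1-\sigma^*$ is invertible on $H^{2,0}(X)$. If it does not follow directly, I would use the comparison with \cite[Proposition 4.1]{Bea09-Brauer_of_Eriques_surf}. Granting this, write $L\simeq M\otimes\sigma^*M^\vee$ for some $M\in\Pic(X)$. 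Then
\[
N_k(L)\simeq M\otimes(\sigma^k)^*M^\vee,
\]
and therefore $\cE\simeq M\otimes\bigoplus_{k=0}^{d-1}(\sigma^k)^*M^\vee=M\otimes\pi^*\pi_*M^\vee$. The bundle $\pi^*\pi_*M^\vee$ carries its natural $G$-linearization, which cyclically permutes the summands. I will check directly on the summands that, for a suitable normalization of $h$ (canonical up to a scalar), the projectivization of this linearization coincides with $\Phi$. It then follows from descent (\cite[Exposé VIII, Corollaire 7.6]{SGA1}) that $P_L\simeq\Proj(\cF)$ with $\cF=\pi_*M^\vee$, which is locally free of rank $d$.

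\emph{($\Rightarrow$).} Suppose $P_L\simeq\Proj(\cF)$. Pulling back gives $\P_X(\cE)\simeq\P_X(\pi^*\cF)$ over $X$, so $\pi^*\cF\simeq\cE\otimes M$ for some line bundle $M$. Under this isomorphism, the genuine $G$-linearization of $\pi^*\cF$ induces $\Phi$ projectively. The projective isomorphism $\Phi$ therefore lifts to a vector bundle isomorphism
\[
\cE\otimes M\to\sigma^*(\cE\otimes M),
\]
which agrees with $\phi_1\otimes\mu$ up to multiplication by a nowhere vanishing function, i.e.\ a nonzero constant, for some isomorphism $\mu\colon M\to L^\vee\otimes\sigma^*M$. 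The point is that lifts of a fixed projective isomorphism differ by a line bundle twist, and here the twist is forced to be $L^\vee$ because $\phi_1$ lands in $L\otimes\sigma^*\cE$. In particular $\sigma^*M\simeq M\otimes L$, so $c_1(L)=(\sigma^*-1)c_1(M)$, which lies in $(1-\sigma^*)H^2(X,\Z)$.

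The main obstacle is this lifting step: showing rigorously that the genuine linearization of $\pi^*\cF$ and the twisted map $\phi_1$ can differ only by a line bundle twist combined with a scalar. Concretely, an automorphism of $\P_X(\cE)$ over $X$ lifts to an automorphism of $\cE$ up to a line bundle twist, and two lifts of the same projective map differ by a global unit of $\cO_X$, which is a constant. I would make this precise using the Euler sequence, or $p_*\cO_{\P(\cE)}(1)$, to identify $\cE$ and $\pi^*\cF$ up to twist, compatibly with the two $G$-actions. I would also keep in reserve the alternative of identifying the Brauer class of $P_L$ with the image of $[c_1(L)]$ under the isomorphism of \cite[Proposition 4.1]{Bea09-Brauer_of_Eriques_surf}, in case the direct argument becomes unwieldy.
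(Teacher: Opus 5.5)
Your proposal is correct. The direction ($\Leftarrow$) follows the paper's argument: write $L\simeq M\otimes\sigma^*M^\vee$, identify $\cE\simeq M\otimes\pi^*\pi_*M^\vee$, and descend. You are more careful than the paper on two points it treats briefly, and both of your planned checks go through.

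First, the Hodge step you hedge on holds as stated. If $(1-\sigma^*)\alpha$ has type $(1,1)$, then $(1-\sigma^*)\alpha^{2,0}=0$. Since $H^{2,0}(X)^{\sigma^*}=H^{2,0}(T)=0$, this forces $\alpha^{2,0}=0$, so $\alpha\in\NS(X)$.

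Second, for the comparison of linearizations: rescaling $h$ changes $\Phi$ only by conjugation with a constant diagonal automorphism of $\cE$. So the isomorphism class of $P_L$ does not depend on the normalization. Equivalently, you can rescale $L\simeq M\otimes\sigma^*M^\vee$ by a $d$-th root of the required constant. With the telescoped choice of $h$, the projectivized canonical linearization of $M\otimes\pi^*\pi_*M^\vee$ matches $\Phi$ exactly.

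The direction ($\Rightarrow$) is where you genuinely differ from the paper. The paper never uses the descent datum. From $\pi^*\cF\simeq\cE\otimes M$ and $\sigma^*\pi^*\cF\simeq\pi^*\cF$ it gets $\cE\otimes M\simeq\sigma^*\cE\otimes\sigma^*M\simeq\cE\otimes L^\vee\otimes\sigma^*M$, using \eqref{eq_partial_norms_translation} and $N_d(L)\simeq\cO_X$. Taking determinants gives $M^{\otimes d}\simeq(L^\vee\otimes\sigma^*M)^{\otimes d}$, and the paper concludes because $H^2(X,\Z)$ is torsion free. This works purely with isomorphism classes, so it sidesteps the lifting step you flagged as the main obstacle.

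Your route instead uses that the isomorphism $P_L\simeq\Proj(\cF)$ over $T$, pulled back to $X$, intertwines $\Phi$ with the projectivized canonical linearization of $\pi^*\cF$. You combine this with the standard fact that an isomorphism $V\otimes A\to V\otimes B$ inducing the identity on $\P_X(V)$ equals $\id_V\otimes\mu$ for some isomorphism $\mu\colon A\to B$. Your extra ``nowhere vanishing function'' is simply absorbed into $\mu$. This costs more bookkeeping, but it yields an honest isomorphism $\sigma^*M\simeq M\otimes L$ without dividing by $d$. So you never need $H^2(X,\Z)$ to be torsion free. Your argument therefore also covers the more general finite \'etale cyclic covers with $h^{2,0}(T)=h^{1,0}(X)=0$ allowed at the start of the section, where the determinant argument only gives the conclusion up to $d$-torsion.
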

\begin{proof}
Suppose \(c_1(L)\in (1-\sigma^*)(H^2(X,\Z))\). Since \(1-\sigma^*\) is a morphism of Hodge structures and \(H^1(X,\cO_X)=0\), this is equivalent to the fact there exists \(M\in \Pic(X)\) such that 
\[L\simeq M\otimes \sigma^* M^\vee .\]

Since \(N_k(M\otimes \sigma^*M^\vee)\simeq M\otimes  (\sigma^*)^{k} M^\vee\),  we deduce that 
\[ M^\vee \otimes \cE_L\simeq   M^\vee \oplus \sigma^* M^\vee \oplus  (\sigma^*)^2M^\vee \oplus \cdots \oplus (\sigma^*)^{d-1}M^\vee \simeq \pi^* \pi_* M^\vee ,\]

where the last isomorphism can be easily checked in the analytic category, by GAGA. Ultimately, we have \(\sigma\)-equivariant isomorphisms
$$
\Proj_X(\cE_L)\simeq \Proj_X(\pi^*\pi_* M)\simeq \Proj_T(\pi_* M)\times_T X.
$$

Since the quotient by \(\langle \phi\rangle\) of left and right-hand terms respectively are \(P_L\) and \(\Proj_T(\pi_*M)\), by uniqueness of categorical quotient we have the claim with \(\cF\coloneqq  \pi_*M\).

Conversely, suppose there exists some locally free sheaf \(\cF\) of rank \(d\) on \(T\) such that \(P_L\simeq \Proj_T(\cF)\). Then
\(\Proj_X(\cE_L)\simeq \Proj_X(\pi^*\cF)\), hence there exists a line bundle \(M\) on \(X\) such that
\[\pi^*\cF \simeq \cE_L\otimes M.\]

Clearly \(\pi^*\cF\simeq \sigma^*\pi^* \cF\), which implies the first isomorphism in
\[\cE_L \otimes M\simeq \sigma^* \cE_L \otimes \sigma^*M\simeq (\sigma^* \cE_L \otimes L)\otimes (L^\vee \otimes \sigma^*M).\]
By \eqref{eq_partial_norms_translation}, \(\cE_L\simeq \sigma^*\cE_L \otimes L\) and hence the above yields
\[\cE_L\otimes M \simeq \cE_L\otimes (L^\vee \otimes \sigma^*M).\]
Taking the determinant on both sides we get \(M^{\otimes d}\simeq (L^\vee \otimes \sigma^* M)^{\otimes d}\), and conclude that
\[c_1(L)= (1-\sigma^*)(c_1(M^\vee))\]
because \(H^2(X,\Z)\) is torsion free.
\end{proof}

\bibliographystyle{amsalpha-abbrv}
\bibliography{biblio}

@article{Dold,
 author = {Dold, Albrecht},
 title = {Decomposition theorems for {{\(S(n)\)}}-complexes},
 fjournal = {Annals of Mathematics. Second Series},
 journal = {Ann. Math. (2)},
 issn = {0003-486X},
 volume = {75},
 pages = {8--16},
 year = {1962},
 doi = {10.2307/1970415},
 zbMATH = {3202602},
 Zbl = {0125.01201}
}

@article{BNWS10-DefEnriques,
 author = {Boissi{\`e}re, Samuel and Nieper-Wi{\ss}kirchen, Marc and Sarti, Alessandra},
 title = {Higher dimensional {Enriques} varieties and automorphisms of generalized {Kummer} varieties},
 fjournal = {Journal de Math{\'e}matiques Pures et Appliqu{\'e}es. Neuvi{\`e}me S{\'e}rie},
 journal = {J. Math. Pures Appl. (9)},
 issn = {0021-7824},
 volume = {95},
 number = {5},
 pages = {553--563},
 year = {2011},
 doi = {10.1016/j.matpur.2010.12.003},
 zbMATH = {5888512},
 Zbl = {1215.14046}
}

@article{Bea09-Brauer_of_Eriques_surf,
 author = {Beauville, Arnaud},
 title = {On the {Brauer} group of {Enriques} surfaces},
 fjournal = {Mathematical Research Letters},
 journal = {Math. Res. Lett.},
 issn = {1073-2780},
 volume = {16},
 number = {5-6},
 pages = {927--934},
 year = {2009},
 doi = {10.4310/MRL.2009.v16.n6.a1},
 zbMATH = {5707692},
 Zbl = {1195.14053}
}

@article{OguisoSchroer2011,
 author = {Oguiso, Keiji and Schr{\"o}er, Stefan},
 title = {Enriques manifolds},
 fjournal = {Journal f{\"u}r die Reine und Angewandte Mathematik},
 journal = {J. Reine Angew. Math.},
 issn = {0075-4102},
 volume = {661},
 pages = {215--235},
 year = {2011},
 doi = {10.1515/CRELLE.2011.077},
 zbMATH = {5995790},
 Zbl = {1272.14026}
}

@article{Oguiso-Schroer_Periods,
 author = {Oguiso, Keiji and Schr{\"o}er, Stefan},
 title = {Periods of {Enriques} manifolds},
 fjournal = {Pure and Applied Mathematics Quarterly},
 journal = {Pure Appl. Math. Q.},
 issn = {1558-8599},
 volume = {7},
 number = {4},
 pages = {1631--1656},
 year = {2011},
 doi = {10.4310/PAMQ.2011.v7.n4.a25},
 zbMATH = {6107792},
 Zbl = {1316.32011}
}

@book{Weibel-intro_to_hom_algebra,
 author = {Weibel, Charles A.},
 title = {An introduction to homological algebra},
 fseries = {Cambridge Studies in Advanced Mathematics},
 series = {Camb. Stud. Adv. Math.},
 volume = {38},
 isbn = {0-521-43500-5},
 year = {1994},
 publisher = {Cambridge: Cambridge University Press},
 zbMATH = {595200},
 Zbl = {0797.18001}
}

@article{KapferMenet2018,
 author = {Kapfer, Simon and Menet, Gr{\'e}goire},
 title = {Integral cohomology of the generalized {Kummer} fourfold},
 fjournal = {Algebraic Geometry},
 journal = {Algebr. Geom.},
 issn = {2313-1691},
 volume = {5},
 number = {5},
 pages = {523--567},
 year = {2018},
 doi = {10.14231/AG-2018-014},
 zbMATH = {6999243},
 Zbl = {1423.14243}
}

@book{Voisin2002I,
 author = {Voisin, Claire},
 title = {Hodge theory and complex algebraic geometry. {I}. {Translated} from the {French} by {Leila} {Schneps}},
 fseries = {Cambridge Studies in Advanced Mathematics},
 series = {Camb. Stud. Adv. Math.},
 volume = {76},
 isbn = {0-521-80260-1},
 year = {2002},
 publisher = {Cambridge: Cambridge University Press},
 zbMATH = {1822310},
 Zbl = {1005.14002}
}

@article{Beauville83,
 author = {Beauville, Arnaud},
 title = {Vari{\'e}t{\'e}s k{\"a}hleriennes dont la premi{\`e}re classe de {Chern} est nulle},
 fjournal = {Journal of Differential Geometry},
 journal = {J. Differ. Geom.},
 issn = {0022-040X},
 volume = {18},
 pages = {755--782},
 year = {1983},
 doi = {10.4310/jdg/1214438181},
 zbMATH = {3853948},
 Zbl = {0537.53056}
}

@book{Hatcher,
 author = {Hatcher, Allen},
 title = {Algebraic topology},
 isbn = {0-521-79540-0},
 year = {2002},
 publisher = {Cambridge: Cambridge University Press},
 zbMATH = {2103273},
 Zbl = {1044.55001}
}

@misc{BGGG25,
  title = {Non-Existence of {{Enriques}} Manifolds from {{OG10}} Type Manifolds},
  author = {Billi, Simone and Giovenzana, Franco and Giovenzana, Luca and Grossi, Annalisa},
  year = 2025,
  number = {arXiv:2501.06893},
  eprint = {2501.06893},
  publisher = {arXiv},
  doi = {10.48550/ARXIV.2501.06893},
  url = {https://arxiv.org/abs/2501.06893},
  urldate = {2025-12-30},
  archiveprefix = {arXiv},
  copyright = {Creative Commons Attribution 4.0 International},
  langid = {english}
}

@misc{PacSar23,
 author = {Pacienza, Gianluca and Sarti, Alessandra},
 title = {On the cone conjecture for {Enriques} manifolds},
 year = {2023},
 howpublished = {Preprint, {arXiv}:2303.07095 [math.{AG}] (2023)},
 url = {https://arxiv.org/abs/2303.07095},
 arXiv = {arXiv:2303.07095}
}

@article{Aguilar_Prieto,
 author = {Aguilar, Marcelo A. and Prieto, Carlos},
 title = {Transfers for ramified covering maps in homology and cohomology},
 fjournal = {International Journal of Mathematics and Mathematical Sciences},
 journal = {Int. J. Math. Math. Sci.},
 issn = {0161-1712},
 volume = {2006},
 number = {11},
 pages = {94651, 28},
 year = {2006},
 doi = {10.1155/IJMMS/2006/94651},
 url = {https://eudml.org/doc/53291},
 zbMATH = {5164297},
 Zbl = {1117.55012}
}

@article{Schroer_Brauer_group,
 author = {Schr{\"o}er, Stefan},
 title = {Topological methods for complex-analytic {Brauer} groups},
 fjournal = {Topology},
 journal = {Topology},
 issn = {0040-9383},
 volume = {44},
 number = {5},
 pages = {875--894},
 year = {2005},
 doi = {10.1016/j.top.2005.02.005},
 zbMATH = {2198606},
 Zbl = {1083.14019}
}

@article{DRTX26,
  title = {{{MMP}} for {{Enriques}} Pairs and Singular {{Enriques}} Varieties},
  author = {Denisi, Francesco Antonio and R{\'i}os Ortiz, {\'A}ngel David and Tsakanikas, Nikolaos and Xie, Zhixin},
  year = 2026,
  journal = {J. Ec. polytech. Math.},
  volume = {13},
  pages = {629--686},
  issn = {2429-7100, 2270-518X},
  doi = {10.5802/jep.334},
  url = {https://jep.centre-mersenne.org/articles/10.5802/jep.334/},
  urldate = {2026-04-13},
  langid = {english}
}

@incollection{Brion_notes,
 author = {Brion, Michel},
 title = {Linearization of algebraic group actions},
 booktitle = {Handbook of group actions. Volume IV},
 isbn = {978-1-57146-365-4},
 pages = {291--340},
 year = {2018},
 publisher = {Somerville, MA: International Press; Beijing: Higher Education Press},
 zbMATH = {7034435},
 Zbl = {1410.14038}
}

@article{Martinez,
 author = {Mart{\'{\i}}nez, Hermes},
 title = {The {Brauer} group of {{\(K3\)}} covers},
 fjournal = {Revista Colombiana de Matem{\'a}ticas},
 journal = {Rev. Colomb. Mat.},
 issn = {0034-7426},
 volume = {46},
 number = {2},
 pages = {185--204},
 year = {2012},
 url = {www.scielo.org.co/pdf/rcm/v46n2/v46n2a05.pdf},
 zbMATH = {6643186},
 Zbl = {1350.14029}
}

@book{SGA1,
 author = {Grothendieck, Alexander},
 title = {S{\'e}minaire de g{\'e}om{\'e}trie alg{\'e}brique du {Bois} {Marie} 1960/61 ({SGA} 1), dirig{\'e} par {Alexander} {Grothendieck}. {Augment{\'e}} de deux expos{\'e}s de {M}. {Raynaud}. {Rev{\^e}tements} {\'e}tales et groupe fondamental. {Expos{\'e}s} {I} {\`a} {XIII}. ({Seminar} on algebraic geometry at {Bois} {Marie} 1960/61 ({SGA} 1), directed by {Alexander} {Grothendieck}. {Enlarged} by two reports of {M}. {Raynaud}. {{\`E}tale} coverings and fundamental group)},
 fseries = {Lecture Notes in Mathematics},
 series = {Lect. Notes Math.},
 issn = {0075-8434},
 volume = {224},
 year = {1971},
 publisher = {Springer, Cham},
 language = {French},
 doi = {10.1007/BFb0058656},
 zbMATH = {3370468},
 Zbl = {0234.14002}
}

@misc{Hartlieb_Verni25,
  title = {On the Topological {{Brauer}} Group of Generalized {{Kummer}} Varieties},
  author = {Hartlieb, Moritz and Verni, Matteo},
  year = 2025,
  number = {arXiv:2512.14262},
  eprint = {2512.14262},
  primaryclass = {math},
  publisher = {arXiv},
  doi = {10.48550/arXiv.2512.14262},
  url = {http://arxiv.org/abs/2512.14262},
  urldate = {2026-04-13},
  archiveprefix = {arXiv}
}

@article{Markman2007_integral,
 author = {Markman, Eyal},
 title = {Integral generators for the cohomology ring of moduli spaces of sheaves over {Poisson} surfaces},
 fjournal = {Advances in Mathematics},
 journal = {Adv. Math.},
 issn = {0001-8708},
 volume = {208},
 number = {2},
 pages = {622--646},
 year = {2007},
 doi = {10.1016/j.aim.2006.03.006},
 zbMATH = {5083644},
 Zbl = {1115.14036}
}

@article{Totaro_2020_integral,
 author = {Totaro, Burt},
 title = {The integral cohomology of the {Hilbert} scheme of points on a surface},
 fjournal = {Forum of Mathematics, Sigma},
 journal = {Forum Math. Sigma},
 issn = {2050-5094},
 volume = {8},
 pages = {6},
 year = {2020},
 doi = {10.1017/fms.2020.35},
 zbMATH = {7276276},
 Zbl = {1451.14011}
}

@misc{Nakaoka57,
 author = {Nakaoka, Minoru},
 title = {Cohomology of symmetric products},
 year = {1957},
 language = {English},
 howpublished = {J. {Inst}. {Polytechn}., {Osaka} {City} {Univ}., {Ser}. {A} 8, 121-145 (1957).},
 zbMATH = {3132352},
 Zbl = {0080.38103}
}

@article{Gugnin2012,
 author = {Gugnin, D. V.},
 title = {Topological applications of graded {Frobenius} {{\(n\)}}-homomorphisms. {II}},
 fjournal = {Transactions of the Moscow Mathematical Society},
 journal = {Trans. Mosc. Math. Soc.},
 issn = {0077-1554},
 volume = {2012},
 pages = {167--182},
 year = {2012},
 language = {English},
 doi = {10.1090/S0077-1554-2013-00201-0},
 zbMATH = {6187012},
 Zbl = {1273.13002}
}

@book{Voisin2002II,
 author = {Voisin, Claire},
 title = {Hodge theory and complex algebraic geometry. {II}. {Transl}. from the {French} by {Leila} {Schneps}},
 fseries = {Cambridge Studies in Advanced Mathematics},
 series = {Camb. Stud. Adv. Math.},
 volume = {77},
 isbn = {0-521-80283-0},
 year = {2003},
 publisher = {Cambridge: Cambridge University Press},
 language = {English},
 zbMATH = {1966201},
 Zbl = {1032.14002}
}
\end{document}